\documentclass{article}
\usepackage{amsmath,amsthm,amssymb,amscd}
\usepackage[all,cmtip,color]{xy}
\usepackage[german,english]{babel}
\usepackage{amsmath}
\usepackage{amssymb}
\usepackage{hyperref}
\usepackage{mathrsfs} 
\usepackage{amssymb}
\usepackage{todonotes}
\usepackage{amssymb}
\usepackage[all,cmtip]{xy}
\usepackage{hyperref}
\usepackage[margin = 2.5 cm]{geometry}
\usepackage{colonequals,enumerate}
\usepackage{comment}
\usepackage{bbm}

\usepackage{authblk}
\usepackage{ dsfont }

\usepackage[export]{adjustbox}
\usepackage{graphicx}

\usepackage{enumitem}

\setlist[enumerate,1]{label={(\roman*)}}
\setlist[enumerate,2]{label={(\alph*)}}

\DeclareMathOperator{\Oc}{\mathcal{O}}
\DeclareMathOperator{\Pc}{\mathcal{P}}

\newcommand{\Gb}{\mathbb{G}}

\DeclareMathOperator{\Hom}{\mathsf{Hom}}

\DeclareMathOperator{\reg}{reg}

\DeclareMathOperator{\A}{{\mathsf{A}}}

\DeclareMathOperator{\Pb}{\mathbb{P}}

\renewcommand{\lim}{\mathsf{lim}}

\DeclareMathOperator{\ani}{ani}

\newcommand\toover[1]{\mathrel{\smash{\overset{#1}{\to}}}}
\newcommand\varto[1]{\mathrel{\hbox to #1pt{\rightarrowfill}}}

\DeclareMathOperator{\id}{id}

\newcommand{\Vv}{\mathsf{V}}

\DeclareMathOperator{\Lie}{Lie}

\DeclareMathOperator{\Zb}{\mathbb{Z}}

\DeclareMathOperator{\Mb}{\mathbb{M}}

\DeclareMathOperator{\Gal}{Gal}
\DeclareMathOperator{\Aut}{\mathsf{Aut}}

\DeclareMathOperator{\PGL}{PGL}
\DeclareMathOperator{\SL}{SL}

\DeclareMathOperator{\Ee}{\mathcal{E}}

\DeclareMathOperator{\Tb}{\mathbb{T}}

\newcommand{\Mc}{\mathcal{M}}

\DeclareMathOperator{\Tr}{\mathsf{Tr}}

\DeclareMathOperator{\Spec}{\mathsf{Spec}}

\DeclareMathOperator{\ad}{\mathsf{ad}}

\DeclareMathOperator{\Qb}{\mathbb{Q}}

\newcommand{\BF}{{\mathbb{F}}}
\newcommand{\BG}{{\mathbb{G}}}
\newcommand{\BH}{{\mathbb{H}}}

\newcommand{\BM}{{\mathbb{M}}}

\newcommand{\BT}{{\mathbb{T}}}

\newcommand{\BZ}{{\mathbb{Z}}}

\newcommand{\CE}{{\mathcal E}}

\DeclareMathOperator{\Pic}{\mathsf{Pic}}

\DeclareMathOperator{\et}{\mathsf{et}}

\DeclareMathOperator{\Out}{Out}

\DeclareMathOperator{\Tw}{\mathsf{Tw}}
\renewcommand{\Out}{\mathsf{Out}}
\DeclareMathOperator{\Sect}{\mathsf{Sect}}

\newcommand{\defeq}{\colonequals}
\let\into\hookrightarrow

\theoremstyle{definition}
\newtheorem{definition}{Definition}[section]
\newtheorem{construction}[definition]{Construction}
\newtheorem{rmk}[definition]{Remark}

\theoremstyle{plain}
\newtheorem{theorem}[definition]{Theorem}
\newtheorem{proposition}[definition]{Proposition}
\newtheorem{corollary}[definition]{Corollary}

\newtheorem{lemma}[definition]{Lemma}

\newtheorem{example}[definition]{Example}

\begin{document}

\title{Twisted Higgs bundles and coendoscopy}

\author[1]{Michael Groechenig\thanks{\url{michael.groechenig@utoronto.ca}}}
\author[2]{Xuanyou Li\thanks{\url{lixuanyo21@mails.tsinghua.edu.cn}}}
\author[3]{Dimitri Wyss\thanks{\url{dimitri.wyss@epfl.ch}}}
\author[4]{Paul Ziegler\thanks{\url{paul.ziegler@mathematik.uni-regensburg.de}}}
\affil[1]{University of Toronto}
\affil[2]{Tsinghua University}
\affil[3]{\'Ecole Polytechnique F\'ed\'erale de Lausanne}
\affil[4]{Universit\"at Regensburg}

\renewcommand\Authands{ and }

\maketitle
\abstract{This short note is devoted to the study of $G$-Higgs bundles twisted by a central gerbe. These objects arise naturally in the decomposition of the inertia stacks of $G$-Higgs bundles in terms of coendoscopic data. We establish that stabilised point-counts and cohomology are insensitive to the central twist. Along the way we show an analogue of Ng\^o's product formula for twisted Hitchin fibres.}

\tableofcontents

\section{Introduction}

Let $k$ be a perfect field and $X$ a smooth, projective, geometrically connected curve of genus $g$ over $k$. For $H \to X$ a smooth group scheme with centre $Z=Z(H)$ and $\beta$ a $Z$-gerbe on $X$ we study the notion of a $\beta$-twisted $H$-Higgs bundle on $X$, similar to the notion of sheaves twisted by a gerbe. 

The moduli stacks $\Mb_H^{\beta}$ of such twisted Higgs bundles share many formal properties with their untwisted counterparts and give in particular a new interpretation for the moduli spaces of $\SL_n$-Higgs bundles appearing in the topological mirror symmetry conjecture of Hausel-Thaddeus \cite{HT03}, see Example \ref{slex}.

Furthermore these twisted moduli stack appear naturally in the description of the cyclotomic inertia stack of usual Higgs bundles. For this let $D$ be a fixed line bundle on $X$ of even degree and $G$ a quasi-split reductive group scheme $G\to X$. We write $\Mb_G = \Mb_G(X,D)$ for the moduli stack of $G$-Higgs bundles on $X$ with values in $D$. The open substack $\Mb_G^{ani} \subset \Mb_G$ of anisotropic Higgs bundles is a smooth Deligne-Mumford stack and a central object in Ngô's approach to the Langlands-Shelstad fundamental lemma via the geometric stabilization theorem \cite[Théorème 6.4.1]{Ng10}. 

In the $p$-adic approach to geometric stabilization \cite{GWZ20b}, an important ingredient is the description of the twisted inertia stack $I_{\hat \mu} \Mb_G^{ani} = \Hom(B\hat{\mu}, \Mb_G^{ani})$ which roughly speaking claims an equivalence of groupoids
\begin{equation}\label{weq} I_{\hat \mu} \Mb_G^{ani}(k) \cong \bigsqcup_{\CE} \Mb_{H_\CE}^{ani}(k). \end{equation}

Here $\CE$ ranges over coendoscopic data, see below, and $\Mb_{H_\CE}$ parametrizes Higgs bundles for the coendoscopic group $H_\CE$ associated with $\CE$. Unfortunately, as noticed by the second author, the equivalence \eqref{weq} is incorrect as stated. However, as we will explain below, this is mistake does not negatively affect the overall strategy of the proof of \cite{GWZ20b}.

To obtain a correct version of \eqref{weq}, one has to replace $\Mb_{H_\CE}$ by the moduli stack $\Mb_{\CE}=\Mb_{H_{\CE}^{\beta_{\CE}}}$ of $H_\CE$-Higgs bundles twisted by a certain $Z(H_\CE)$-gerbe.

\begin{theorem}[Theorem \ref{dec-IM}] Over  $\overline{k}$ there is an equivalence of stacks
	\begin{equation*}
		  \bigsqcup_{\mathcal{E}} \widetilde{\mathbb{M}}^{G-\infty,\mathrm{ani}}_{\mathcal{E}} \to I_{\hat\mu}\widetilde{\mathbb{M}}^{\mathrm{ani}}_{G},
	\end{equation*}
	where the disjoint union is taken over isomorphism classes of coendoscopic data $\CE$. 
\end{theorem}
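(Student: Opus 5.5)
The plan is to build the map object by object and then check that it is an equivalence fibrewise over the Hitchin base. Since everything is over $\overline{k}$, it is enough to construct a functor on groupoids of $S$-points that is natural in $S$ and show it is fully faithful and essentially surjective.

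An $S$-point of $I_{\hat\mu}\widetilde{\mathbb{M}}^{\mathrm{ani}}_G$ is a pair $(E,\theta)$ on $X\times S$ together with a homomorphism $\phi\colon \hat\mu \to \Aut(E,\theta)$. By anisotropy, $\Aut(E,\theta)$ is finite unramified over $S$. The map $\phi$ is then determined by a section $\kappa$ of finite order in the centraliser group scheme $J_a$, pulled back along the Hitchin map, with $a$ the image of $(E,\theta)$. Over $\overline{k}$ the decomposition of $\kappa$ by type, namely its conjugacy class in $\check{T}$ up to the action of $\pi_0$ and of the monodromy of the cameral cover, is locally constant. This gives the disjoint union over isomorphism classes of coendoscopic data $\mathcal{E}$, with $H_{\mathcal{E}}$ the identity component of the centraliser of $\kappa$ in $G$, twisted as specified by $\mathcal{E}$.

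\textbf{Direction from $\widetilde{\mathbb{M}}^{G-\infty,\mathrm{ani}}_{\mathcal{E}}$ to the inertia stack.}
1. Start with a $\beta_{\mathcal{E}}$-twisted $H_{\mathcal{E}}$-Higgs bundle $(F,\vartheta)$.
2. Induce it along $H_{\mathcal{E}}\to G$. The gerbe $\beta_{\mathcal{E}}$ is chosen exactly so that its image under $Z(H_{\mathcal{E}})\to G$ is trivialised. This trivialisation comes with the definition of the datum, the "$G-\infty$" condition presumably encoding it. Hence the induced object $F\times^{H_{\mathcal{E}}}G$ is an honest $G$-Higgs bundle $(E,\theta)$.
3. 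The central element $\kappa\in Z(H_{\mathcal{E}})$ of order dividing the $\hat\mu$-level acts on $F$ by automorphisms commuting with $\vartheta$. Transported to $E$, it gives $\phi\colon\hat\mu\to\Aut(E,\theta)$.
4. Anisotropy is preserved under this induction. This requires checking that the $G-\infty$ condition ensures $G$-anisotropy and not only $H_{\mathcal{E}}$-anisotropy.

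\textbf{Inverse direction.}
1. Given $(E,\theta,\phi)$ of type $\mathcal{E}$, let $F\subset E$ be the fixed locus of $\kappa$. Locally $F$ is a torsor under the centraliser $H_{\mathcal{E}}$ of $\kappa$, or its identity component.
2. Globally, $F$ fails to be a torsor exactly up to a $Z(H_{\mathcal{E}})$-gerbe: the gerbe of local reductions of $E$ to $H_{\mathcal{E}}$ compatible with $\kappa$. This gerbe must be identified with $\beta_{\mathcal{E}}$ by the construction of $\beta_{\mathcal{E}}$. The identification is where the failure of the original equivalence \eqref{weq} gets corrected.
3. Because $\theta$ commutes with $\kappa$, it lands in $\mathrm{Lie}\,H_{\mathcal{E}}\otimes D$. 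This produces the twisted Higgs field.

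\textbf{Fully faithfulness.} Automorphisms of $(E,\theta,\phi)$ are automorphisms commuting with $\kappa$. These are the same as automorphisms of the twisted $H_{\mathcal{E}}$-reduction, up to the component group issue. Handling that issue is where quotienting by isomorphisms of coendoscopic data, that is $\Out$ of the datum, enters.

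I expect the main obstacle to be the gerbe bookkeeping: showing that the obstruction gerbe for the $\kappa$-fixed reduction is canonically $\beta_{\mathcal{E}}$, uniformly in $S$, and that disconnected centralisers and nontrivial $\pi_0$ contribute exactly the automorphisms of $\mathcal{E}$ and nothing more. First I would work étale locally on $X\times S$, where $E$ trivialises and $\kappa$ becomes conjugate to a fixed element of $T$. The descent data then give a $Z(H_{\mathcal{E}})$-valued cocycle, and I would compare it directly with the cocycle defining $\beta_{\mathcal{E}}$.
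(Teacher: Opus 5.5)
Your inverse construction (take the locus $F\subset E$ where $\hat\mu$ acts through $\kappa$, and note that $\theta$ then lands in $\mathbf{h}_D$) is also the paper's starting point. However, the forward direction rests on a false premise, and the key identification is missing.

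There is in general no homomorphism $H_\CE\to G$ over $X$ to induce along. $H_\CE$ is the twist of $\BH_\kappa$ by $\rho_\kappa$ through the \emph{pinned} action of $\pi_0(\kappa)$, and the inclusion $\BH_\kappa\subset\BG$ is not equivariant for this action and the action on $\BG$ through $\Out(\BG)$ that defines $G$. Nor does it make sense to push the $Z(H_\CE)$-gerbe $\beta_\CE$ into the non-abelian group $G$. The superscript $G-\infty$ has nothing to do with trivialising $\beta_\CE$: it records base change along $\widetilde{\A}^{\infty,\ani}\to\A$, i.e.\ $G$-anisotropy plus a lift $\tilde\infty\in\mathbf{t}^{\mathrm{rs}}$ at $\infty$.

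What you need is Proposition~\ref{TwistedDesc}. Because $\beta_\CE=[\rho_\kappa/\tilde\pi_0(\kappa)]$, with $\tilde\pi_0(\kappa)$ the stabiliser of the pinning of $\BH_\kappa$, one has $\Tw^{\beta_\CE}_{H_\CE}\cong[\rho_\kappa/(\BG\rtimes\Out(\BG))_\kappa]$ and $[\mathfrak{h}_{\CE,D}/H_\CE]^{\beta_\CE}\cong[\mathbf{h}_D\times\rho_\kappa/(\BG\rtimes\Out(\BG))_\kappa]$. So a $\beta_\CE$-twisted $H_\CE$-Higgs bundle \emph{is} a $(\BG\rtimes\Out(\BG))_\kappa$-torsor with an equivariant map to $\mathbf{h}_D\times\rho_\kappa$. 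The forward map is then induction along the honest inclusion $(\BG\rtimes\Out(\BG))_\kappa\subset\BG\rtimes\Out(\BG)$ together with $\rho_\kappa\to\rho$, with $\hat\mu$ acting through $\kappa$, which is central in $(\BG\rtimes\Out(\BG))_\kappa$. The same proposition removes what you call the main obstacle. $F$ is a torsor under the full, disconnected centraliser $(\BG\rtimes\Out(\BG))_\kappa$, not under $H_\CE$ or its identity component. Then $\rho_\kappa$ is just $\pi_0(F)$, and no cocycle comparison is needed.

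The second gap is that you never use the rigidification at $\infty$. It is built into the tilde spaces and into the data $\CE$, which come with a trivialisation at $\infty$. Without it, an $S$-family does not determine a single datum over $\bar k$. A locally constant ``type'' only gives a conjugacy class; also, $\kappa\colon\hat\mu\to\BT$ lands in the torus of $\BG$, not in $\check T$. The paper uses $\tilde\infty$ and Construction~\ref{def-4.36} to turn $\phi|_{\infty\times S}$ into an honest homomorphism $\hat\mu\to\BT$. This is constant on $S$ because homomorphisms of diagonalisable groups are locally constant.

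More seriously, $\pi_0(F)$ is a priori a $\pi_0(\kappa)$-torsor on $X\times S$ that may have monodromy along $S$. Lemma~\ref{trival-infty} provides a canonical point of $\rho_{\kappa,\infty}$, coming from the $\BT$-torsor $\theta^{-1}(\tilde\infty,\infty_\rho)\subset F$. Together with $\pi_1(X\times S')\cong\pi_1(X)\times\pi_1(S')$, this descends $\rho_\kappa$ to $X$. Without this step your inverse does not land in any fixed $\widetilde{\Mb}^{G-\infty,\ani}_\CE$.

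The same rigidification shows that the component-group issue you expect in the fully faithfulness step does not occur, and no quotient by automorphisms of $\CE$ appears. A $\pi_0(\kappa)$-torsor on a connected base with a fixed point at $\infty$ has no nontrivial automorphisms. Any automorphism of $(E,\theta,\phi)$ preserves $\theta^{-1}(\tilde\infty,\infty_\rho)$, hence acts trivially on $\rho_\kappa$, and so is exactly an automorphism of $(F,\theta_F)$.
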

For a version over $k$ see Remark \ref{desc}. In order for the arguments in \cite[Section 6]{GWZ20b} to remain valid, we need to relate the count of rational points  $\Mb_{H_\CE}(k)$ and $\Mb_{\CE}(k)$ when $k=\BF_q$ is a finite field of large enough residue characteristic. Concretely we consider the Hitchin fibrations
\[\widetilde{\Mb}^{ani}_{H_\CE} \rightarrow \widetilde{\A}^{ani}_{H_\CE} \leftarrow \widetilde{\Mb}^{ani}_{\CE}.   \]

\begin{theorem}[Theorem \ref{stabct}] For every $a \in \widetilde{\A}^{ani}_{H_\CE}(k)$ the Hitchin fibers $\widetilde{\Mb}_{H_\CE,a}$ and  $\widetilde{\Mb}_{\CE,a}$ over $a$ satisfy
\[\#^{stab}\widetilde{\Mb}_{H_\CE,a}(k) = \#^{stab}\widetilde{\Mb}_{\CE,a}(k),    \]
where $\#^{stab}$ denotes the stabilized point count.
\end{theorem}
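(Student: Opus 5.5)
Both Hitchin fibres over $a$ will be described as torsors under the same Picard stack, the Ng\^o group $\mathcal{P}_a$ of regular centralisers, via a twisted product formula, and the twist will then be shown to be invisible to stabilised point counts. Recall that the stabilised count is
\[\#^{stab}\mathcal{M}_a(k) = \sum_{x\in \mathcal{M}_a(k)/\simeq} \frac{1}{\#\Aut(x)}\]
restricted to the $\kappa=1$ isotypic part. Equivalently, it is the part of the count controlled by the $\pi_0(\mathcal{P}_a)$-invariant part of the cohomology. Concretely, $\#^{stab}$ is obtained by averaging the count over the characters of $\pi_0(\mathcal{P}_a)$ and keeping only the trivial one.

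First I will establish the product formula for twisted fibres. The regular centraliser group scheme $J_a\to X$ is defined from $a$ alone and contains $Z(H_\CE)$. Hence $\mathcal{P}_a=\mathrm{Bun}_{J_a}$ acts on $\widetilde{\Mb}_{\CE,a}$ by twisting a $\beta$-twisted Higgs bundle by a $J_a$-torsor, exactly as in the untwisted case. The regular locus $\widetilde{\Mb}^{reg}_{\CE,a}$ is then a $\mathcal{P}_a$-torsor. It is nonempty over $\bar k$, because the Kostant section produces a regular twisted Higgs bundle once one chooses, \'etale locally on $X$, a trivialisation of $\beta$. On the anisotropic locus, Ng\^o's product formula for the untwisted fibre then carries over: local affine Springer fibres are insensitive to the gerbe, since $\beta$ is locally trivial. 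So $\widetilde{\Mb}_{\CE,a}$ is a quotient of $\mathcal{P}_a\times^{\prod_v \mathcal{P}_v}\prod_v \mathcal{M}_v$, just as $\widetilde{\Mb}_{H_\CE,a}$ is. The only difference between the two fibres is that the regular part $\widetilde{\Mb}^{reg}_{\CE,a}$ is a possibly nontrivial $\mathcal{P}_a$-torsor over $k$. Its class is $[\beta]$ pushed along $Z\to J_a$, viewed in $H^1(k,\mathcal{P}_a)$, or more precisely the class measuring the obstruction to trivialising the image of $\beta$ in $H^2(X,J_a)$.

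Second, I will compare the counts. For a $k$-form $T$ of a $\mathcal{P}_a$-torsor, twisting the fibre by $T$ changes each $\kappa$-isotypic piece of the count by the scalar $\langle\kappa,[T]\rangle$. This is a Lang-type argument: over a finite field, $\mathcal{P}_a$-torsors are classified by $H^1(k,\pi_0\mathcal{P}_a)=\pi_0(\mathcal{P}_a)_{\mathrm{Frob}}$, and the count of $\mathcal{P}_a\times^{\prod}\prod \mathcal{M}_v$ twisted by $T$ decomposes over characters $\kappa$ of this group. For $\kappa=1$ the scalar is $1$, so the stabilised counts agree. Alternatively, I could argue cohomologically: the twisted and untwisted fibres become isomorphic over $\bar k$, compatibly with the $\mathcal{P}_a$-action, and they differ by Frobenius twisted by an element of $\mathcal{P}_a(\bar k)$. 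Since $\mathcal{P}_a^0$ acts trivially on cohomology, Frobenius agrees on the $\pi_0$-invariant parts, and Grothendieck--Lefschetz gives the equality.

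The main obstacle is the product formula step. The delicate part is showing that $\widetilde{\Mb}_{\CE,a}$ is nonempty over $\bar k$ and, over $\bar k$, $\mathcal{P}_a$-equivariantly isomorphic to $\widetilde{\Mb}_{H_\CE,a}$. This requires $\beta\otimes\bar k$ to become trivial after pushing to $J_a$, or at least to act through a translation. Over $\bar k$ we have $H^2(X_{\bar k},Z)\to H^2(X_{\bar k},J_a)$, and for anisotropic $a$ this map should kill the class, because $H^2(X_{\bar k},J_a)$ is controlled by $\pi_0$ and degree data. If that fails, I would instead compare $\widetilde{\Mb}_{\CE,a}$ with a component-shifted untwisted fibre, as in the $\SL_n$ example (Example \ref{slex}), and check that the shift lies in $\pi_0$, so it is invisible to the stabilised part.
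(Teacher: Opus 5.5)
\textbf{There is a genuine gap at the crux of your argument.} You need the twisted fibre to become isomorphic to the untwisted one over $\bar k$, and you flag this yourself as the main obstacle, but you never prove it. Both justifications you give fail.

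\emph{Nonemptiness via Kostant sections is circular.} By Kostant, $[h^{\reg}_{D,a}/H]\cong B_XJ_a$. So regular $\beta$-twisted Higgs bundles with characteristic $a$ are exactly the sections of the $J_a$-gerbe $\beta_a=B_XJ_a\times^{B_XZ}\beta$. Local Kostant sections, taken with local trivialisations of $\beta$, glue if and only if $\beta_a$ is neutral. \'Etale-local triviality of $\beta$ says nothing about this.

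\emph{The other justifications do not work either.} ``$H^2(X_{\bar k},J_a)$ is controlled by $\pi_0$ and degree data'' is not an argument. The fallback ``component shift'' of Example \ref{slex} is not an alternative: that shift is itself a $\mathbb{P}_{H,a}$-twist, and it exists only because the class dies.

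\emph{The missing idea.} In fact $H^2_{\et}(X_{\bar k},J_a)=0$ outright, because $J_a$ is generically a torus for $a\in\A^\heartsuit$. By Tsen the function field of $X_{\bar k}$ is $C_1$, so tori have no higher Galois cohomology there (Proposition \ref{torus-field-trivial}). The cone of $J_a\to Rj_*j^*J_a$ is a skyscraper, and together these give Lemma \ref{torus-curve-trivial}. This is exactly the input the paper uses, in Proposition \ref{heart}(iv) and in Proposition \ref{prop:productformula}.

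\textbf{With that lemma your strategy closes, and more directly than either proof in the paper.} You do not need the product formula at all.
\begin{enumerate}
\item The stack $\tau=\Sect_X(\beta_a)$ of trivialisations of $\beta_a$ is a $\mathbb{P}_{H,a}$-torsor over $k$; it is nonempty over a finite extension of $k$.
\item The identification $[h_{D,a}/H]^\beta\cong[h_{D,a}/H]\times^{B_XJ_a}\beta_a$ then gives $\mathbb{M}^\beta_{H,a}\cong\mathbb{M}_{H,a}\times^{\mathbb{P}_{H,a}}\tau$.
\item Your cohomological argument finishes the proof: Frobenius changes by a point of $\mathbb{P}_{H,a}(\bar k)$, and $\mathbb{P}^0_{H,a}$ acts trivially on cohomology, so the traces on the $\pi_0(\mathbb{P}_{H,a})$-invariant parts agree.
\end{enumerate}

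\textbf{Comparison with the paper's two proofs.}
\begin{enumerate}
\item The $p$-adic proof uses the $\mathbb{P}$-twist only on the regular locus. It needs the orbifold formula and Lemma \ref{lift} to reach the full fibre.
\item The product-formula proof compares $[\mathbb{M}^\beta_{H,a}/\mathbb{P}_{H,a}](k)$ with the untwisted quotient. It uses neutrality of $\beta_a$ on each formal disc over $k(\nu)$, obtained from $H^2(k(\nu),J_a)=0$ and \cite{Bouthier2022}. Your phrase ``$\beta$ is locally trivial'' would not suffice for that local step over $k$ either.
\end{enumerate}
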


In Section \ref{stpc} we give two proofs for this theorem, one using $p$-adic integration and one by establishing an analogue of Ngô's product formula \cite[Proposition 4.15.1]{Ng10}.

The latter is of independent interest as it highlights the striking similarity of the local geometry of the moduli spaces $\BM_{G}^{\beta}$ for varying choices of the central gerbe $\beta$.

\begin{proposition}[Product formula, Proposition \ref{prop:productformula}]
    There is an equivalence between categories of $k$-points
		\[[\mathbb{M}_{H,a}^\beta/\mathbb{P}_{H,a}](k)\cong
		\prod_{\nu\in |X-U|}[\mathbb{M}_{H,a,\nu}^\beta/\mathbb{P}_{H, a,\nu}](k).\]
\end{proposition}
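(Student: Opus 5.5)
We follow the strategy of Ngô's proof of \cite[Proposition 4.15.1]{Ng10}, adapted to the gerbe-twisted setting. Fix $a$ with associated cameral cover, and let $U\subset X$ be the open locus over which $a$ is regular semisimple (so $X-U$ is finite). We will factor the restriction to $U$ and to the formal discs $D_\nu=\Spec \widehat{\Oc}_{X,\nu}$ through a Beauville--Laszlo type gluing. The local objects $\mathbb{M}^\beta_{H,a,\nu}$ are the affine Springer fibres twisted by $\beta|_{D_\nu}$, acted on by the local Picard $\mathbb{P}_{H,a,\nu}$.

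\textbf{Step 1 (the generic part).} Over $U$ we show that $[\mathbb{M}^\beta_{H,a}|_U/\mathbb{P}_{H,a}|_U]$ is equivalent to a point. Over the regular semisimple locus every $\beta$-twisted Higgs bundle with invariant $a$ is locally isomorphic to the Kostant-type section, and its automorphism sheaf is the regular centralizer $J_a$. The obstruction to having a global twisted Kostant section is the class of $\beta$ pushed along $Z\to J_a$. Hence the twisted Higgs bundles over $U$ with invariant $a$ form a torsor (a gerbe-twisted form) under $J_a$-torsors over $U$. The key point is that this category is nonempty étale locally and $\mathbb{P}_{H,a}|_U$ acts simply transitively, so the quotient stack is trivial. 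Over a finite field we use that the relevant $H^2$ of a smooth connected group over a curve or its open subsets vanishes (Lang, Steinberg/Tsen), which gives $k$-points. The same reasoning over each punctured disc $D_\nu^\times$ gives a trivial quotient.

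\textbf{Step 2 (gluing).} Using Beauville--Laszlo descent for gerbe-twisted objects, valid since $\beta$ is a $Z$-gerbe and gluing of stacks is compatible with twisting, a $\beta$-twisted Higgs bundle on $X$ with invariant $a$ is the same as the following data:
\begin{itemize}
\item a twisted Higgs bundle on $U$,
\item twisted Higgs bundles on each $D_\nu$,
\item isomorphisms over each $D_\nu^\times$.
\end{itemize}
The same description holds for $\mathbb{P}_{H,a}$. Taking quotients, the $U$-part and the $D_\nu^\times$-parts become trivial by Step 1. The gluing data are then absorbed by the actions of $\mathbb{P}_{H,a}$ and $\prod_\nu\mathbb{P}_{H,a,\nu}$. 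This yields a map from $[\mathbb{M}^\beta_{H,a}/\mathbb{P}_{H,a}]$ to $\prod_\nu[\mathbb{M}^\beta_{H,a,\nu}/\mathbb{P}_{H,a,\nu}]$, given by restriction to the discs. We check it is an equivalence on $k$-points by comparing the fibres of the two sides, exactly as in the untwisted case.

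\textbf{Main obstacle.} The hardest part is Step 1 over $k$ rather than $\bar k$: the triviality of the generic piece. We need that a $\beta$-twisted Higgs bundle with regular semisimple invariant exists over $U$ and over $D_\nu^\times$, i.e.\ that the pushed gerbe $\beta\cdot J_a$ is neutral there, and that the quotient has a unique $k$-point. We would try the following:
\begin{itemize}
\item use that $D_\nu^\times$ and $U$ are affine of cohomological dimension $\le 1$ for the relevant coefficients, or that $H^2(F_\nu,J_a)=0$ for connected $J_a$ over local fields of large residue characteristic;
\item reduce the nonconnected case via the component group of $J_a$ and the fact that $\beta$ comes from $Z$.
\end{itemize}
If global neutrality on $U$ fails, we instead show that the obstruction on $X$ is compatible on both sides, so the equivalence still holds after matching twisted-local components.
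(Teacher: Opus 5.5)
The gap is in the step you flag as the main obstacle, and it comes from aiming at the wrong target. For the quotients over $U$ and over the $D_\nu^\times$ to be trivial, you only need the pushed $J_a$-gerbe $\beta_a=B_XJ_a\times^{B_XZ}\beta$ to become neutral after a finite extension of $k$. A $k$-point of a quotient $[\mathcal{M}/\mathcal{P}]$ is a $\mathcal{P}$-torsor over $k$ with an equivariant map to $\mathcal{M}$; it does not require $\mathcal{M}(k)\neq\emptyset$. Moreover, the arguments you offer for neutrality over $k$ itself are false:
\begin{itemize}
\item Lang's and Steinberg's theorems are about $H^1$, and Tsen's theorem is about function fields over an algebraically closed field.
\item $U$ and $F_\nu=k_\nu((t))$ have cohomological dimension $2$, not $\le 1$. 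For instance $H^2(F_\nu,\mathbb{G}_m)=\mathrm{Br}(F_\nu)\cong\mathbb{Q}/\mathbb{Z}$, so $H^2(F_\nu,J_a)\neq 0$ whenever $J_a|_{F_\nu}$ is a split torus.
\item $H^2(U,\mathbb{G}_m)=\mathrm{Br}(U)\neq 0$ as soon as $X-U$ contains two closed points.
\end{itemize}
On the full discs $\beta_a$ is in fact neutral over $k$, but for a different reason: $H^2(\nu,J_a)=0$ for the finite residue field, combined with lifting along $X_\nu\to\nu$, as in Proposition \ref{comp-stab-prod}. So the sentence ``which gives $k$-points'' in Step 1 is unjustified. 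Your fallback about ``matching twisted-local components'' is not an argument, and the $k$-point comparison in Step 2 is left without justification.

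The missing idea is to work over $\bar k$. There $J_a$ is generically a torus, so $H^2_{\et}(X_{\bar k},J_a)=0$ by Lemma \ref{torus-curve-trivial}. This is stronger than what your Step 1 needs: it trivializes $\beta_a$ on all of $X$, not just on $U$, after a finite extension of $k$. A trivialization gives $[h_{D,a}/H]^\beta\cong[h_{D,a}/H]$ compatibly with the $B_XJ_a$-action. Hence you get $\mathbb{P}_{H,a}$-equivariant identifications $\mathbb{M}^\beta_{H,a}\cong\mathbb{M}_{H,a}$ and $\mathbb{M}^\beta_{H,a,\nu}\cong\mathbb{M}_{H,a,\nu}$, compatible with restriction. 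The equivalence on $\bar k$-points is then literally Ng\^o's untwisted product formula \cite[4.6]{MR2218781}, and no twisted Beauville--Laszlo gluing is needed. The statement on $k$-points follows because the restriction morphism $\gamma$ is defined over $k$, so the $\bar k$-equivalence is $\Gal(\bar k/k)$-equivariant. Your gluing skeleton would also go through over $\bar k$ with this vanishing as input. But it re-proves Ng\^o's formula instead of reducing to it, and the descent to $k$ must still be done via Galois equivariance rather than via rational neutrality.
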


\begin{rmk}
Soon after having completed a first draft of the present paper, the authors learned that the moduli problem $\BM_{G}^{\beta}$ appears in the recent work of Bu--Davison--Ib\'añez N\'uñez--Kinjo--P\v adurariu \cite[Section 10.3]{BDINKP}, where it is used to formulate a generalisation of the Hausel--Thaddeus conjecture \cite{HT03} for arbitrary semisimple structure groups.
\end{rmk}

\subsubsection*{Acknowledgments}X. L. thanks Yihang Zhu for continuous support and helpful discussions.

M. G. was supported by an NSERC discovery grant and an Alfred P. Sloan fellowship. D.W. was supported by the Swiss National Science Foundation [no. 218340]. P.Z. was funded by the Deutsche Forschungsgemeinschaft (DFG, German Research Foundation) – Projektnummer 547483711.

\section{Twisted $H$-bundles}


\subsection{Definitions}

Let $U$ be a scheme, $H \to U$ be a smooth group $U$-scheme with centre $Z=Z(H) \to U$. We denote by $\beta$ a $Z$-gerbe and by $[\beta] \in H^2_{\text{\'et}}(U,Z)$ its classifying class in \'etale cohomology. In the following we define the notion of a $\beta$-twisted $H$-bundle, a variation on the concept of a twisted sheaf. We begin with a conceptual treatment and subsequently turn to a concrete description in terms of cocycles just after the following definition.

\begin{definition}\label{defi:twisted-bundle}
A $\beta$-twisted $H$-bundle $\Vv$ is given by a section represented by a dashed arrow in the following Cartesian diagram of (higher) $U$-stacks:
\[
\xymatrix{
\Tw_H^{\beta} \ar[r] \ar[d] & U \ar@{-->}@/_1.0pc/[l]_{\Vv} \ar[d]_{\beta} \\
B_U(H/Z) \ar[r] & B_U^2Z.
}
\]
\end{definition}

In other words, a $\beta$-twisted $H$-bundle over $U$ is given by a $H/Z$-torsor $V$ over $U$ together with an isomorphism $[V/H]\cong \beta$ of $Z$-gerbes.

It is also possible to give a cocycle description of $\beta$-twisted $H$-bundles. For this purpose we represent the class $[\beta] \in H^2_{\text{\'et}}(U,Z(H))$ by an \'etale $3$-cocycle $(\beta_{ijk}) \in Z(U_{ijk})$, where $(U_i)_{i \in I}$ denotes the corresponding \'etale cover of $U$ with triple intersections $U_i \times_U U_j \times_{U} U_k$ denoted by $U_{ijk}$. It is then possible to represent $\mathcal{V}$ by sections $h_{ij} \in H(U_{ij})$ satisfying the \emph{twisted cocycle relation}
\[h_{ij}h_{jk} = \beta_{ijk} \cdot{} h_{ik}.\]
Intuitively speaking, a $\beta$-twisted $H$-bundle $\Vv$ is therefore \'etale locally given by a $H$-bundles $\Vv_i$ defined on \'etale opens $U_i \to U$ together with glueing data that only satisfies a twisted version of the usual glueing condition for $H$-bundles.

From this point of view, it is clear that there is also a related notion of $\beta$-twisted $H$-Higgs bundles. We now spell out a definition independent of a chosen \'etale cover. For this purpose, we fix a line bundle $D$ on $U$ and define $\beta$-twisted $H$-Higgs bundles with coefficients in $D$. To shorten the notation we will denote the quotient $H/Z$ by $\overline{H}$.

\begin{lemma} \label{TwPushout}
    There is a canonical isomorphism
    \begin{equation*}
        \Tw_H^\beta \cong B_U H \times^{B_U Z} \beta.
    \end{equation*}
\end{lemma}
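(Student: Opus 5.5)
We will exhibit the canonical map and then check that it is an equivalence fibrewise. Here $B_U H \times^{B_U Z} \beta$ denotes the quotient of $B_UH\times_U\beta$ by the diagonal action of the group stack $B_UZ$. Because $Z$ is central, $B_UZ$ is a commutative group stack; it acts on $B_UH$ by twisting torsors, $(P,T)\mapsto P\times^Z T$, and on the $Z$-gerbe $\beta$ simply transitively. Equivalently, the quotient is the pushout of $B_UH \leftarrow B_UZ \to \beta$ formed using the $B_UZ$-torsor structure of $\beta$.

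\textbf{Step 1: construct a map to $B_U\overline H$ over $B^2_UZ$.} The starting point is the fibre sequence $B_UZ\to B_UH\to B_U\overline H\to B^2_UZ$, which comes from the central extension $1\to Z\to H\to\overline H\to1$. The map $B_UH\to B_U\overline H$ sends $P\mapsto P/Z$, and it is $B_UZ$-invariant because $(P\times^Z T)/Z\cong P/Z$ canonically. Hence
\[B_UH\times_U\beta\to B_UH\to B_U\overline H\]
descends to $\phi\colon B_UH\times^{B_UZ}\beta\to B_U\overline H$. We also have the map $B_UH\times^{B_UZ}\beta\to U$.

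\textbf{Step 2: compare with the section $\beta$ of $B^2_UZ$.} We need a $2$-commutativity datum: the composite to $B^2_UZ$ must be identified with $\beta$. The image of $P/Z$ in $B^2_UZ$ is the gerbe of liftings of $P/Z$ to an $H$-torsor. The given $P$ trivialises this gerbe, so we can send the pair $(P,s)$, with $s$ a section of $\beta$, to the isomorphism of $Z$-gerbes $[\text{lifts of }P/Z]\cong\beta$ that matches $P$ with $s$. This datum is invariant under $(P,s)\mapsto(P\times^ZT,\,s\cdot T)$, so it descends. By the universal property of the Cartesian square in Definition \ref{defi:twisted-bundle}, we obtain a map $\Phi\colon B_UH\times^{B_UZ}\beta\to\Tw^\beta_H$.

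\textbf{Step 3: show $\Phi$ is an equivalence.} This is étale local on $U$, so we may assume $\beta$ is trivial and choose a section $s$. Then $\beta\simeq B_UZ$ as a $B_UZ$-torsor, and the source becomes $B_UH\times^{B_UZ}B_UZ\simeq B_UH$. On the other side, $\Tw^\beta_H$ is the fibre of $B_U\overline H\to B^2_UZ$ over the trivial gerbe, which is $B_UH$ by the fibre sequence. It remains to check that $\Phi$ corresponds to the identity under these identifications. As a cross-check, the cocycle description agrees: after choosing local sections $s_i$ of $\beta$ with $s_j=s_i\beta_{ij}$-type transitions, an $H$-torsor glued by $h_{ij}$ corresponds to twisted cocycles, matching the relation $h_{ij}h_{jk}=\beta_{ijk}h_{ik}$.

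The main obstacle is the higher-categorical bookkeeping. The quotient by the group stack $B_UZ$ and the descent of the $2$-morphism in Step 2 have to be made coherent, and one has to verify that the local identification in Step 3 is compatible with the map rather than merely an abstract equivalence. I would handle this by working with the concrete groupoid model: objects are pairs $(P,s)$, and morphisms $(P,s)\to(P',s')$ are pairs $(T,\ T\text{-twisted isomorphisms})$, taken up to the evident $2$-isomorphisms. Both sides can then be computed as stacks of groupoids on the étale site, where the required checks become routine.
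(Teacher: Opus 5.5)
Your proposal is correct. Steps 1--2 construct the same map as the paper. The paper's proof just says that the two quotient maps to $B_UH/B_UZ=B_U\overline{H}$ and $\beta/B_UZ=U$ induce a morphism to the fibre product $\Tw^\beta_H$. Your explicit $2$-cell (the isomorphism of lifting gerbes sending $P$ to $s$, invariant under the diagonal $B_UZ$-action) spells out what the paper leaves implicit.

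The routes diverge in showing that $\Phi$ is an equivalence. The paper observes that $\Phi$ is a morphism of $B_UZ$-torsors and is therefore automatically an equivalence. The base over which both sides are $B_UZ$-torsors is $B_U\overline{H}$:
\begin{itemize}
\item the source is a $B_UZ$-torsor over $B_UH/B_UZ$;
\item the target is the pullback along $B_U\overline{H}\to B^2_UZ$ of the $B_UZ$-torsor $\beta\colon U\to B^2_UZ$.
\end{itemize}
So one only needs $\Phi$ to lie over $B_U\overline{H}$, which holds by construction since its first component is $\phi$, and to be $B_UZ$-equivariant. With this, the check you single out as the main obstacle in Step 3 never arises.

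Your local argument also works, and that check is immediate. Once $s$ trivialises $\beta$, the identification $\Tw^\beta_H\simeq B_UH$ sends $(V,\iota)$ to the lift $\iota^{-1}(s)$ of $V$. Meanwhile $\Phi(P,s)=(P/Z,\iota_{P,s})$ with $\iota_{P,s}(P)=s$, so the composite is $P\mapsto P$.

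In short, the torsor argument is shorter and needs no choice of local section. Your route makes the local model, and its agreement with the twisted cocycle description, explicit.
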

\begin{proof}
    The two projections $B_U H \times^{B_U Z} \beta \to B_U H/ B_U Z=B_U \bar H$ and $B_U H \times^{B_U Z} \beta \to \beta/B_U Z=U$ induce a morphism $B_U H \times^{B_U Z} \beta \to \Tw_H^\beta$ of $B_U Z$-torsors over $U$. Such a morphism is automatically an isomorphism.
\end{proof}
\begin{definition}
Consider the following Cartesian diagram of (higher) $U$-stacks
\[
\xymatrix{
& [h_D/H]^{\beta} \ar[ld] \ar[r] \ar[d] &\Tw_H^{\beta} \ar[r] \ar[d] & U \ar@{-->}@/_1.0pc/[l]^{\Vv} \ar@{-->}@/_2.0pc/[ll]_{(\Vv,\theta)} \ar[d]_{\beta} \\
(\mathfrak{c}_h)_D & \ar[l] [h_D/\overline{H}] \ar[r] & B_U(\overline{H}) \ar[r] & B_U^2Z.
}
\]
A \emph{$\beta$-twisted $H$-Higgs bundle $(\Vv,\theta)$ with coefficients in $D$} is given by a section $U \to [h_D/H]^{\beta}$. The morphism given by the composition $U \to (\mathfrak{c}_h)_D$ yields an element in $H^0(U,(\mathfrak{c}_h)_D)$, which we will denote by $\chi(\theta)$.
\end{definition}

\begin{rmk} Since we are only interested in the case where $U$ is a (relative) curve, we omit the integrability condition one usually imposes on Higgs bundles on a higher dimensional base. 
\end{rmk}

\subsection{Relation to the classical case}

Henceforth we will study $\beta$-twisted $H$-bundles on a curve $X$ over a perfect field $k$ with coefficients in a line bundle $D$. If the characteristic of $k$ is positive, we assume it is bigger than twice the coxeter number of $H$.

\begin{definition}
We denote by $\Mb_H^{\beta}$ the moduli stack of $\beta$-twisted $H$-Higgs bundles. The $Z$-rigidification of this stack will be denoted by $\Mc_H^{\beta}$. The association $(\Vv,\theta) \mapsto \chi(\theta)$ defines a Hitchin morphism $\chi\colon \Mc_{H}^{\beta} \to \A_H$.
\end{definition}

First of all, we observe that the moduli stack $\Mb^{\beta}_{H}$ is closely related to a classical moduli problem of Higgs bundles.

\begin{construction}\label{const:untwist}
The map $H \to \overline{H} = H/Z$ gives rise to a morphism of algebraic stacks $\Mb_H^{\beta} \to \Mb_{\overline{H}}$ that fits into a commutative diagram
\[
\xymatrix{
\Mb_H^{\beta} \ar[r] \ar[d] & \Mb_{\overline{H}} \ar[d] \\
\A_H \ar[r] & \A_{\overline{H}}.
}
\]
\end{construction}

\begin{proposition}\label{torus-field-trivial}
	Let $K/\bar k$ be an extension of transcendental degree $1$ and $T$ be a torus over $K$. Then $H^i(K, T)=0$ for $i\geq 1$.
\end{proposition}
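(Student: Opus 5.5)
Throughout, $K$ is a field of transcendence degree $1$ over the algebraically closed field $\bar k$, and $T$ is a $K$-torus. The plan is to reduce to two classical inputs: Tsen's theorem, which gives $\operatorname{cd}(K)\le 1$, and Steinberg's theorem for connected linear groups over fields of cohomological dimension $\le 1$.

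\emph{Cohomological dimension.} Every finitely generated subextension of $K/\bar k$ is the function field of a curve over $\bar k$, hence a $C_1$-field by Tsen's theorem. Since $K$ is a filtered union of such fields, $K$ is itself $C_1$. Serre then gives $\operatorname{cd}(K)\le 1$, and in fact $\operatorname{cd}_\ell(K)\le 1$ for all primes $\ell$, including $\ell=p$ in positive characteristic. I will take care that the $p$-part is covered as well. One way is that $\operatorname{cd}_p\le 1$ holds for every field of characteristic $p$. Another is that $\mathrm{Br}(L)=0$ for all finite extensions $L/K$, because $C_1$ passes to finite extensions.

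\emph{Degree $1$.} Since $T$ is a connected linear algebraic group over a perfect-or-not field of $\operatorname{cd}\le 1$, Steinberg's theorem (Serre, Galois Cohomology III.2.3) gives $H^1(K,T)=0$. If the field is imperfect, I would use Borel–Springer's extension, or argue directly as follows. Choose a finite Galois splitting field $L$ and an embedding of $T$ into an induced torus $R_{L/K}\mathbb{G}_m^n$ with quotient torus $T'$. Then $H^1(K,T)$ is a quotient of $H^0(K,T')$ modulo the image, and this route needs the norm surjectivity that Steinberg provides. It is cleaner simply to cite Steinberg, together with Serre's remark that it holds for $\operatorname{cd}\le 1$ fields without the perfectness hypothesis when the group is reductive.

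\emph{Higher degrees.} For $i\ge 2$, the group $H^i(K,T)$ is torsion, since Galois cohomology in positive degree is torsion. So it suffices to show its $\ell$-primary part vanishes for each prime $\ell$. For $\ell\neq p$, use the Kummer sequence
\[0\to T[\ell]\to T\xrightarrow{\ell} T\to 0,\]
which is exact on $K^{sep}$-points. The group $H^i(K,T[\ell])=0$ for $i\ge 2$ because $\operatorname{cd}_\ell\le 1$, so multiplication by $\ell$ is injective on $H^i(K,T)$ for $i\ge 2$; hence its $\ell$-part vanishes. For $\ell=p$, multiplication by $p$ on $T(K^{sep})$ is injective but not surjective. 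I would instead argue that $H^i(K,T)[p^\infty]$ is a torsion $\Gal$-module cohomology class of $p$-power order, which vanishes since $\operatorname{cd}_p(K)\le 1$. Concretely, $T(K^{sep})$ is uniquely $p$-divisible up to passing to the perfect closure, and the Galois groups of $K$ and of its perfect closure agree. Since $\operatorname{cd}_p(\mathrm{Gal})\le 1$ kills $p$-primary torsion in discrete modules, the $p$-part vanishes. Alternatively, reduce to induced tori via a resolution $0\to T\to R\to R'\to0$: Shapiro's lemma plus Hilbert 90 and $\mathrm{Br}(L)=0$ give $H^i(K,R)=0$ for $i\ge1$, and dimension shifting then yields $H^i(K,T)\cong H^{i-1}(K,R')$-cokernel terms. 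This second route is probably the most uniform.

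\emph{Main obstacle.} The main subtlety is degree $1$, since $H^1$ of a non-induced torus is not formally killed. I would rely on Steinberg/Serre for this. The $p$-primary part in positive characteristic needs the care described above.
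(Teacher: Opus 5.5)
Your overall plan can be made to work, and it differs from the paper's, but one of your two arguments for the $p$-primary part is wrong and should be dropped. The perfect-closure argument uses only $\operatorname{cd}_p(K)\le 1$, which holds for \emph{every} field of characteristic $p$. Applied to $T=\mathbb{G}_m$, it would give $\mathrm{Br}(K)(p)=0$ for all such fields, which is false, e.g.\ for $\mathbb{F}_p((t))$. Concretely, set $\Gamma_K=\Gal(K^{sep}/K)$. The sequence $0\to T(K^{sep})\to T(\bar K)\to Q\to 0$, with $Q$ $p$-primary torsion, gives an exact sequence $H^1(\Gamma_K,Q)\to H^2(K,T)\to H^2(\Gamma_K,T(\bar K))$. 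The last group has no $p$-torsion. But $H^1(\Gamma_K,Q)$ is not controlled by $\operatorname{cd}_p\le 1$: for a non-torsion module such as $T(K^{sep})$ one only gets $\operatorname{scd}_p\le 2$. So this argument handles the $p$-part only in degrees $i\ge 3$. In degree $2$ the $p$-part genuinely needs Tsen's theorem in the form $\mathrm{Br}(L)=0$.

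Your second route is correct: embed $T$ in an induced torus $R$ with torus quotient $R'$, and induct via $H^i(K,T)\cong H^{i-1}(K,R')$ for $i\ge 2$. Note, however, that the induction needs $H^i(K,R)\cong H^i(L,\mathbb{G}_m)^{n}=0$ for \emph{all} $i\ge 1$. Hilbert 90 and $\mathrm{Br}(L)=0$ only give $i=1,2$. For $i\ge 3$ you must add your Kummer argument for $\ell\ne p$ and the perfect-closure argument for $\ell=p$, which are valid in those degrees.

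Likewise, in degree $1$ the hypothesis Steinberg/Borel--Springer need over an imperfect field is that the Brauer groups of all finite extensions vanish ($\dim K\le 1$ in Serre's sense), not merely $\operatorname{cd}(K)\le 1$. For cyclic $L/K$ of degree $p$ one has $H^1(K,R^1_{L/K}\mathbb{G}_m)\cong K^*/N(L^*)\cong \mathrm{Br}(L/K)$. You do obtain this hypothesis from Tsen, so the citation is fine once the hypothesis is stated correctly.

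The paper's proof avoids all of this bookkeeping. It cites Serre, \emph{Local Fields}, X \S 7 (Proposition 11 and the Application that follows). By Tsen, the Brauer group of every finite extension of $K$ vanishes. Together with Hilbert 90 and Tate's criterion, this makes $L^*$ a cohomologically trivial $\Gal(L/K)$-module for every finite Galois $L/K$. Hence $T(L)\cong X_*(T)\otimes L^*$ is cohomologically trivial whenever $L$ splits $T$, so $H^n(\Gal(L/K),T(L))=0$ for all $n\ge 1$, and one passes to the colimit over $L$. This treats degree $1$, all higher degrees and all primes including $p$ at once, with no Steinberg theorem, Kummer sequences or cohomological dimension. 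In particular, degree $1$ is not really the main obstacle, since Steinberg's theorem for tori is proved by exactly this argument. Your route is more modular and uses only standard cohomological-dimension tools, at the cost of the prime-by-prime and degree-by-degree care described above.
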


\begin{proof}
	By Tsen's theorem, the conditions in \cite[p.169 Proposition 11]{MR0354618} are satisfied. 
	By \cite[p.170 Application]{MR0354618}, for any finite Galois extension $L/K$ on which $T_L$ is split, we have that  $T(L)$ is cohomologically trivial, namely, $\hat H^n(\Gal(L/K), T(L))=0$ for any $n$. In particular, $H^n(\Gal(L/K), T(L))=0$ for any $n\geq 1$. Varying $L$ we conclude the claim.
\end{proof}

\begin{lemma}\label{torus-curve-trivial}
	Let $T$ be a group scheme on $X$ such that its restriction to the generic point it is a torus. Then $H^i_{\et}(X_{\bar k},T)$ is trivial for $i\geq 2$. 
\end{lemma}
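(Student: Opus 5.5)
We reduce the vanishing to the generic point and then control the contributions of the finitely many closed points. Write $\bar X = X_{\bar k}$, let $\eta$ be its generic point, and let $j\colon \eta \to \bar X$ be the inclusion. The plan is to compare $T$ with $j_*T_\eta$ via the adjunction map $T \to j_*j^*T$ of étale sheaves on $\bar X$, and then compute the cohomology of $j_*T_\eta$ with the Leray spectral sequence.

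First we handle $j_*T_\eta$. The stalk of $R^qj_*T_\eta$ at a geometric point $\bar x$ of $\bar X$ is $H^q(K_{\bar x}, T)$. Here $K_{\bar x}$ is either the function field of the strict henselisation at a closed point, or $\bar k(X)$ itself at the generic geometric point. In the second case Proposition~\ref{torus-field-trivial} applies directly. In the first case $K_{\bar x}$ is the fraction field of a henselian DVR with algebraically closed residue field (a $C_1$ field by Lang), and the same argument as in Proposition~\ref{torus-field-trivial} (Serre's criterion for cohomological triviality of tori over fields of dimension $\le 1$) gives vanishing in degrees $\ge1$. Hence $R^qj_*T_\eta=0$ for $q\ge1$, and Leray gives
\[H^i(\bar X, j_*T_\eta)\cong H^i(\bar k(X), T_\eta)=0 \quad (i\ge1)\]
by Proposition~\ref{torus-field-trivial}.

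Next we compare $T$ with $j_*T_\eta$. The map $T\to j_*T_\eta$ is injective, because $T$ is a group scheme which we may take flat/separated, so that sections are determined generically. We expect this to need the implicit hypothesis that $T$ is smooth and separated over $X$, which holds for the group schemes of interest, namely centres and tori attached to cameral covers. Its cokernel $Q$ is supported on the finite set of closed points where $T$ is not a torus, together with possibly all points, where it is a skyscraper-type sheaf $\bigoplus_x i_{x*}(T(K_x^{sh})/T(\mathcal O_x^{sh}))$. Such a sheaf is a direct sum of pushforwards from points with algebraically closed residue field, so it has vanishing cohomology in degrees $\ge1$. The long exact sequence
\[H^{i-1}(\bar X,Q)\to H^i(\bar X,T)\to H^i(\bar X,j_*T_\eta)\]
then gives $H^i(\bar X,T)=0$ for $i\ge2$, since the left term vanishes for $i-1\ge1$ and the right term vanishes by the previous step. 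This is exactly why the statement only claims $i\ge2$: $H^1$ can receive a contribution from $H^0(Q)$.

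The main obstacle is the identification of $Q$ as a sum of skyscraper sheaves at closed points. This requires checking that at every geometric point the stalk map $T(\mathcal O^{sh}_{\bar x})\to T(K_{\bar x})$ is the relevant comparison, and that $Q$ vanishes at the generic point. Both follow from $T$ being locally of finite presentation, since sections over the generic point spread out over an open set. To be safe we would first argue étale-locally, using that sections of a finitely presented scheme commute with filtered limits.
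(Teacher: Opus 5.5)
Your argument is correct and is essentially the paper's proof. The paper likewise compares $T$ with the pushforward from the generic point, via the triangle $T\to Rj_*j^*T\to\Delta$. It kills $R^qj_*j^*T$ for $q\ge 1$ using Proposition~\ref{torus-field-trivial}, applied to the function fields of \'etale $U\to X_{\bar k}$, so Tsen suffices and you do not need Lang. It then uses SGA4 IX 4.1 to see that what remains is a sum of skyscrapers at closed points with no higher cohomology.

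The only superfluous part is the extra smooth/separated hypothesis you impose to get injectivity of $T\to j_*T_\eta$. The kernel of this map is also supported on closed points and so has vanishing higher cohomology; in the paper it is absorbed into $\mathcal H^{-1}(\Delta)$. Hence the lemma holds as stated, without that assumption.
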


\begin{proof}
	Proposition \ref{torus-field-trivial} and the standard arguments in \cite[IX 4]{SGA43} imply the claim. More precisely, let $\eta$ be the generic point of $X_{\bar k}$, $j:\eta\to X_{\bar k}$ be the natural map, $T\to Rj_*j^*T$ be the canonical map, and 
	\begin{equation}
		T\to Rj_*j^*T\to \Delta\to \label{torus-field-trivial:seq}
	\end{equation}
	be a distinguished triangle. By \cite[IX 4.1]{SGA43} each $\mathcal H^i (\Delta)$ is a skyscraper sheaf. For any étale map $U\to X_{\bar k}$, by Proposition \ref{torus-field-trivial} we have $H^i_{\et}(\eta\times_{ X_{\bar k}} U, j^*T)=0$ for $i\geq 1$. This implies that $R^ij_*j^*T$ vanishes for $i\geq 1$. Thus for $i\geq 1$, $\mathcal H^i (\Delta)=0$, hence $H^i_{\et}(X,\Delta)=0$. From the long exact sequence associated to (\ref{torus-field-trivial:seq}) we conclude that $H^i_{\et}(X_{\bar k},T)=0$ for $i\geq 2$. 
\end{proof}

\begin{lemma}\label{vanishing-H2}
    Let $(A,m)$ be an excellent strict Henselian local ring, let $X\to \Spec A$ be a proper smooth curve and let $T$ be a torus on $X$. Then $H^2_{\et}(X,T)$ is trivial. 
\end{lemma}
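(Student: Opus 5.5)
We plan to reduce to the closed fibre via proper base change and then apply Lemma~\ref{torus-curve-trivial}. Let $s=\Spec A/m$ be the closed point; its residue field is separably closed. Let $X_s$ be the closed fibre, a smooth proper curve over a separably closed field. Since $A$ is strictly Henselian, a torus over $A$-schemes is split étale locally, and we would like to compare $H^2_{\et}(X,T)$ with $H^2_{\et}(X_s,T|_{X_s})$. Proper base change does not apply directly to the non-torsion sheaf $T$. We therefore use a Kummer-type argument: $T$ is étale locally $\Gb_m^r$, and we study the torsion and divisible parts of $H^2$ separately.

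\textbf{Torsion part.} For each $n$ invertible on $A$, the Kummer sequence $0\to T[n]\to T\xrightarrow{n} T\to 0$ is exact in the étale topology. This gives the exact sequence
\[
0\to H^1(X,T)/n\to H^2(X,T[n])\to H^2(X,T)[n]\to 0.
\]
By proper base change for the finite étale sheaf $T[n]$ over the strictly Henselian base, $H^2(X,T[n])\cong H^2(X_s,T[n])$. The same sequence on $X_s$ together with Lemma~\ref{torus-curve-trivial} (applied after passing to a separable closure, which here is already given) shows that $H^2(X_s,T[n])=H^1(X_s,T)/n$. The surjection $H^1(X,T)/n\to H^1(X_s,T)/n$ would then follow from the surjectivity of $H^1(X,T)\to H^1(X_s,T)$. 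This should reduce to the split case $\Pic(X)\to\Pic(X_s)$, which is surjective by Grothendieck existence and formal smoothness of the Picard functor (deformations of line bundles on curves are unobstructed because $H^2(X_s,\Oc)=0$), together with Artin approximation using excellence. The twisted case is handled by descending along a finite étale cover splitting $T$, via Hochschild--Serre. We conclude that $H^2(X,T)[n]=0$. For $p$-torsion in mixed or equal characteristic $p$, we would use the flat (fppf) Kummer sequence and flat cohomology, or argue via Brauer-group results for regular schemes.

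\textbf{Torsion-ness.} It remains to show that $H^2(X,T)$ is torsion. Since $X$ is regular and $T$ is a torus, we choose a finite étale cover $X'\to X$ of degree $d$ splitting $T$. By a norm/restriction argument, $d$ times any class in $H^2(X,T)$ comes from $H^2(X',\Gb_m^r)=\mathrm{Br}'(X')^r$. This group is torsion because $X'$ is regular (Grothendieck, Brauer II). Hence $H^2(X,T)$ is torsion.

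\textbf{Main obstacle.} The delicate step is controlling $p$-torsion when $p=\mathrm{char}(A/m)>0$ and $p$ is not invertible on $A$. Here étale proper base change fails. We would try to treat this using fppf cohomology of $\mu_{p^n}$-type group schemes, where proper base change does not hold either. Alternatively, we would compare via the Leray spectral sequence for $f\colon X\to\Spec A$: since $A$ is strictly Henselian, $H^2(X,T)=H^0(\Spec A,R^2f_*T)$, and we would show the stalk vanishes using the generic-point method of Lemma~\ref{torus-curve-trivial} (restriction to the generic point of $X$, skyscraper cokernels supported in codimension $1$, and the vanishing of $H^{\ge1}$ of $T$ over the function fields involved, for which the Tsen-type input is the hardest part).
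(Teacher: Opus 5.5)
Your argument only reaches the prime-to-$p$ part of $H^2_{\et}(X,T)$, where $p$ is the residue characteristic of $A$. The $p$-primary part, which you yourself flag as the main obstacle, is left open, and none of the fallbacks you list closes it.

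\begin{itemize}
\item For $p$ not invertible on $A$, the Kummer sequence for $p^n$ is not exact on the étale site and $T[p^n]$ is not étale. So proper base change is unavailable, and, as you note, it also fails for fppf cohomology.
\item The Leray reformulation is vacuous. Since $A$ is strictly Henselian, $H^0(\Spec A,R^2f_*T)$ is the stalk $(R^2f_*T)_{\bar s}$, which is $H^2_{\et}(X,T)$ itself.
\item The generic-point method of Lemma~\ref{torus-curve-trivial} does not transfer. $X$ now has dimension $\dim A+1$, and its function field is a function field over $\mathrm{Frac}(A)$, which is far from separably closed. Already for $A=\bar k[[t]]$ and $X=\mathbb{P}^1_A$ (characteristic $\neq 2$), the quaternion algebra $(t,x)$ over $\bar k((t))(x)$ is nonsplit: its residue at $x=0$ is the class of $t$, which is not a square in $\bar k((t))$. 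So the ``Tsen-type input'' you hope for is not merely hard, it is false.
\end{itemize}

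This is not a marginal case. In Corollary~\ref{isomorphism-H2} the lemma is used to compute stalks of $R^2p_*Z^0$, i.e.\ for strict henselisations of local rings of smooth $k$-schemes. These have equal characteristic $p$ whenever $k$ does.

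A secondary issue: your torsion-ness step uses regularity of $X$, i.e.\ of $A$. This is not among the hypotheses, though it holds in that application. Also, the ``Hochschild--Serre descent'' for non-split $T$ is left unexplained.

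The missing idea is the one the paper uses: avoid Kummer theory altogether and deform along the thickenings $X_n=X\times_A A/m^{n+1}$.

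\begin{itemize}
\item The kernel of $T|_{X_n}\to T|_{X_{n-1}}$ is the coherent sheaf $m^n/m^{n+1}\otimes\Lie T$. Its $H^2$ and $H^3$ vanish because $X_n$ is one-dimensional.
\item Together with $H^2_{\et}(X_0,T)=0$ from Lemma~\ref{torus-curve-trivial}, this gives $H^2_{\et}(X_n,T)=0$ for all $n$. It also gives surjectivity of $H^1_{\et}(X_{n+1},T)\to H^1_{\et}(X_n,T)$. Both hold uniformly at all primes, including $p$.
\item Reading $H^2$ as classes of $T$-gerbes, any $T$-gerbe $Y$ on $X$ therefore admits a compatible system of sections over the $X_n$.
\item For complete $A$, these sections algebraise to a section over $X$ by Bhatt's Tannakian algebraisation theorem for stacks with affine diagonal.
\item The general case follows from Popescu's theorem ($\widehat A=\varinjlim A_j$ with $A_j$ smooth over $A$). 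The Henselian property then gives an $A$-algebra retraction $A_j\to A$.
\end{itemize}

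You already need exactly this deformation-plus-approximation input for your surjectivity $H^1_{\et}(X,T)\to H^1_{\et}(X_s,T)$, and it handles non-split $T$ directly. Applying it one degree higher, to gerbes, makes the torsion/divisible decomposition, the regularity assumption and the case distinction at $p$ unnecessary.
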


\begin{proof}
    By \cite[III. Theorem 2.17]{MEC}, we may identify $H^2_{\et}(X,T)$ with the \v Cech cohomology group $\check{H}^2_{\et}(X,T)$. Thus it suffices to show that any $T$-gerbe on $X$ is trivial.

    First, assume that $A$ is complete. Let $Y$ be a $T$-gerbe over $X$. 
    For $n\geq 0$, define $A_n=A/m^{n+1}$, and let $X_n=X\times_A A_n$, $Y_n=Y\times_A A_n$.
    Note that $Y/A$ has an affine diagonal. By \cite[Theorem 7.4]{bhatt2017tannaka} we have an equivalence of categories 
    \[\mathrm{Map}_A(X,Y)\cong \varprojlim \mathrm{Map}_{A_n}(X_n,Y_n), \quad \mathrm{Map}_A(X,X)\cong \varprojlim \mathrm{Map}_{A_n}(X_n,X_n) .\]
    We conclude that \[\mathrm{Map}_X(X,Y)\cong \varprojlim \mathrm{Map}_{X_n}(X_n,Y_n).\]
    For each $n$, there is an exact sequence of \'etale sheaves on $X_{n}$
    \[0\to m^n/m^{n+1}\otimes_A  \Lie_{X} T\to  T_{n}\to T_{n-1}\to 0.\]
    Since $\Lie_{X} T$ is a coherent sheaf, $H^i_{\et}(X_n, m^n/m^{n+1}\otimes_A  \Lie_{X} T)\cong H^i_{\mathrm{Zar}}(X_n, m^n/m^{n+1}\otimes_A  \Lie_{X} T)$ vanishes for $i\geq 1$. Combining this with Lemma \ref{torus-curve-trivial}, which implies $H^2_{\et}(X_0,T_0)=0$, we conclude that 
    the natural maps $H^1_{\et}(X_{n+1},T_{n+1})\to H^1_{\et}(X_n,T_n)$ are surjective, 
    and each $H^2_{\et}(X_n,T_n)$ vanishes. In particular, $\mathrm{Map}_{X_n}(X_n,Y_n)$ is non-empty and $\mathrm{Map}_{X_{n+1}}(X_{n+1},Y_{n+1})\to \mathrm{Map}_{X_n}(X_n,Y_n)$ is surjective. Therefore $\mathrm{Map}_X(X,Y)$ is non-empty, which implies that $Y$ is trivial.

    In general, by Popescu’s smoothing theorem \cite[\href{https://stacks.math.columbia.edu/tag/07GC}{Tag 07GC}]{stacks-project}, there is a filtered direct system $\{A_j\}_{j\in J}$ of smooth $A$-algebras such that $\widehat{A}\cong \varinjlim_{j\in J} A_j$. Let $Y$ be a $T$-gerbe on $X$, by the previous paragraph $Y_{\widehat{A}}$ is trivial, so we may find $j$ such that $Y_{A_j}$ is trivial. 
    Since $A$ is strict Henselian, we may find a section $A_j\to A$. Pulling back along this section we conclude that $Y$ is trivial.
\end{proof}

\begin{corollary}\label{isomorphism-H2}
   Let $M$ be a smooth Deligne-Mumford stack over $k$, and let $p$ be the projection $X\times_k M\to M$. Then $R^2p_{*,\et}Z^0$ is trivial.  
   Moreover, $Z\to \pi_0(Z)$ induces an isomorphism
   \[R^2p_{*,\et}Z\cong R^2p_{*,\et}\pi_0(Z)\cong H^2_{\et}(X, \pi_0(Z)).\]
   Here we regard $H^2_{\et}(X, Z)\cong H^2_{\et}(X, \pi_0(Z))$ as a constant sheaf on $M$.
\end{corollary}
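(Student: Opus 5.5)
We will compute the stalks of $R^2p_{*,\et}Z^0$ and $R^2p_{*,\et}Z$ at geometric points of $M$ and reduce everything to Lemma \ref{vanishing-H2}. Here $Z^0$ is the neutral component of $Z$ and $\pi_0(Z)=Z/Z^0$.

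\textbf{Stalks.} Since $M$ is a smooth Deligne--Mumford stack, its \'etale topology is generated by schemes $\Spec R\to M$ with $R$ smooth over $k$. The stalk at a geometric point $\bar m$ is therefore computed on $X\times_k \Spec A$, where $A=\mathcal{O}^{sh}_{M,\bar m}$ is the strict henselization. The ring $A$ is excellent and strictly henselian. Since $p$ is proper, proper base change for torsion coefficients, or a limit argument for smooth group schemes, gives
\[(R^2p_*\mathcal F)_{\bar m}\cong H^2_{\et}(X_A,\mathcal F)\qquad\text{for }\mathcal F=Z^0,\,Z,\,\pi_0(Z),\]
with $X_A=X\times_k\Spec A$. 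Strictly, one should invoke limit compatibility of étale cohomology, $\varinjlim H^2(X_{R_i},\mathcal F)=H^2(X_A,\mathcal F)$, which holds because $X$ is qcqs and $\mathcal F$ is representable by a finitely presented group scheme.

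\textbf{Vanishing for $Z^0$.} By our standing assumptions, $H$ is reductive and the characteristic is large, so $Z^0$ is a torus over $X$ and $Z$ is of multiplicative type. Lemma \ref{vanishing-H2} applied to $X_A\to\Spec A$ gives $H^2_{\et}(X_A,Z^0)=0$, so $R^2p_*Z^0=0$. This step contains the real content, and it was already done in Lemma \ref{vanishing-H2}. The only remaining care is to check that base-changing $Z^0$ to $X_A$ still gives a torus, which is clear.

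\textbf{The isomorphisms.} Apply $Rp_*$ to the exact sequence $0\to Z^0\to Z\to\pi_0(Z)\to 0$. This gives
\[R^2p_*Z^0\to R^2p_*Z\to R^2p_*\pi_0(Z)\to R^3p_*Z^0 .\]
For injectivity, $R^2p_*Z^0=0$ by the previous step. For surjectivity we need $R^3p_*Z^0=0$. I would obtain this stalkwise, $H^3(X_A,Z^0)=0$, as follows. $Z^0$ is a torus split by a finite étale cover, and étale cohomology of $X_A$ has the cohomological dimension bounds of a relative curve over a strictly henselian base. Concretely, reduce via the Kummer sequences and a limit argument to the closed fibre, using proper base change for the torsion parts, so that Lemma \ref{torus-curve-trivial} ($i\ge2$) applies. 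This is the step I expect to be the main obstacle, since Lemma \ref{vanishing-H2} was only stated for $H^2$. If the direct argument is troublesome, I would instead rerun the deformation argument of Lemma \ref{vanishing-H2} verbatim for $H^3$: the coherent pieces have no higher cohomology, and $H^3(X_0,T_0)=0$ by Lemma \ref{torus-curve-trivial}.

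\textbf{Constancy.} $\pi_0(Z)$ is a finite étale group scheme whose order is invertible in $k$, by the characteristic assumption. Smooth and proper base change then shows that $R^2p_*\pi_0(Z)$ is locally constant with stalks $H^2(X_{\bar k},\pi_0(Z))$. Since the sheaf is pulled back from $\Spec k$, it is the constant sheaf attached to $H^2_{\et}(X,\pi_0(Z))$, understood as the Galois module over $k$. Finally, the identification $H^2(X,Z)\cong H^2(X,\pi_0(Z))$ is the special case $M=\Spec\bar k$ of the same argument.
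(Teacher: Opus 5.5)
Your reduction to stalks at strict henselizations $A$, the vanishing $H^2_{\et}(X_A,Z^0)=0$ via Lemma \ref{vanishing-H2}, the resulting injectivity of $R^2p_{*,\et}Z\to R^2p_{*,\et}\pi_0(Z)$, and the constancy of $R^2p_{*,\et}\pi_0(Z)$ by proper base change all match the paper. The gap is surjectivity.

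You reduce it to $R^3p_{*,\et}Z^0=0$, i.e.\ $H^3_{\et}(X_A,Z^0)=0$ over a strictly henselian base of positive dimension. Neither of your routes proves this.

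\emph{The Kummer route} only controls prime-to-$p$ torsion. For $n$ invertible, $H^3_{\et}(X_A,Z^0)[n]$ is a quotient of $H^3_{\et}(X_A,Z^0[n])\cong H^3_{\et}(X_{\bar k},Z^0[n])=0$. It does not show that $H^3_{\et}(X_A,Z^0)$ is torsion to begin with. In characteristic $p>0$ it also says nothing about the $p$-primary part: $Z^0[p]$ is infinitesimal and multiplication by $p$ on $Z^0$ is not surjective in the \'etale topology, so there is no reduction to finite \'etale coefficients on the closed fibre.

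\emph{The ``verbatim'' deformation route} breaks at its decisive step. The proof of Lemma \ref{vanishing-H2} is not a formal d\'evissage. It uses that an $H^2$-class is a gerbe $Y$, an algebraic stack with affine diagonal, so that Bhatt's theorem $\mathrm{Map}(X,Y)\cong\varprojlim\mathrm{Map}(X_n,Y_n)$ turns compatible formal trivializations into an actual one. You can indeed get $H^3_{\et}(X_n,Z^0)=0$ for every $n$. But an $H^3$-class is a $2$-gerbe, Bhatt's theorem does not apply to it, and you give no substitute, so nothing yields $H^3_{\et}(X_{\widehat A},Z^0)=0$.

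The missing idea is that you never need $H^3$ over $A$: surjectivity follows by pulling classes back from $X$. Let $p'\colon X\to\Spec k$ and $q'\colon M\to\Spec k$ be the structure maps, and let $f\colon q'^*R^2p'_{*,\et}Z\to R^2p_{*,\et}Z$ be the base change map. Its source is the constant sheaf $H^2_{\et}(X,Z)$ of the statement, with stalk $H^2_{\et}(X_{\bar k},Z)$.

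Over $\bar k$, the map $H^2_{\et}(X_{\bar k},Z)\to H^2_{\et}(X_{\bar k},\pi_0(Z))$ is bijective because $H^i_{\et}(X_{\bar k},Z^0)=0$ for $i=2,3$ by Lemma \ref{torus-curve-trivial}. This is your own ``special case $M=\Spec\bar k$'', which needs $H^3$ only over a field.

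By naturality of base change in $Z\to\pi_0(Z)$, the composite $H^2_{\et}(X,Z)\xrightarrow{f}R^2p_{*,\et}Z\hookrightarrow R^2p_{*,\et}\pi_0(Z)\cong H^2_{\et}(X,\pi_0(Z))\cong H^2_{\et}(X,Z)$ is the identity. Hence the injection $R^2p_{*,\et}Z\hookrightarrow R^2p_{*,\et}\pi_0(Z)$ is surjective. This is how the paper concludes, and it uses only ingredients you already have.
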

\begin{proof}
    We may assume $M$ is a smooth scheme over $k$. Then the first claim follows from Lemma \ref{vanishing-H2}. As a corollary, $Z\to \pi_0(Z)$ induces an injection
   \[R^2p_{*,\et}Z\hookrightarrow R^2p_{*,\et}\pi_0(Z)\cong H^2_{\et}(X, \pi_0(Z)).\]
   Here $\pi_0(Z)$ is a torsion sheaf, so by the proper base change theorem $R^2p_{*,\et}\pi_0(Z)\cong H^2_{\et}(X, \pi_0(Z))$. 
   Note that, by Lemma \ref{torus-curve-trivial}, we have $H^2_{\et}(X, Z)\cong H^2_{\et}(X, \pi_0(Z))$. We have a Cartesian diagram
    \[
\xymatrix{
X\times_k M\ \ar[r]_q \ar[d]_p &  X \ar[d]^{p'} \\
M \ar[r]_{q'} & \Spec k,
}
\]
This induces the base change map $$f\colon H^2_{\et}(X, Z)\cong q'^*Rp'_{*,\et}Z_X\to Rp_{*,\et}q^*Z_X\to Rp_{*,\et}Z_{X\times_k M}.$$ 
By the functoriality of the base change map, we have the following commutative diagram
\[
\xymatrix{
{H^2_{\et}(X, Z)} \ar[r]^-\sim \ar[d]_f & {H^2_{\et}(X, \pi_0(Z))} \ar[d]^\sim \\
R^2p_{*,\et}Z \ar[r]                           & R^2p_{*,\et}\pi_0(Z).
}
\]
This implies that the composition
\[H^2_{\et}(X, Z)\xrightarrow{f} R^2p_{*,\et}Z\hookrightarrow  R^2p_{*,\et}\pi_0(Z)\cong H^2_{\et}(X, \pi_0(Z))\cong H^2_{\et}(X, Z)\]
is the identity, so $R^2p_{*,\et}Z\hookrightarrow  H^2_{\et}(X, Z)$ is an isomorphism.
\end{proof}

\begin{construction}
    Let $(E,\theta)$ be the universal $\overline{H}$-Higgs bundle on $X\times_k \Mb_{\overline{H}}$. We may associate it to the $Z$-gerbe $[E/H]$, which defines a global section $\beta^{\mathrm{univ}}\in \Gamma(\Mb_{\overline{H}}, R^2p_{*,\et}Z)\cong \Gamma(\Mb_{\overline{H}}, H^2_{\et}(X, \pi_0(Z)))$. For any $\beta\in H^2_{\et}(X, \pi_0(Z))$, let $\Mb_{\overline{H}}^\beta$ be the largest open subset of $\Mb_{\overline{H}}$ on which $\beta=\beta^{\mathrm{univ}}$. We have $\Mb_{\overline{H}}=\bigsqcup_{\beta\in H^2_{\et}(X, \pi_0(Z))}\Mb_{\overline{H}}^\beta$.
\end{construction}

\begin{lemma}\label{lemma:sect-torsor}
The morphism $F:\Mb_{H}^\beta \to \Mb_{\overline{H}}^\beta\times_{{\A}_{\overline{H}}}{\A}_{H}$ is a $\Sect_X(BZ(H))$-torsor.
\end{lemma}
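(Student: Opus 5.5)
We would prove that $F$ is a torsor by exhibiting an action of the group stack $\Sect_X(BZ(H))$ (that is, $Z$-torsors on $X$) on $\Mb_H^\beta$ over the target, and then checking that this action is simply transitive fppf-locally on the base. For the action: given a $Z$-torsor $L$ and a $\beta$-twisted Higgs bundle $(\Vv,\theta)$, we twist $\Vv$ by $L$ using $Z\times H\to H$. In cocycle terms this multiplies $h_{ij}$ by the cocycle $z_{ij}$ of $L$. Conceptually, Lemma \ref{TwPushout} gives $\Tw_H^\beta\cong B_UH\times^{B_UZ}\beta$, on which $B_UZ$ acts through the first factor. The Higgs field is unchanged, since $Z$ acts trivially on $\mathfrak h$ via the adjoint action. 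Hence the induced $\overline H$-Higgs bundle and the $H$-characteristic $\chi(\theta)$ are preserved, so the action is over $\Mb_{\overline H}^\beta\times_{\A_{\overline H}}\A_H$.

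Next we show that the action map $\Sect_X(BZ)\times\Mb_H^\beta\to\Mb_H^\beta\times_{\text{target}}\Mb_H^\beta$ is an equivalence. Let $(\Vv,\theta)$ and $(\Vv',\theta')$ lie over the same point $((E,\bar\theta),a)$. Both $\Vv$ and $\Vv'$ are lifts of the same $\overline H$-torsor $E$ together with an identification $[E/H]\cong\beta$. By Definition \ref{defi:twisted-bundle}, the groupoid of such sections is the fibre of $\Tw_H^\beta\to B\overline H$, which is a torsor under $\Hom(U,BZ)$ for the $B_UZ$-action (as in the proof of Lemma \ref{TwPushout}). So $\Vv'\cong\Vv\otimes L$ for a $Z$-torsor $L$, unique up to unique isomorphism.

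On Higgs fields, we need the map $[\mathfrak h_D/H]^\beta\to[\mathfrak h_D/\overline H]\times_{(\mathfrak c_{\bar h})_D}(\mathfrak c_h)_D$ to be compatible. Under the standing characteristic assumption we have $\mathfrak h\to\bar{\mathfrak h}$. The data $(\bar\theta,a)$ should determine $\theta$: the map $\mathfrak h\to\bar{\mathfrak h}\times_{\mathfrak c_{\bar h}}\mathfrak c_h$ is an isomorphism, because $\mathfrak h=\mathfrak z\oplus[\mathfrak h,\mathfrak h]$ and $\mathfrak c_h=\mathfrak z\times\mathfrak c_{[\mathfrak h,\mathfrak h]}$, up to the isogeny identifications allowed by $p>2h$. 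Thus the Higgs fields match uniquely once the torsors are identified.

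This gives an $H$-equivariant isomorphism $\mathfrak h\cong\bar{\mathfrak h}\times_{\mathfrak c_{\bar h}}\mathfrak c_h$, which is exactly what makes the action free and transitive on the Higgs parts. We expect establishing it rigorously to be the main obstacle: in small characteristic or for non-reductive $H$ the Lie algebras of $H$ and $\overline H$ may differ in a nontrivial way, so we would verify it étale locally on $X$, where $H$ is split reductive.

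Finally, we need local nonemptiness of the fibres, i.e. surjectivity of $F$ fppf-locally on the target. Given $(E,\bar\theta,a)$ over a test scheme $S$, the class of $[E/H]$ equals $\beta$ in $R^2p_*Z$ by the definition of $\Mb_{\overline H}^\beta$ and Corollary \ref{isomorphism-H2}. So after an étale cover of $S$ we get an isomorphism $[E/H]\cong\beta$, i.e. a lift $\Vv$. Combined with the unique lift of the Higgs field, this shows $F$ is locally surjective, and hence a $\Sect_X(BZ(H))$-torsor.
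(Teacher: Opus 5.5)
Your proposal is correct and follows essentially the same route as the paper. A point of the target is an $\overline H$-torsor $E$ with a section of $E\times^{\overline H}\mathfrak h_D$; this rests on the isomorphism $\mathfrak h\cong\bar{\mathfrak h}\times_{\mathfrak c_{\bar h}}\mathfrak c_h$, which you verify and the paper uses implicitly. Lifts of $E$ to $\Tw_H^\beta$ form a torsor under $Z$-torsors on $X\times S$, and lifts exist \'etale-locally on $S$ exactly when $\beta=\beta^{\mathrm{univ}}$; you just make explicit the steps the paper leaves terse.
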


\begin{proof}
For any $k$-scheme $S$, a section in $(\Mb_{\overline{H}}^\beta\times_{{\A}_{\overline{H}}}{\A}_{H})(S)$ can be identified with a $\overline{H}$-torsor $E$ and a section $\theta\in \Gamma(X\times_k S, E\times^{\overline{H}} \mathfrak{h}_D)$. 
This section $(E, \theta)$ lies in the image of $\Mb_{H}^\beta(S)$ if and only if the associated $Z$-gerbe $[E/H]$ on $X\times S$ is isomorphic to $\beta$. Thus, \'etale-locally on $S$ we may find a lifting of $(E,\theta)$ if and only if $\beta=\beta^{\mathrm{univ}}$ holds on $S$. 
The claim then follows from the observation that the groupoid of lifts of a given map $S \to \Mb_{\overline{H}}$ is a torsor under the commutative group stack $\Sect_X(BZ(H))$.
\end{proof}

Recall from \cite[Séction 4.5] {Ng10} the definition of the Prym $\Pb_H$ over $\A_H$ together with its canonical action on $\Mb_H$ relative to $\A_H$, and analogously for $\bar H$. The action of $\Pb_{\overline{H}}$ on $\Mb_{\overline{H}}$ permutes the decomposition $\Mb_{\overline{H}}=\bigsqcup_{\beta\in H^2_{\et}(X, \pi_0(Z))}\Mb_{\overline{H}}^\beta$ as follows:

For any section $a \in \A_H(k)$ with image $\bar a$ in $A_{\overline{H}}(k)$, the associated regular centralizer schemes on $X$ fit into the following exact sequence:
\begin{equation*}
    1 \to Z(H)_{X} \to J_{H,a} \to J_{\overline{H},\bar a} \to 1
\end{equation*}
Hence to any section $p \in \Pb_{\overline{H},\bar a}(k)$ we may associate a $Z(H)_{X}$-gerbe $\delta(p)$ over $X$ measuring the obstruction to lifting $p$ to $\Pb_{H,a}(k)$. From the definition of the action of $\Pb_{\overline H}$ on $\Mb_{\overline H}$ one sees:
\begin{lemma}
    For any $a \in \A_H(k)$ and $\beta\in H^2_{\et}(X, \pi_0(Z))$, any $p \in \Pb_{\overline{H},\bar a}(k)$ sends $\Mb_{\overline{H},\bar a}^\beta$ to $\Mb_{\overline{H},\bar a}^{\beta+\delta(p)}$. 
\end{lemma}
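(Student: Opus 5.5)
We have to show that if $(E,\theta)\in \Mb_{\overline{H},\bar a}$ has associated $Z$-gerbe $[E/H]$ of class $\beta$, then $p\cdot(E,\theta)$ has gerbe class $\beta+\delta(p)$. The approach is to reduce everything to a statement about gerbes of liftings and use additivity of the obstruction class for a central extension.

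\textbf{The action.} Recall Ngô's construction: a $J_{\overline{H},\bar a}$-torsor $p$ acts on $(E,\theta)$ via the canonical homomorphism $\chi^*J_{\overline{H}}\to \Aut(E,\theta)$. Using it, $p\cdot(E,\theta)=(p\times^{J_{\overline{H},\bar a}}E,\theta)$. Here $J_{\overline{H},\bar a}$ acts on $E$ through the automorphism group scheme $\Aut(E,\theta)\subset \Aut(E)=E\times^{\overline{H}}\overline{H}$, with $\overline{H}$ acting by conjugation.

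\textbf{Gerbes of liftings.} For a $\overline{H}$-torsor $E$, the gerbe $[E/H]$ is the gerbe of liftings of $E$ to an $H$-torsor along $1\to Z\to H\to\overline{H}\to 1$. Likewise $\delta(p)$ is the gerbe of liftings of $p$ along $1\to Z\to J_{H,a}\to J_{\overline{H},\bar a}\to 1$.

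\textbf{Key step.} We construct a morphism of stacks over $X$,
\[
[E/H]\times[p/J_{H,a}]\longrightarrow [(p\times^{J_{\overline{H},\bar a}}E)/H],
\]
sending a pair of local liftings $(\tilde E,\tilde p)$ to $\tilde p\times^{J_{H,a}}\tilde E$. To make sense of this, $J_{H,a}$ must act on the lift $\tilde E$ compatibly with the action of $J_{\overline{H},\bar a}$ on $E$. Étale locally, $(E,\theta)$ lifts to an $H$-Higgs bundle $(\tilde E,\tilde\theta)$, since locally the class $\beta$ vanishes and $\chi(\theta)$ lifts to $a$. Then $J_{H,a}$ acts on $(\tilde E,\tilde\theta)$ by Ngô's canonical map $\chi^*J_H\to \Aut(\tilde E,\tilde\theta)$, and this action lies over the $J_{\overline{H},\bar a}$-action on $E$.

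The crucial check is the behaviour of the central part. The subgroup $Z\subset J_{H,a}$ acts on $\tilde E$ through $Z\subset H$, which is the central action; this follows since $J_H\to I_H$ restricts to the identity on $Z$. Consequently, the pairing is $Z\times Z$-equivariant through the multiplication map $Z\times Z\to Z$. It therefore descends to a morphism of $Z$-gerbes
\[
[E/H]\wedge^{Z}\delta(p)\to[(p\cdot E)/H],
\]
which is automatically an isomorphism. Passing to classes in $H^2_{\et}(X,Z)\cong H^2_{\et}(X,\pi_0(Z))$ gives $[p\cdot E/H]=\beta+\delta(p)$.

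\textbf{Main obstacle.} The delicate point is making the local construction canonical, i.e.\ independent of the chosen lift $\tilde E$. This is what allows the map to be defined on the gerbe $[E/H]$ rather than only on local charts. The cleanest way is to work with $J_{H,a}$-equivariance at the level of the stack $[\mathfrak h_D/H]$ versus $[\mathfrak h_D/\overline{H}]$, using the identification $[\mathfrak h_D/H]\to[\mathfrak h_D/\overline{H}]$ as a $BZ$-gerbe. Ngô's action of $BJ$ on $[\mathfrak h_D^{reg}/H]$ over $\mathfrak c_h$ is compatible with the action of $BJ_{\overline{H}}$ on $[\mathfrak h_D^{reg}/\overline{H}]$ via $BJ_H\to BJ_{\overline{H}}$. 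The central $BZ$ acts by the gerbe structure, and this compatibility is exactly the statement at the universal level. Pulling back along $(E,\theta)$ then yields the lemma. Over the non-regular locus, we use the extension of the action to all of $\mathfrak h_D$ via $\chi^*J\to I$, as in Ngô.
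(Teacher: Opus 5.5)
Your proposal is correct and takes the route the paper intends: the paper gives no argument beyond saying the lemma follows ``from the definition of the action''. Your identification of the lifting gerbe of $p\times^{J_{\overline H,\bar a}}E$ with the contracted product $[E/H]\wedge^{Z}\delta(p)$, using that $Z\subset J_{H,a}$ acts on any lift through the central action, is exactly the unpacking the paper leaves implicit. The ``main obstacle'' you flag is harmless. The $J_{H,a}$-action on a lift is canonically determined by the lift itself, with $\tilde\theta$ induced by $\theta$ via $E\times^{\overline H}\mathfrak h_D=\tilde E\times^{H}\mathfrak h_D$. It is also functorial in isomorphisms of lifts, so your pairing is automatically a morphism of stacks.
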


Recall from \cite[Séction 4.5] {Ng10} the open locus $\A^\heartsuit_H \subset \A_H$ corresponding roughly speaking to reduced cameral curves and inside the anisotropic locus \cite[Séction 6.1]{Ng10} $\A^{ani}_H\subset \A^\heartsuit_H$.  

For any $a \in \A^\heartsuit_H(k)$, the regular centralizer $J_a$ on $X_{\bar k}$ is generically a torus, which implies that $H^2_{\et}(X_{\bar k},J_a)$ is trivial by Lemma \ref{torus-curve-trivial}. Hence for such an $a$, the above implies that $\Pb_{\overline H,\bar a}$ permutes the subspaces $\Mb_{\overline{H},\bar a}^\beta$ transitively.

\begin{definition}
    We introduce the following notation:
    \begin{enumerate}
        \item $\Mb_H^{\beta,\heartsuit} \defeq \Mb_H^{\beta} \times_{\A_H} \A_H^\heartsuit $
        \item $\Mb_H^{\beta,\ani} \defeq \Mb_H^{\beta} \times_{\A_H} \A_H^{\ani}$
        \item $\Mb_H^{\beta,\reg} \subset \Mb_H^{\beta}$ is the open substack whose sections are those which factor through the open substack $[h_D/H]^{\beta,\reg} \subset [h_D/H]^{\beta}$ defined by the Cartesian diagram
        \begin{equation*}
            \xymatrix{
[h_D/H]^{\beta,reg} \ar[r] \ar[d] & X  \ar[d]_{\beta} \\
 [h^{\reg}_D/\overline{H}] \ar[r] &  B_X^2Z,         
            }
                    \end{equation*}

            where $h^{\reg}_D \subset h_D$ is the regular locus.
            \item For any of these objects $\Mb_H^{\beta,?}$, we denote the associated $Z$-rigidification by $\Mc_H^{\beta,?}$

    \end{enumerate}
\end{definition}
\begin{proposition}\label{heart}
    \begin{enumerate}
        \item The stack $\Mb_H^{\beta}$ is algebraic.
        \item The stack $\Mb_H^{\beta,\ani}$ is a smooth DM-stack which is proper over ${\A}_{H}^{\ani}$. 

        \item If $\deg(D) \geq 2g$ is even, then the stack $\Mb_{H,a}^{\beta,\reg}$ is dense in the Hitchin fibre for every $a \in \A^{\heartsuit}$. 

        \item The stack $\Mb_H^{\beta,\reg}$ is a $\Pb_{H}$-torsor over $\A^{\heartsuit}_{H}$. 
    \end{enumerate}
\end{proposition}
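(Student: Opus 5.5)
The plan is to reduce every assertion to the corresponding classical statement for $\overline{H}$-Higgs bundles (and for $H$-Higgs bundles, due to Ng\^o), using Lemma \ref{lemma:sect-torsor}. That lemma says $F\colon \Mb_H^\beta \to \Mb_{\overline{H}}^\beta\times_{\A_{\overline{H}}}\A_H$ is a torsor under $\Sect_X(BZ)$.

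For (i), I would first check that $\Sect_X(BZ)$ is an algebraic group stack. Since $Z$ is a smooth commutative group scheme over the proper curve $X$, this stack is an extension of a component group by $\Pic$-type stacks, so it is algebraic. The base $\Mb_{\overline{H}}^\beta\times_{\A_{\overline{H}}}\A_H$ is algebraic because $\Mb_{\overline{H}}^\beta$ is open in the algebraic stack $\Mb_{\overline{H}}$, and $\A_H\to\A_{\overline{H}}$ is a morphism of affine spaces (or of finite type schemes). A torsor under an algebraic group stack over an algebraic stack is algebraic, because it is fppf-locally a product.

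For (ii), I would work étale locally on the base, where $F$ becomes a product with $\Sect_X(BZ)$. Smoothness then follows from the deformation theory of twisted Higgs bundles. The twisting data lives in $Z$, which acts trivially on $\mathfrak h$, so the deformation complex is the usual $[\mathrm{ad}(V)\to \mathrm{ad}(V)\otimes D]$ built from the underlying $\overline{H}$-torsor, and it agrees with the untwisted one. Ng\^o's argument on the anisotropic locus (vanishing of $H^1$ of the dual complex via the absence of infinitesimal automorphisms and Serre duality) then carries over verbatim, and gives unobstructedness and finite unramified inertia.

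Properness over $\A_H^{\ani}$ I would deduce from properness of $\Mb_{\overline{H}}^{\ani}$ over $\A_{\overline{H}}^{\ani}$, using that $\A_H\to\A_{\overline{H}}$ is finite. The fibres of $F$ are $\Sect_X(BZ)$, which is not proper, so I would pass to the $Z$-rigidified stack, or better compare via the finite map $\Mb_H^{\beta}\to\Mb_H^{\beta}/\Sect$-action. Concretely, I would use the valuative criterion: lift a point over the generic point using Lemma \ref{vanishing-H2} (local $H^2$ of tori vanishes) to extend the gerbe isomorphism. The DM property follows since automorphism groups are subgroups of $J_a(X)$, which are finite on the anisotropic locus.

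For (iii) and (iv), the key observation is that $\Mb^{\beta,\reg}_{H,a}$ is, étale locally on $X$, isomorphic to the untwisted regular locus. A choice of local trivialisation of $\beta$ identifies $[h_D/H]^{\beta,\reg}$ with $[h^{\reg}_D/H]$ over $U_i$. Ng\^o's argument that $[h_D^{\reg}/H]\to \mathfrak c_D$ is a gerbe banded by $J$, which is neutral thanks to the Kostant section, then needs to be adapted. In the twisted setting this map is a $J_a$-gerbe over $X$, whose class is the image of $\beta$ under $Z\to J_a$. Since $a\in\A^\heartsuit$, Lemma \ref{torus-curve-trivial} kills $H^2(X_{\bar k},J_a)$, so sections exist and form a $\Pb_{H,a}$-torsor; making this uniform in families over $\A^\heartsuit$ is the step I expect to be the main obstacle. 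For that I would use Corollary \ref{isomorphism-H2} to control $R^2p_*J$. Density in (iii) I would get by transporting Ng\^o's codimension estimate, which is local on $X$, through the same local identification.
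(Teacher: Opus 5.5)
\textbf{Verdict: there is a genuine gap in (iv), plus a wrong claim in (ii).} Your reduction for (i) is the paper's. Your pointwise analysis in (iv) is also correct: over $a$, the twisted regular locus is governed by the $J_a$-gerbe $\beta_a=B_XJ_a\times^{B_XZ}\beta$, which is neutral over $\bar k$ by Lemma \ref{torus-curve-trivial}.

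\textbf{The gap in (iv).} It is the step you flag yourself: passing from fibres to a torsor over $\A^\heartsuit_H$. The tool you propose does not apply. Corollary \ref{isomorphism-H2} is about the group $Z$ pulled back to a product $X\times_k M$. By contrast, $J$ genuinely varies over $X\times\A^\heartsuit_H$ and is only fibrewise generically a torus, and nothing in the paper or your plan controls $R^2p_*J$.

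The missing observation is that no such control is needed. Because $[h^{\reg}_D/H]^\beta\to(\mathfrak c_h)_D$ is a $J$-gerbe, there is an isomorphism
\[ BJ\times_{(\mathfrak c_h)_D}[h^{\reg}_D/H]^\beta\cong[h^{\reg}_D/H]^\beta\times_{(\mathfrak c_h)_D}[h^{\reg}_D/H]^\beta . \]
Applying the mapping-stack functor gives $(\mathrm{act},\id)\colon\Pb_H\times_{\A^\heartsuit_H}\Mb^{\beta,\reg}_H\cong\Mb^{\beta,\reg}_H\times_{\A^\heartsuit_H}\Mb^{\beta,\reg}_H$ uniformly in families. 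What remains is that $\Mb^{\beta,\reg}_H\to\A^\heartsuit_H$ is an fppf cover. That means it is smooth, which follows from Lemma \ref{lemma:sect-torsor} and the classical fact that $\Mb^{\reg}_{\overline H}$ is a $\Pb_{\overline H}$-torsor. It also means it is surjective on geometric points, which is exactly what your pointwise argument supplies. The paper obtains non-emptiness slightly differently, from the transitivity of the $\Pb_{\overline H,\bar a}$-action on the components $\Mb^\beta_{\overline H,\bar a}$, but this rests on the same vanishing $H^2(X_{\bar k},J_a)=0$.

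\textbf{The problem in (ii).} Your claim that $\Sect_X(BZ)$ is not proper is wrong in the relevant case. When $Z$ is finite \'etale (e.g.\ $H$ semisimple in the allowed characteristics), $\Sect_X(BZ)$ is an \'etale DM stack over $k$ with finitely many geometric points and finite automorphism groups, hence proper. So $F$ is itself proper and smooth with DM fibres. All of (ii) then follows by composing $F$ with the base change of $\Mb^{\ani}_{\overline H}\to\A^{\ani}_{\overline H}$ along $\A_H\to\A_{\overline H}$. Finiteness of $\A_H\to\A_{\overline H}$ is irrelevant, since properness is stable under base change.

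Your substitute valuative-criterion argument is not carried out, and it points to the wrong input. It would need the local constancy of $\beta^{\mathrm{univ}}$ (Corollary \ref{isomorphism-H2}), so that limits stay in $\Mb^\beta_{\overline H}$, together with extension of $Z$-torsors. Lemma \ref{vanishing-H2} is not what is needed.

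\textbf{(iii).} Density is a global statement about the Hitchin fibre, so \'etale-local trivialisations of $\beta$ do not by themselves transport Ng\^o's result. Instead, note that $\Mb^{\beta,\reg}_{H,a}$ is the preimage of the dense open $\Mb^{\beta,\reg}_{\overline H,\bar a}$ under the surjection $\Mb^\beta_{H,a}\to\Mb^\beta_{\overline H,\bar a}$ of Lemma \ref{lemma:sect-torsor}. That map is flat, hence open, so density is inherited directly. This is how the paper proceeds: (i)--(iii) are formal consequences of Lemma \ref{lemma:sect-torsor} and the corresponding statements for $\Mb_{\overline H}$.
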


\begin{proof}
    Claims (i) - (iii) follow directly from Lemma \ref{lemma:sect-torsor} and the corresponding statements for $\Mb_{\bar H}$. 
  
    For (v), since $[\mathfrak{g}^{\reg}/G]^\beta\to \mathfrak{c}$ is a $J$-gerbe, we have an isomorphism 
    $$(\mathrm{act}, \id): BJ\times_{\mathfrak{c}} [\mathfrak{g}^{\reg}/G]^\beta\to [\mathfrak{g}^{\reg}/G]^\beta\times_{\mathfrak{c}} [\mathfrak{g}^{\reg}/G]^\beta.$$
    By taking mapping stacks, we get an isomorphism
    \[(\mathrm{act}, \id): \Pb_{H}\times_{\A^{\heartsuit}_{H}}\Mb_H^{\beta,\reg}\to \Mb_H^{\beta,\reg} \times_{\A^{\heartsuit}_{H}}\Mb_H^{\beta,\reg}.\]
   It remains to show that $\BM_H^{\beta,\reg} \to \A^\heartsuit_H$ is an fppf covering. Lemma \ref{lemma:sect-torsor} implies that this morphism is smooth. Hence it suffices to show that each fiber of this morphism is non-empty. Using Lemma \ref{lemma:sect-torsor} this reduces to showing that every fiber of $\Mb_{\bar H}^{\beta,\reg} \to \A_{\overline H}$ is non-empty. Since, as we have noted above, for each $a \in \A^\heartsuit(\bar k)$ the $\Mb_{\bar H,\bar a}^{\beta,\reg}$ are permuted transitively by $\Pb_{\overline{H},\bar a}$, this follows from the fact that $\Mb_{\overline H}^{\reg}$ is a $\Pb_{\overline H}$-torsor over $\A^\heartsuit$.

\end{proof}

\begin{lemma}
The morphism 
\[
\bigsqcup_{\beta} \Mb^{\beta}_H \to \Mb_{H/Z},
\]
where $\beta$ ranges over a family of representatives for $H^2_{\text{\'et}}(X,Z)$, is surjective.
\end{lemma}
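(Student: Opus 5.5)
To prove surjectivity it suffices to check it on geometric points. So let $K$ be an algebraically closed field over $k$ and let $(E,\theta)$ be an $\overline{H}$-Higgs bundle on $X_K$, i.e.\ a $K$-point of $\Mb_{H/Z}$. We have to find a representative $\beta$ in the chosen family and a $\beta$-twisted $H$-Higgs bundle over $X_K$ mapping to $(E,\theta)$, up to isomorphism.

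The natural candidate gerbe is $[E/H]$. It is the $Z$-gerbe of liftings of $E$ along $H\to\overline{H}$, and its class in $H^2_{\et}(X_K,Z)$ is the image of $[E]$ under the coboundary $H^1(X_K,\overline{H})\to H^2(X_K,Z)$. By Lemma \ref{torus-curve-trivial} (with Corollary \ref{isomorphism-H2}), $H^2_{\et}(X_K,Z)\cong H^2_{\et}(X,\pi_0(Z))$, and the family of representatives is indexed by this group. So there is a $\beta$ in the family and an isomorphism of $Z$-gerbes $[E/H]\cong \beta_{X_K}$. We fix such an isomorphism.

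By the description following Definition \ref{defi:twisted-bundle}, the pair consisting of $E$ and this isomorphism is exactly a $\beta$-twisted $H$-bundle $\Vv$ on $X_K$ whose underlying $\overline{H}$-torsor is $E$.

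It remains to lift the Higgs field. We use the Cartesian square defining $[h_D/H]^\beta$: it is the fibre product of $\Tw_H^\beta$ and $[h_D/\overline{H}]$ over $B_X\overline{H}$. A section of $[h_D/H]^\beta$ is therefore the datum of $\Vv$ together with a section of $[h_D/\overline{H}]$ over $E$, and the latter is precisely $\theta\in\Gamma(X_K,E\times^{\overline{H}}h_D)$. Hence $(\Vv,\theta)$ is a $\beta$-twisted $H$-Higgs bundle, and its image under Construction \ref{const:untwist} is $(E,\theta)$.

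The main subtlety is matching the class $[E/H]$ with one of the chosen representatives. If the representatives are chosen over $k$ but the point is only defined over $K$, we need the comparison $H^2(X,Z)\to H^2(X_K,Z)$ to be surjective. I would handle this using the identification with $H^2(X,\pi_0(Z))$ together with proper base change for the torsion sheaf $\pi_0(Z)$, exactly as in the proof of Corollary \ref{isomorphism-H2}.

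Alternatively, one can argue directly with Lemma \ref{lemma:sect-torsor}. Every point lies in some $\Mb_{\overline{H}}^{\beta}$ because of the decomposition $\Mb_{\overline{H}}=\bigsqcup_\beta \Mb^\beta_{\overline{H}}$. The map $\Mb^\beta_H\to \Mb_{\overline{H}}^\beta\times_{\A_{\overline H}}\A_H$ is a torsor and hence surjective. Finally, $\A_H\to\A_{\overline H}$ is surjective (a finite map, an isomorphism on the semisimple part), so the base change of it is surjective as well.
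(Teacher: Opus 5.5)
Your proposal is correct and follows the paper's argument. The class of $[E/H]$ is the obstruction to lifting $E$ along $H\to\overline H$, and choosing $\beta$ with $[E/H]\cong\beta$ makes $E$ a $\beta$-twisted $H$-bundle; the paper's one-line proof says no more than this and does not mention the Higgs field, which you carry along.

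Two small points are worth tidying:
\begin{enumerate}
\item When $Z$ is not finite, the Higgs field of a point of $\Mb_{H/Z}$ lies in $\Lie(\overline H)_D$, not $\mathfrak h_D$, so it must be lifted. This is possible because $\mathfrak h_D\to\Lie(\overline H)_D$ has an $\overline H$-equivariant splitting in the characteristics considered. For the same reason $\A_H\to\A_{\overline H}$ is a split projection, not a finite map.
\item Proper base change only identifies $H^2$ across algebraically closed fields. Your handling of the field of definition therefore works for $k=\bar k$, which is how the lemma has to be read: over a non-closed $k$ a geometric class need not even be Galois-invariant, and the paper also works geometrically here without saying so.
\end{enumerate}
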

\begin{proof}
This follows from the observation that $H^2_{\text{\'et}}(X,Z)$ measures the obstruction to lifting an $H/Z$-torsor to an $H$-torsor. If the obstruction is non-trivial, a lift to a twisted $H$-torsor exists. 
\end{proof}

\begin{example}\label{slex}
 The moduli space $\Mb_{\PGL_n}$ has several connected components. One has $\pi_0(\Mc_{\PGL_n}) = \Zb/n\Zb = H^2_{\text{fl}}(X_{\bar{k}},\mu_n)$. The component labeled by $0$ corresponds to the image of $\Mc_{\SL_n}$. The other components are isomorphic to the images of moduli spaces $\Mc_{\SL_n}^L$ of ``$\SL_n$"-bundles as defined by Hausel--Thaddeus. Here $\Mc_{\SL_n}^L$ denotes the rigidified moduli space of Higgs bundles with a fixed determinant $L$, where $L$ denotes a fixed line bundle of non-zero degree. 
 
 These components can be seen as the images of moduli spaces of twisted Higgs bundles $\Mb^\beta_{\SL_n}$ in our sense as follows: Since $H^2_{\text{\'et}}(X_{\bar{k}},\Gb_m)=0$ by Lemma \ref{vanishing-H2}, any class $\beta \in H^2(X,Z(\SL_n))=H^2(X,\mu_n)$ comes from a line bundle $L \in H^1(X,\Gb_m).$ Using a choice of auxiliary line bundle $N$ on $X$ of degree one, one can lift the canonical $\Gamma \defeq \Pic^0(X)[n]$-action on $\Mc_{\SL_n}^L$ to an action on $\Mb_{\SL_n}^L$, c.f. \cite[Subsection 2.4]{maneval2025tms}. This gives the following diagram:
 \begin{equation*}
     \xymatrix{
     \Mb_{\SL_n}^L  \ar[r] \ar[d] & [\Mb_{\SL_n}^L/\Gamma]=\Mb^\beta_{\SL_n} \ar[d]\\
     \Mc_{\SL_n}^L \ar[r] & [\Mc_{\SL_n}^L/\Gamma]=\Mb_{\PGL_n}^\beta
     }
 \end{equation*}
\end{example}

\subsection{Moduli spaces associated to coendoscopic data}
We now put ourselves in the situation of \cite[Section 5]{GWZ20b}. In this subsection, we fix a coendoscopic datum $\CE=(\kappa,\rho_\kappa,\rho_\kappa \to \rho)$ for $G$ over $k$. 

\begin{definition}
  Recall from \cite[Construction 5.2]{GWZ20b} that we have a commutative diagram
    \[\xymatrix{
    1\ar[r] &Z(\BH_\kappa) \ar[r]\ar[d] &\tilde{\pi}_0(\kappa) \ar[r]\ar[d] &\pi_0(\kappa)\ar[d] \ar[r] &1\\
    1\ar[r] &\BH_\kappa \ar[r] &(\BG\rtimes \mathrm{Out}(\BG))_\kappa \ar[r] &\pi_0(\kappa) \ar[r] &1,
    }   
    \]
    where $\tilde{\pi}_0(\kappa) \subset (\BG\rtimes \Out(\BG))_\kappa$ is the stabilizer of the pinning of $\BH_\kappa$.

We let $\beta_\CE$ be the $Z_\CE=Z(H_\CE)$-gerbe $[\rho_\kappa/\tilde\pi_0(\kappa)]$ and denote the associated spaces $\Mb_{H_\CE}^{\beta_\CE,?}$ (resp. $\Mc_{H_\CE}^{\beta_\CE,?}$) simply by $\Mb_{\CE}^{?}$ (resp. $\Mc_{\CE}^?$).  
\end{definition}

\begin{proposition} \label{TwistedDesc}
    There are canonical isomorphisms as follows:
    \begin{enumerate}
        \item     $\Tw_{H_\CE}^{\beta_\CE} \cong [\rho_\kappa/ (\BG \rtimes \Out(\BG))_\kappa]$        
    \item    
       $ [\mathfrak{h}_{ \CE,D}/H_\CE]^{\beta_\CE} \cong [\mathbf{h}_{D} \times \rho_\kappa  / (\mathbb{G}\rtimes \mathrm{Out}(\mathbb G))_\kappa].$
    \end{enumerate}

\end{proposition}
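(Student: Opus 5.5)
We identify both sides as stacks over $X$ through the two short exact sequences from the diagram of \cite[Construction 5.2]{GWZ20b}. Set $\Gamma \defeq (\BG\rtimes\Out(\BG))_\kappa$ and $\tilde\Gamma\defeq\tilde\pi_0(\kappa)$, with $\BH_\kappa \subset \Gamma$ and $Z(\BH_\kappa)\subset\tilde\Gamma$. The first step is to record that $\BH_\kappa\cap\tilde\Gamma = Z(\BH_\kappa)$ and that $\BH_\kappa\cdot\tilde\Gamma = \Gamma$. This holds because both rows surject onto $\pi_0(\kappa)$ compatibly, and an element of $\BH_\kappa$ stabilising the pinning of $\BH_\kappa$ is central. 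Hence
\[
\Gamma \cong (\BH_\kappa \rtimes \tilde\Gamma)/Z(\BH_\kappa),
\]
where $Z(\BH_\kappa)$ is embedded antidiagonally, i.e.\ $\Gamma = \BH_\kappa\times^{Z(\BH_\kappa)}\tilde\Gamma$ as groups with $\tilde\Gamma$ acting on $\BH_\kappa$ by conjugation. Consequently $\Gamma/Z(\BH_\kappa) \cong \overline{\BH}_\kappa\rtimes\pi_0(\kappa)$, using that $\tilde\Gamma$ acts on $\overline{\BH}_\kappa$ through $\pi_0(\kappa)$ because it preserves a pinning.

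For (i), we twist by the $\pi_0(\kappa)$-torsor $\rho_\kappa$ (with $\rho_\kappa$ also carrying its $\tilde\Gamma$-structure via the gerbe $\beta_\CE$). By definition $H_\CE = \rho_\kappa\times^{\pi_0(\kappa)}\BH_\kappa$ (outer twist through the pinned action), and $\beta_\CE = [\rho_\kappa/\tilde\Gamma]$ is a $Z_\CE$-gerbe. The key observation is that $[\rho_\kappa/\Gamma]\to[\rho_\kappa/\tilde\Gamma]$ is not the right direction. Instead we use Lemma \ref{TwPushout}: $\Tw^{\beta_\CE}_{H_\CE}\cong B_XH_\CE\times^{B_XZ_\CE}\beta_\CE$. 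Over $X$, the stack $[\rho_\kappa/\Gamma]$ receives maps from $[\rho_\kappa/\tilde\Gamma]=\beta_\CE$ and from $B_XH_\CE=[\rho_\kappa/\BH_\kappa\rtimes\tilde\Gamma]\times_{\beta_\CE}$-type quotients. Concretely, $[\rho_\kappa/(\BH_\kappa\rtimes\tilde\Gamma)]$ maps to both $B_XH_\CE$ and $\beta_\CE$, and $\Gamma$ is the quotient by the antidiagonal $Z$. This identifies $[\rho_\kappa/\Gamma]$ with the contracted product $B_XH_\CE\times^{B_XZ_\CE}\beta_\CE$. To make this rigorous without fiddling with group quotients, we construct the map $[\rho_\kappa/\Gamma]\to B_X\overline H_\CE$ (induced by $\Gamma\to\overline{\BH}_\kappa\rtimes\pi_0(\kappa)$, twisted by $\rho_\kappa$) together with an identification of the associated $Z$-gerbe with $\beta_\CE$ (from $\tilde\Gamma\subset\Gamma$). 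This yields a morphism of $B_XZ_\CE$-torsors over $X$ into $\Tw^{\beta_\CE}_{H_\CE}$, which is automatically an isomorphism exactly as in the proof of Lemma \ref{TwPushout}.

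For (ii), we apply the same argument after adjoining Lie algebra data. The Cartesian square defining $[\mathfrak h_{\CE,D}/H_\CE]^{\beta_\CE}$ is the pullback of $[\mathfrak h_{\CE,D}/\overline H_\CE]\to B_X\overline H_\CE$ along $\Tw\to B_X\overline H_\CE$. Under (i), this becomes the pullback of $[\mathbf h_D\times\rho_\kappa/(\overline{\BH}_\kappa\rtimes\pi_0(\kappa))]$ along $[\rho_\kappa/\Gamma]\to[\rho_\kappa/(\overline{\BH}_\kappa\rtimes\pi_0(\kappa))]$. Since $\Gamma$ acts on $\mathbf h$ through this quotient (the centre acts trivially by the adjoint action), that pullback is $[\mathbf h_D\times\rho_\kappa/\Gamma]$.

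The main obstacle is the group-theoretic step of writing $\Gamma$ as the central pushout and checking compatibility of the twisting by $\rho_\kappa$. In particular, one must verify that the $Z$-gerbe attached to the induced $\overline H_\CE$-torsor is canonically $[\rho_\kappa/\tilde\Gamma]$, including the correct sign convention on the antidiagonal $Z$.
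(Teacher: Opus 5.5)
Your proposal is correct and is essentially the paper's argument: for (i) the paper likewise identifies $[\rho_\kappa/(\BG\rtimes\Out(\BG))_\kappa]$ with $B_XH_\CE\times^{B_XZ_\CE}\beta_\CE$ and then applies Lemma~\ref{TwPushout}. The paper gets that identification from the $B_XZ_\CE$-equivariant map $\beta_\CE\to[\rho_\kappa/(\BG\rtimes\Out(\BG))_\kappa]$, whereas you use the presentation $(\BH_\kappa\rtimes\tilde\pi_0(\kappa))/Z(\BH_\kappa)^{\mathrm{anti}}$. For (ii) the paper pulls back along $(\BG\rtimes\Out(\BG))_\kappa/Z(\BH_\kappa)\cong\BH_\kappa/Z(\BH_\kappa)\rtimes\pi_0(\kappa)$, obtained by conjugating pinnings, which is the same splitting you derive from $\BH_\kappa\cap\tilde\pi_0(\kappa)=Z(\BH_\kappa)$.

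The sign convention you flag is not addressed in the paper either; it sits inside the conventions of Lemma~\ref{TwPushout}, so it is not a gap relative to the paper's proof.
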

\begin{proof}
    (i) We note that by \cite[Construction 5.1]{GWZ20b}, there is a canonical isomorphism $$BH_\CE=[\rho_\kappa/ (\BH_\kappa \rtimes \pi_0(\kappa))],$$ using which $[\rho_\kappa/(\BG \rtimes \Out(\BG))_\kappa]$ naturally becomes a $BH_\CE$-torsor over $X=[\rho_\kappa/\pi_0(\kappa)]$.
    
    Similarly, the canonical morphism
    \begin{equation*}
        \beta=[\rho_\kappa/\tilde{\pi}_0(\kappa)] \to [\rho_\kappa/(\BG \rtimes \Out(\BG))_\kappa]
    \end{equation*}
    is $B Z_\CE=[\rho_\kappa/(Z(\BH_\kappa) \rtimes \pi_0(\kappa))]$-equivariant. So this gives a canonical identification $$[\rho_\kappa/(\BG \rtimes \Out(\BG))_\kappa] \cong BH_\CE \times^{B Z_\CE} \beta.$$ Using Lemma \ref{TwPushout} this gives the first isomorphism. 

    (ii) Consider the following diagram of groups with exact rows:
\[
\xymatrix{
1 \ar[r] & \BH_{\kappa} \ar[r] \ar[d] & (\Gb \rtimes \Out(\Gb))_{\kappa} \ar[r] \ar[d] & \pi_0(\kappa) \ar[d] \ar[r] & 1 \\
1 \ar[r] & \BH_{\kappa}/Z(\BH_{\kappa}) \ar[r] & \ar@/_1.0pc/@{-->}[l](\Gb \rtimes \Out(\Gb))_{\kappa} / Z(\BH_{\kappa}) \ar[r] &  \pi_0(\kappa) \ar[r] & 1
}
\]
Here the bottom exact sequence admits a canonical splitting as follows: If we conjugate the given pinning on $\BH_\kappa$ by any section of $(\BG \rtimes \Out(\BG))_\kappa$, we obtain a new pinning of $\BH_\kappa$, which differs from the original one by a unique section $\BH_\kappa/Z(\BH_\kappa)$. This gives the dashed arrow splitting the sequence. So we obtain a canonical isomorphism
\begin{equation} \label{SplitIso}
    (\Gb \rtimes \Out(\Gb))_{\kappa} / Z(\BH_{\kappa}) \cong \BH_{\kappa}/Z(\BH_{\kappa}) \rtimes \pi_0(\kappa)
\end{equation}

Using \eqref{SplitIso} and (i), we obtain the following Cartesian diagram
\begin{equation*}
    \xymatrix{
        [\mathbf{h}_{D} \times \rho_\kappa  / (\mathbb{G}\rtimes \mathrm{Out}(\mathbb G))_\kappa] \ar[r] \ar[d] & [\rho_\kappa  / (\mathbb{G}\rtimes \mathrm{Out}(\mathbb G))_\kappa]=\Tw_{H_\CE}^{\beta_\CE} \ar[d] \\
        [\mathbf{h}_{D} \times \rho_\kappa  / ((\mathbb{G}\rtimes \mathrm{Out}(\mathbb G))_\kappa/Z(\BH_\kappa))]\ar@{=}[d] \ar[r] & [\rho_\kappa  / ((\mathbb{G}\rtimes \mathrm{Out}(\mathbb G))_\kappa/Z(\BH_\kappa))] \ar@{=}[d]\\
        [\mathbf{h}_{D} \times \rho_\kappa  / (\BH_{\kappa}/Z(\BH_{\kappa}) \rtimes \pi_0(\kappa))] \ar@{=}[d] \ar[r] & [\rho_\kappa  / (\BH_{\kappa}/Z(\BH_{\kappa}) \rtimes \pi_0(\kappa))]\ar@{=}[d]\\
        [h_D/\overline{H}_\CE] \ar[r] & B \overline{H}_\CE
    }
\end{equation*}
\end{proof}

    \begin{definition}
       Using the objects defined in \cite[Construction 5.9]{GWZ20b}, for any of the objects $\Mb_\CE^*$ defined above, we let $$\widetilde\Mb_\CE^* \defeq \Mb_\CE^* \times_{\A_\CE} \widetilde \A_\CE.$$ 
    \end{definition}
\begin{construction}
	Using Proposition \ref{TwistedDesc} we obtain the natural map of groupoids 
	\[[\mathfrak{h}_{ \CE,D}/H_\CE]^{\beta_\CE}\cong [\mathbf{h}_{D} \times \rho_\kappa  / (\mathbb{G}\rtimes \mathrm{Out}(\mathbb G))_\kappa]\to [\mathbf{g}_{D} \times \rho_\kappa  / \mathbb{G}\rtimes \mathrm{Out}(\mathbb G)]\cong [\mathfrak g_D/G],\]
	which induces a map $\mu_\CE\colon \mathbb{M}_{\mathcal{E}}\to \mathbb{M}_{G}$. Moreover, since $\kappa:\hat{\mu}\to \mathbb{T}$ maps into the center of $(\mathbb{G}\rtimes \mathrm{Out}(\mathbb G))_\kappa$ by definition, every twisted Higgs bundle $F\in \mathbb{M}_{\mathcal{E}}$ is equipped with a morphism $\hat{\mu}\toover{\kappa} \Aut (F)$. Using the functoriality of twisted inertia stacks we obtain a morphism of stacks
	\[\mu_{\mathcal E}: \mathbb{M}_{\mathcal{E}}\to I_{\hat\mu}\mathbb{M}_{\mathcal{E}}\to I_{\hat\mu}\mathbb{M}_{G}.\]

    By passing to coarse moduli spaces we obtain a morphism
    \begin{equation*}
        \nu_\CE\colon \mathfrak{c}_\CE = \mathfrak{h}_{\CE,D}/H_\CE \to \mathfrak{c}=\mathfrak{g}_D/G,
    \end{equation*}
    which induces a morphism $\A_\CE \to \A$. One can check that this morphism coincides with the one defined in \cite[1.9]{Ng10}. For this one may pass to a finite extension to $k$ over which $\rho_\kappa$ has a section, which then trivializes $\beta$ and identifies $\mu_\CE$ with the canonical morphism $[\mathbf{h}_D/\mathbb{H}] \to \mathbf{g}/\Gb$.
    
    Finally by pulling back $\mu_\CE$ to the \'etale-open $\widetilde\A^{\infty,\ani} \to \A$ from \cite[Construction 5.9]{GWZ20b} we obtain the following morphism $\tilde\mu_\CE$:
\begin{equation*}
    \xymatrix{
        \widetilde\Mb^{G-\infty,\ani}_\CE \ar[r]^{\tilde\mu_\CE} \ar[d] & I_{\hat\mu} \widetilde{\Mb}^{\ani}_G \ar[d] \\
        \widetilde{\A}^{G-\infty,\ani}_\CE \ar[r] & \widetilde{\A}^{\infty,\ani}        }
\end{equation*}

\end{construction}

\subsection{Description of the twisted inertia stack}
We continue with the situation of \cite[Section 5]{GWZ20b} and give the following description of the inertia stack of $I_{\hat\mu}\widetilde{\mathbb{M}}^{\mathrm{ani}}_{G, \bar k}$ which corrects and strengthens \cite[Theorems 5.14 and Corollary 5.24]{GWZ20b}. 

For this, we call two coendoscopy data $(\kappa,\rho_\kappa,\rho_\kappa \to \rho)$ and $(\kappa',\rho_{\kappa'},\rho_{\kappa'} \to \rho)$ for $G$ over $k$ equipped with a trivialization of $\rho_{\kappa,\infty}$ isomorphic, if $\kappa=\kappa'$ and if there exists an isomorphism $\rho_{\kappa}\cong \rho_{\kappa'}$ of $\pi_0(\kappa)$-torsors over $X$ compatible with the trivializations at $\infty$ and the morphisms to $\rho$.

\begin{theorem}\label{dec-IM}
	The morphism of stacks
	\begin{equation*}
		\bigsqcup_{\mathcal{E}}\tilde \mu_{\mathcal{E}} \colon  \bigsqcup_{\mathcal{E}} \widetilde{\mathbb{M}}^{G-\infty,\mathrm{ani}}_{\mathcal{E}} \to I_{\hat\mu}\widetilde{\mathbb{M}}^{\mathrm{ani}}_{G,\bar k},
	\end{equation*}
	where the disjoint union is taken over isomorphism classes of coendoscopic data $\mathcal{E}=(\kappa, \rho_\kappa, \rho_\kappa\to \rho_{\bar k})$ for $G$ over $\bar k$ equipped with a trivialization at $\infty$, is an equivalence.
\end{theorem}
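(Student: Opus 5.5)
We prove Theorem \ref{dec-IM} pointwise: first we show that $\bigsqcup_\CE \tilde\mu_\CE$ induces an equivalence on groupoids of $S$-points for strictly henselian (or just $\bar k$-) test schemes $S$, and then we upgrade this to an equivalence of stacks. Both sides are smooth Deligne--Mumford stacks: the target because $I_{\hat\mu}$ of a smooth DM stack is smooth in characteristic prime to the relevant orders, and the source by Proposition \ref{heart}(ii). So it suffices to check three things. First, $\bigsqcup_\CE\tilde\mu_\CE$ is étale, which we verify on tangent complexes. Second, it is fully faithful on $\bar k$-points. Third, it is essentially surjective on $\bar k$-points.

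For essential surjectivity we start from a pair $(E,\theta)\in \widetilde\Mb^{\ani}_G(\bar k)$ together with $\phi\colon\hat\mu\to \Aut(E,\theta)$, and we reproduce the construction of \cite[Section 5]{GWZ20b}. We write $E$ as a $\BG\rtimes\Out(\BG)$-torsor $P$ over $\rho$, via $[\mathfrak g_D/G]\cong[\mathbf g_D\times\rho/\BG\rtimes\Out(\BG)]$. Since $\theta$ is anisotropic, $\phi$ has finite image in $\Aut(E,\theta)$, and the trivialization at $\infty$ lets us conjugate $\phi$ locally to a fixed $\kappa\colon\hat\mu\to\mathbb T$. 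The subsheaf of $P$ where $\phi$ acts through $\kappa$ is then a torsor $P_\kappa$ under the centralizer $(\BG\rtimes\Out(\BG))_\kappa$. Set $\rho_\kappa = P_\kappa/\BH_\kappa$. This is a $\pi_0(\kappa)$-torsor mapping to $\rho$, trivialized at $\infty$, and it gives the datum $\CE$. The pair $(P_\kappa,\theta)$ is a section of $[\mathbf h_D\times\rho_\kappa/(\BG\rtimes\Out(\BG))_\kappa]$, because $\theta$ is $\phi$-invariant and so lies in $\mathbf h_D=\mathbf g_D^{\kappa}$. By Proposition \ref{TwistedDesc}(ii) this section is exactly a $\beta_\CE$-twisted $H_\CE$-Higgs bundle.

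This is precisely where the old argument broke. It tried to lift $P_\kappa$ to a $\BH_\kappa\rtimes\pi_0(\kappa)$-torsor, which is obstructed by the class of $\beta_\CE$. The twisted formalism removes that lifting step entirely. Finally, we check that the resulting point lies over $\widetilde\A^{G-\infty,\ani}_\CE$, using the compatibility $\A_\CE\to\A$ with \cite[1.9]{Ng10}.

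For full faithfulness we argue as follows. An isomorphism of pairs $(E,\theta,\phi)\cong(E',\theta',\phi')$ intertwines $\phi$ and $\phi'$. It therefore forces $\kappa=\kappa'$, carries $P_\kappa$ to $P'_{\kappa'}$, and induces an isomorphism $\rho_\kappa\cong\rho_{\kappa'}$ compatible with the trivializations at $\infty$. This shows that distinct isomorphism classes of $\CE$ have disjoint images. Within a fixed $\CE$, the recovery $P_\kappa\mapsto P=P_\kappa\times^{(\BG\rtimes\Out(\BG))_\kappa}(\BG\rtimes\Out(\BG))$ inverts the construction functorially. Automorphisms on both sides are then the $\phi$-commuting automorphisms of $(E,\theta)$, which are the automorphisms of $(P_\kappa,\theta)$.

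The étaleness check is the step I expect to be the main obstacle. We must compare the deformation complex of $I_{\hat\mu}\Mb_G$ at $(E,\theta,\phi)$, which is the $\hat\mu$-invariants of $[\ad E\to\ad E\otimes D]$, with that of $\Mb_\CE$, which is $[\ad P_\kappa\to \ad P_\kappa\otimes D]$ for the Lie algebra of $(\BG\rtimes\Out(\BG))_\kappa$. The point is that $\Lie((\BG\rtimes\Out(\BG))_\kappa)=\mathfrak g^\kappa=\mathbf h$, and that the twist by $\beta_\CE$ does not affect adjoint bundles, since $Z$ acts trivially on them. Taking $\hat\mu$-invariants is exact because the order is prime to $p$, so the two complexes are identified. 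Over the anisotropic locus, the identification with the stacks $\widetilde\ ^\cdot$ at $\infty$ is then an étale base change on both sides. An étale morphism that is bijective on isomorphism classes of $\bar k$-points and fully faithful is an equivalence, and this concludes the proof.
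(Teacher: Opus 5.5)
Your route differs from the paper's. The paper works with $S$-points for an arbitrary $\bar k$-scheme $S$ and writes down the inverse functor directly, so it has to handle families. It obtains $\kappa$ from $\gamma_\infty\colon\hat\mu\to\BT$ by Zariski-local constancy of homomorphisms of diagonalizable groups. It then descends $\rho_\kappa=\pi_0(F)$ from $X_{\bar k}\times S$ to $X_{\bar k}$ using $\pi_1(X_{\bar k}\times S')\cong\pi_1(X_{\bar k})\times\pi_1(S')$ together with a canonical point at $\infty$.

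You instead check the equivalence on $\bar k$-groupoids and upgrade it by \'etaleness, which is legitimate. Your tangent-complex comparison is correct: $\Lie((\BG\rtimes\Out(\BG))_\kappa)=\mathfrak g^\kappa=\mathbf h$, $\hat\mu$-invariants are exact, adjoint bundles do not see the central twist, and the tilde structures are \'etale base changes. The upgrade criterion also holds: an \'etale morphism of DM stacks locally of finite type over $\bar k$ that is an equivalence on $\bar k$-groupoids is an isomorphism. Indeed, injectivity on automorphism groups at $\bar k$-points gives representability, since the identity of the unramified relative inertia is open. One then has an \'etale monomorphism, i.e.\ an open immersion, hitting every $\bar k$-point. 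So you trade the family-level descent for deformation theory and smoothness of both sides, which the paper does not need.

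The genuine gap is the content of Lemma \ref{trival-infty}. You say $\rho_\kappa$ is \emph{trivialized at $\infty$} without saying by what. Over $\bar k$ some trivialization exists, but the components are indexed by isomorphism classes of data \emph{with} a trivialization. So you need one determined by $(E,\theta,\tilde\infty,\phi)$. The natural choice is the subscheme $K=\theta^{-1}(\tilde\infty,\infty_\rho)\subset E_\infty$. It is a $\BT$-torsor, because the centralizer of $\tilde\infty$ is $\BT$, hence it is connected. By Construction \ref{def-4.36}, $\phi$ acts on $K$ through $\kappa$, so $K\subset F$ and its image is a canonical point of $\pi_0(F)_\infty$.

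You need this point in three places:
\begin{enumerate}
\item To see that $((F,\theta_F),\tilde\infty)$ lies in $\widetilde\Mb^{G-\infty,\ani}_\CE$. Compatibility of $\tilde\infty$ with $\theta_F$ at $\infty$ is read off through this trivialization; the comparison of $\A_\CE\to\A$ with \cite[1.9]{Ng10} only controls the untilded base.
\item To give meaning to isomorphisms being \emph{compatible with the trivializations at $\infty$} in your disjointness argument.
\item To show that every $\phi$-commuting automorphism of $(E,\theta,\tilde\infty)$ induces the identity on $\rho_\kappa$. Such an automorphism preserves $K$, hence fixes the canonical point of $\rho_{\kappa,\infty}$, hence the whole fibre by equivariance. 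It is therefore trivial on $\rho_\kappa$, since every component of this finite \'etale cover of the connected curve meets that fibre. Without this, $\Aut(F,\theta_F)$ need not exhaust the $\phi$-commuting automorphisms, and full faithfulness is unproved.
\end{enumerate}
Relatedly, you should define $\kappa$ via Construction \ref{def-4.36} from $\tilde\infty$. Merely conjugating $\phi$ at $\infty$ into $\BT$ fixes $\kappa$ only up to the Weyl group, and your step $\kappa=\kappa'$ needs the canonical version.
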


\begin{rmk}\label{desc}
    One could phrase Theorem \ref{dec-IM} as an equivalence of stacks over $k$ as follows: The Galois group $\Gal(\bar k / k)$ acts naturally on the set of isomorphism classes of coendoscopic data $\CE$ appearing there, and for each orbit of this action, the stack $\bigsqcup_{\mathcal{E}} \widetilde{\mathbb{M}}^{G-\infty,\mathrm{ani}}_{\mathcal{E}}$ descends to $k$. Then the disjoint union of these stacks over $k$ is naturally equivalent to $I_{\hat\mu}\widetilde{\mathbb{M}}^{\mathrm{ani}}_{G}$ over $k$.
\end{rmk}

\begin{construction}[{\cite[Construction 4.36]{GWZ20b}}]\label{def-4.36}
	Let $S$ be a connected $X$-scheme equipped with $(E,\theta):S\to [\mathfrak g_D/G]$. 
	In applications $S$ lies over $\infty\in X$.  
	Assume the image of $(E,\theta)$ in $\mathfrak c_D$ is contained in $\mathfrak c_D^{\mathrm{rs}}$ and coincides with the image of $t\in \mathfrak t^{\mathrm{rs}}(S)$. Let us construct a map $\Aut(E,\theta)\to T(S)$. 
	
	Passing to étale covers we assume $E$ is trivial and there exists $b\in G(S)$ such that $\ad_b(\theta)=t$. Then $\gamma\in \Aut(E,\theta)$ can be identified with the right multiplication by $\gamma'\in I_\theta(S)$. 
	We use $\ad_b$ to identify $I_\theta$ with $I_t= T$. 
	We hence get an element $\gamma''\in T(S)$. 
	One can check that the resulting map $\Aut(E,\theta)\to T(S)$ doesn't depend on the choice of the trivialization and $b$. 
\end{construction}

\begin{proof}[Proof of Theorem \ref{dec-IM}]

	Let $S$ be a $\bar k$-scheme and $(E,\theta, \tilde{\infty})\in \tilde{\mathbb M}_{G}^{\mathrm{ani}}(S)$ equipped with $\gamma: \hat{\mu}\to \Aut (E,\theta)$. 
	Here $E$ is a $(\mathbb{G}\rtimes \mathrm{Out}(\mathbb G))$-torsor on $X\times S$, 
	$\theta$ is an equivariant map $E\to \mathbf{g}_{D} \times \rho_G$, and $\tilde{\infty}$ is a map $S\to \mathbf t^{\mathrm{rs}}$ lifting the characteristic $S\to \mathfrak c_\infty\cong  \mathbf c$ of $\theta_\infty$. 
	By restricting $\gamma$ to $\infty\times_k S$ and appling Construction \ref{def-4.36} we get a morphism $\gamma_\infty: \hat{\mu}\to {\mathbb T}$ on $S$. This morphism factors through a finite quotient $\mu_N\to {\mathbb T}$ and homomorphisms between finite type diagonalizable groups are Zariski locally constant by \cite[VIII, 1.5]{SGA3II}. Hence $\gamma_\infty$ arises by base change from a homomorphism $\kappa\colon \hat \mu \to \Tb$ over $k$.
	
	Now let $F \subset E$ be the subsheaf consisting of those sections of $E$ on which the actions of $\hat\mu$ through $\gamma$ and $\kappa$ coincide. By \cite[Lemma 5.11]{GWZ20b} this is a $(\mathbb{G} \rtimes \Out(\mathbb{G}))_\kappa$-subtorsor of $E$. Let $\rho_\kappa$ be the $\pi_0(\kappa)$-torsor $\pi_0(F)$ over $X_{\bar k} \times S$.  
	
	\begin{lemma}\label{trival-infty}
		The $\pi_0(\kappa)_\infty$-torsor $\rho_{\kappa,\infty}$ contains a canonical $S$-point. 
	\end{lemma}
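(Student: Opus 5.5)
We will produce the point from the lift $\tilde\infty\colon S\to \mathbf t^{\mathrm{rs}}$, using it to pin down a canonical section of $F_\infty$ up to the identity component of $(\mathbb G\rtimes\Out(\mathbb G))_\kappa$.

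\emph{Step 1: reduce to the trivial torsor.} Restrict to $\infty\times S\cong S$. Over $\infty$ the torsor $\rho_G$ is trivialized as part of the data defining $\widetilde{\Mb}$ in \cite[Construction 5.9]{GWZ20b}. So $E_\infty$ becomes a $\mathbb G$-torsor together with $\theta_\infty\in E_\infty\times^{\mathbb G}\mathbf g_D$, and $\tilde\infty$ lifts its characteristic polynomial to $\mathbf t^{\mathrm{rs}}$.

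\emph{Step 2: the canonical section.} Consider the subsheaf $E'\subset E_\infty$ of sections $e$ for which $\theta_\infty$, written in the trivialization $e$, equals $\tilde\infty\in\mathbf t^{\mathrm{rs}}(S)$. Étale locally such $e$ exist: this is exactly the choice of $b$ with $\ad_b(\theta)=t$ in Construction \ref{def-4.36}. Since $\tilde\infty$ is regular semisimple, two such sections differ by an element of the centralizer $\mathbb T$. Hence $E'$ is a $\mathbb T$-torsor over $S$.

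For $e\in E'$, the automorphism $\gamma(\zeta)$ acts on $e$ by right multiplication by $\gamma_\infty(\zeta)=\kappa(\zeta)\in\mathbb T$; this is precisely the definition in Construction \ref{def-4.36}. So $E'\subset F_\infty$. Since $\kappa$ maps to $\mathbb T$ and $\mathbb T$ is commutative, $\mathbb T\subset(\mathbb G\rtimes\Out(\mathbb G))_\kappa$. As $\mathbb T$ is connected, it lies in the identity component $\mathbb H_\kappa$.

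\emph{Step 3: conclude.} Because $\mathbb T\subset\mathbb H_\kappa$, the image of $E'$ in $\rho_{\kappa,\infty}=\pi_0(F_\infty)=F_\infty/\mathbb H_\kappa$ is a single section. It descends from the étale cover to $S$ because $E'$ is a torsor under a subgroup of $\mathbb H_\kappa$. The section is canonical, since $E'$ depends only on $(E,\theta,\tilde\infty)$ and the trivialization at $\infty$, with no auxiliary choices.

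\emph{Main obstacle.} The delicate point is the compatibility of the two actions in Step 2. We must check that the $\hat\mu$-action via $\gamma$ on a section $e\in E'$ agrees with right multiplication by $\kappa$, including the left/right and sign conventions relating $\gamma'\in I_\theta$ to $\gamma''\in T$ via $\ad_b$. We also need to confirm that $\gamma_\infty=\kappa$ holds on all of $S$, not merely locally, which was arranged earlier by local constancy.
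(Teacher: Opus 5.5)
Your proposal is correct and follows essentially the same route as the paper: your $E'$ is exactly the paper's fiber $K=\theta^{-1}(\tilde{\infty},\infty_{\rho})\subseteq E_\infty$. As in the paper, it is a $\mathbb{T}$-torsor contained in $F$ by Construction~\ref{def-4.36}, and its image in $\rho_{\kappa,\infty}=\pi_0(F)_\infty$ is the desired point. The only cosmetic difference is in the last step: the paper appeals to connectedness of the fibres of $K$, whereas you use $\mathbb{T}\subset\mathbb{H}_\kappa$ to factor the map through $K/\mathbb{T}=S$, which amounts to the same thing.
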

	
	\begin{rmk}
		This statement does not appear in the original paper \cite{GWZ20b}. As a result of Lemma \ref{trival-infty}, the additional assumption \cite[Situation 6.2]{GWZ20b} holds automatically.

	\end{rmk}
	
	\begin{proof}[Proof of Lemma \ref{trival-infty}]
		Consider the fiber $K=\theta^{-1}(\tilde{\infty}, \infty_{\rho})\subseteq E_\infty$. 
		Since the centralizer of $\tilde{\infty}$ in $\mathbb G$ is precisely $\mathbb T$, the space $K$ is a $\mathbb T$-torsor over $S$. In particular $K$ has connected fibers over $S$. By Definition \ref{def-4.36} 
		the actions of $\hat\mu$ through $\gamma$ and $\kappa$ on $K$ coincide, so $K\subseteq F$. Thus $[K/\BT]$ defines a $S$-point in $\rho_{\kappa,\infty}=\pi_0(F)_\infty$. 
	\end{proof}
	Let $S'$ be a connected $\bar k$-scheme of finite type equipped with a $\bar k$-point $s$. By \cite[X. 1.7]{SGA1} we have an isomorphism 
	\[\pi_1(X_{\bar k}\times_{\bar k} S', \bar{\infty}\times s)\cong \pi_1(X_{\bar k},\bar{\infty})\times \pi_1(S',s).\] Using this and Lemma \ref{trival-infty} it follows that $\rho_\kappa$ descends canonically to a $\pi_0(\kappa)$-torsor over $X_{\bar k}$ which we again denote by $\rho_\kappa$. 
    
	We have a $(\mathbb{G} \rtimes \Out(\mathbb{G}))_\kappa$-equivariant morphism $F \to \rho_{\kappa,X_{\bar k} \times S}$ which factors through a 
    morphism $\rho_{\kappa,X_{\bar k} \times S} \to \rho_{X_{\bar k} \times S}$. This morphism is again already defined over $X_{\bar k}$.
	
    Since the image of $\gamma$ fixes $\theta$, it follows that the composition
	\begin{equation*}
		 \bar{\theta}_F: F \to E \to \mathbf{g}_D\times \rho \to \mathbf{g}_D
	\end{equation*}
	factors through the fixed points of $\kappa$ on $\mathbf{g}_D$, that is, through $\mathbf{h}_D$. Thus we obtain a $(\mathbb{G} \rtimes \Out(\mathbb{G}))_\kappa$-equivariant morphism $\theta_F: F \to \mathbf{h}_D \times \rho_\kappa$.

	We let $\mathcal{E}=(\kappa, \rho_\kappa, \rho_\kappa\to \rho)$ be the resulting coendoscopy datum for $G$ over $\bar k$ with a trivialization at $\infty$ given by Lemma \ref{trival-infty}.
	For any $x\in K(\bar k)$, by definition $\theta_F(x)=(\tilde{\infty}, \infty_{\rho_G})$. We conclude that the pair $((F, \theta_F), \tilde{\infty})$ defines an element in $\widetilde{\mathbb{M}}^{G-\infty,\mathrm{ani}}_{\mathcal{E}}(k)$. 
	One can check that the map $(E,\theta, \tilde{\infty})\mapsto ((F, \theta_F), \tilde{\infty})$ is an inverse of $\bigsqcup_{\mathcal{E}}\tilde \mu_{\mathcal{E}}$, which proves the theorem. 
\end{proof}

By taking $k$-groupoids in Theorem \ref{dec-IM}, we obtain the following statement:
\begin{corollary}[correction of {\cite[Theorem 5.14]{GWZ20b}}]
	The morphism of groupoids 
	\begin{equation*}
		\bigsqcup_{\mathcal{E}}\tilde \mu_{\mathcal{E}} \colon  \bigsqcup_{\mathcal{E}} \widetilde{\mathbb{M}}^{G-\infty,\mathrm{ani}}_{\mathcal{E}}(k) \to I_{\hat\mu}\widetilde{\mathbb{M}}^{\mathrm{ani}}_G(k),
	\end{equation*}
	where the disjoint union is taken over isomorphism classes of coendoscopic data $\mathcal{E}=(\kappa, \rho_\kappa, \rho_\kappa\to \rho)$ for $G$ over $k$ equipped with a trivialization at $\infty$, is an equivalence. 
\end{corollary}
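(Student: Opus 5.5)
The plan is to deduce the corollary from Theorem \ref{dec-IM} by Galois descent, along the lines of Remark \ref{desc}. Write $\Gamma=\Gal(\bar k/k)$. Both sides are stacks over $k$. The $k$-groupoid of such a stack is the groupoid of $\Gamma$-equivariant objects in its $\bar k$-groupoid, that is, objects over $\bar k$ equipped with compatible descent isomorphisms. Here we use that the stacks are locally of finite type DM, so descent along $\bar k/k$ is effective and $\Gamma$ acts continuously.

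First I check that the equivalence of Theorem \ref{dec-IM} is $\Gamma$-equivariant. The group $\Gamma$ acts on the set of isomorphism classes of coendoscopic data over $\bar k$ with trivialization at $\infty$ by transport of structure: $\sigma\cdot(\kappa,\rho_\kappa,\rho_\kappa\to\rho_{\bar k})=(\sigma\kappa,\sigma^*\rho_\kappa,\dots)$. For each $\sigma$ there is a canonical isomorphism $\sigma^*\widetilde{\mathbb M}_{\mathcal E}\cong\widetilde{\mathbb M}_{\sigma\mathcal E}$, and $\tilde\mu_{\mathcal E}$ is compatible with these isomorphisms because it is built functorially from $\mathcal E$. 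Moreover, the inverse constructed in the proof of Theorem \ref{dec-IM} is canonical. Its ingredients are the construction of $\kappa$ via Construction \ref{def-4.36}, the subtorsor $F$, and the canonical point of Lemma \ref{trival-infty}. Hence it commutes with the $\Gamma$-action as well.

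Next I take $\Gamma$-fixed objects on both sides. A $k$-point of $I_{\hat\mu}\widetilde{\mathbb M}^{\ani}_G$ is a $\Gamma$-equivariant object over $\bar k$. Its associated $\bar k$-datum $\mathcal E$ therefore satisfies $\sigma\mathcal E\cong\mathcal E$ for all $\sigma$. The key claim is that such an $\mathcal E$ has a \emph{unique} descent to a coendoscopic datum over $k$ with trivialization at $\infty$. The argument runs as follows. First, $\kappa$ is already defined over $k$, by the $\mathbb{Z}$ argument in the proof of the theorem. Second, an isomorphism of data compatible with the trivialization at $\infty$ is unique, since a $\pi_0(\kappa)$-torsor automorphism on connected $X_{\bar k}$ fixing a point is trivial. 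Consequently the isomorphisms $\sigma^*\rho_\kappa\cong\rho_\kappa$ automatically satisfy the cocycle condition and give a descent datum.

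This yields a bijection between isomorphism classes of coendoscopic data over $k$ with trivialization at $\infty$ and $\Gamma$-fixed isomorphism classes over $\bar k$. Uniqueness of isomorphisms also ensures that distinct $k$-data do not merge. For each such $\mathcal E$, the $k$-points of $\widetilde{\mathbb M}^{G-\infty,\ani}_{\mathcal E}$ are precisely the $\Gamma$-equivariant $\bar k$-points of the corresponding $\bar k$-stack. Non-fixed orbits contribute no $k$-points, since an equivariant object must lie in a $\Gamma$-stable component. Combining these facts gives the equivalence of groupoids.

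The main obstacle is the rigidity step: showing that the trivialization at $\infty$ kills all automorphisms of $\mathcal E$, so that the descent data for $\rho_\kappa$ is canonical and uniquely determined. This requires care about whether $\rho_\kappa$ is connected. If it is not, I would instead argue that the morphism $\rho_\kappa\to\rho$ together with the point at $\infty$ still pins it down, or use the canonicity of $F\subset E$ directly. The latter shows that the Galois descent datum on $E$ induces one on $F$, and hence on $\rho_\kappa=\pi_0(F)$, which yields the $k$-structure on $\mathcal E$ without any abstract uniqueness argument.
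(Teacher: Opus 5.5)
Your proof is correct. Its overall shape matches the paper: pass to $\Gal(\bar k/k)$-fixed objects in Theorem~\ref{dec-IM}. You justify the key step, that the coendoscopic datum attached to a $k$-point is defined over $k$, differently.

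\textbf{The paper's route.} The one-line proof observes that the inverse built in the proof of Theorem~\ref{dec-IM} can be run directly on a $k$-point of $I_{\hat\mu}\widetilde{\mathbb{M}}^{\mathrm{ani}}_G$:
\begin{itemize}
\item $\kappa$ is already defined over $k$;
\item the subtorsor $F\subset E$ and $\rho_\kappa=\pi_0(F)$ live over $X$;
\item the point of $\rho_{\kappa,\infty}$ from Lemma~\ref{trival-infty} is $k$-rational.
\end{itemize}
So $\mathcal{E}$ is visibly a $k$-datum. This is exactly the fallback in your last paragraph.

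\textbf{Your main route.} You use only that the isomorphism class of the $\bar k$-datum is Galois-stable. You then upgrade this to a descent datum by rigidity: data with a trivialization at $\infty$ have no nontrivial automorphisms. That rigidity holds as you state it, and your worry about connectedness of $\rho_\kappa$ is unnecessary. An automorphism of a $\pi_0(\kappa)$-torsor is a section of the finite \'etale group scheme obtained by twisting $\pi_0(\kappa)$ by $\rho_\kappa$ via conjugation. Since $X_{\bar k}$ is connected, such a section is determined by its value at one geometric point. Only the connectedness of $X_{\bar k}$ is used, never that of $\rho_\kappa$.

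\textbf{Comparison.} Your route gives an explicit account of several points the paper leaves implicit, all resting on this same rigidity:
\begin{itemize}
\item $k$-isomorphism classes of data inject into $\bar k$-classes;
\item the Galois action on a component $\widetilde{\mathbb{M}}_{\mathcal{E}}$, induced through the unique isomorphism $\sigma\mathcal{E}\cong\mathcal{E}$, agrees with the one coming from the $k$-structure of $\mathcal{E}$;
\item Galois orbits of size greater than one contribute no $k$-points.
\end{itemize}
The paper's route is shorter because canonicity at the level of objects gives the $k$-structure on $\mathcal{E}$ directly, with no uniqueness-of-isomorphism argument.
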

\begin{proof}
    This follows from Theorem \ref{dec-IM} by noting that e.g. the proof of Theorem \ref{dec-IM} shows that for any point of $I_{\hat\mu}\widetilde{\mathbb{M}}^{\mathrm{ani}}_G(k)$, the associated coendoscopic datum $\CE$ is defined over $k$.
\end{proof}

We also need a version of this statement for rigidified stacks:
\begin{construction} \label{desc-IM-cor1}
       Let $\CE$ be a coendoscopic datum for $G$ over $k$ equipped with a trivialization at $\infty$. Since the actions of $\Out(\BG)$ on $\BG$ and $\Out(\BH)$ on $\BH_\kappa$ preserve $Z(\BG)$ and $Z(\BH_\kappa)$ we have $Z(G)=Z(\BG) \times^{\pi_0(\kappa)} \rho_\kappa$ and $Z(H_\CE)=Z(\BH_\kappa)\times ^{\pi_0(\kappa)} \rho_\kappa$. Furthermore, the inclusion $Z(\BG) \into Z(\BH_\kappa)$ is equivariant with respect to the action of $\pi_0(\kappa)$ on $Z(\BG)$ through $o_\BG$ and on $Z(\BH_\kappa)$ through $o_{\BH}$. Hence by twisting this inclusion with $\rho_\kappa$ we obtain a natural inclusion $Z(G) \into Z(H_\CE)$ over $X$. This induces an inclusion $Z(X,G) \into Z(X,H_\CE)$ of the Weil restrictions of these group schemes to $\Spec(k)$.

One sees e.g. from Proposition \ref{TwistedDesc} that $Z(G)=Z(\Gb) \times^{\pi_0(\kappa)} \rho_k$ naturally acts on every section of $[\mathfrak{h}_{ \CE,D}/H_\CE]^{\beta_\CE}$. Hence the group scheme $Z(X,H_\CE)$ acts naturally on every section of $\widetilde{\BM}^{G-\infty,\ani}_{H_\CE}$. So through the above inclusion $Z(X,G) \into Z(X,H_\CE)$ the same is true for the group $Z(X,G)$. We let $\widetilde{\Mc}^{G-\infty,\mathrm{ani}}_{\mathcal{E}}$ be the rigidification of $\widetilde{\mathbb{M}}^{G-\infty,\mathrm{ani}}_{\mathcal{E}}$ by this action of $Z(G,X)$.
\end{construction}

Then analogously to Corollary \ref{desc-IM-cor1}, by rigidifying the equivalence of Theorem \ref{dec-IM} and taking $k$-groupoids we obtain:
\begin{corollary}[correction of {\cite[Corollary 5.24]{GWZ20b}}]\label{dec-IM2}
		The morphism of groupoids 
	\begin{equation*}
		\bigsqcup_{\mathcal{E}}\tilde \mu_{\mathcal{E}} \colon  \bigsqcup_{\mathcal{E}} \widetilde{\Mc}^{G-\infty,\mathrm{ani}}_{\mathcal{E}}(k) \to I_{\hat\mu}\widetilde{\Mc}^{\mathrm{ani}}_G(k),
	\end{equation*}
	where the disjoint union is taken over isomorphism classes of coendoscopic data $\mathcal{E}=(\kappa, \rho_\kappa, \rho_\kappa\to \rho)$ for $G$ over $k$ equipped with a trivialization at $\infty$, is an equivalence. 
\end{corollary}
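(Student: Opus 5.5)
We deduce Corollary \ref{dec-IM2} from the unrigidified statement, the Corollary correcting \cite[Theorem 5.14]{GWZ20b}, by checking that the equivalence $\bigsqcup_{\CE}\tilde\mu_\CE$ of Theorem \ref{dec-IM} is compatible with the $Z(X,G)$-actions on both sides, and then passing to rigidifications. On the target, $I_{\hat\mu}\widetilde{\Mc}^{\ani}_G$ is the twisted inertia of the rigidification $\widetilde{\Mc}^{\ani}_G=\widetilde{\Mb}^{\ani}_G\!\fatslash Z(X,G)$. On the source, each $\widetilde{\Mb}^{G-\infty,\ani}_\CE$ is rigidified by $Z(X,G)$ via the inclusion $Z(X,G)\into Z(X,H_\CE)$ of Construction \ref{desc-IM-cor1}.

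\emph{Step 1: equivariance.} Locally, $\mu_\CE$ is induced by the map $[\mathbf h_D\times\rho_\kappa/(\BG\rtimes\Out(\BG))_\kappa]\to[\mathbf g_D\times\rho/(\BG\rtimes\Out(\BG))]$ from Proposition \ref{TwistedDesc}. The central automorphisms given by $Z(\BG)\subset(\BG\rtimes\Out(\BG))_\kappa$, twisted by $\rho_\kappa$, map to the central automorphisms of $G$-Higgs bundles given by $Z(G)=Z(\BG)\times^{\pi_0(\kappa)}\rho_\kappa$. These central automorphisms commute with the $\hat\mu$-action through $\kappa$, so $\tilde\mu_\CE$ intertwines the $Z(X,G)$-automorphisms of objects of $\widetilde{\Mb}^{G-\infty,\ani}_\CE$ with those of the corresponding objects of $I_{\hat\mu}\widetilde{\Mb}^{\ani}_G$. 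Hence $\tilde\mu_\CE$ descends to a morphism of rigidifications.

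\emph{Step 2: inertia and rigidification commute.} We must identify $I_{\hat\mu}(\widetilde{\Mb}^{\ani}_G\!\fatslash Z(X,G))$ with $(I_{\hat\mu}\widetilde{\Mb}^{\ani}_G)\!\fatslash Z(X,G)$ on $k$-groupoids. This is the main obstacle. A homomorphism $\hat\mu\to\Aut(E,\theta)/Z(X,G)$ need not lift to $\Aut(E,\theta)$. The obstruction to lifting is an extension class of $\hat\mu$ by the finite diagonalizable group $Z(X,G)$.

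To handle this, work with the profinite group $\hat\mu$ and its finite quotients $\mu_N$. Since $Z(X,G)$ is of multiplicative type and $\hat\mu$ is procyclic, such extensions split after passing to $\mu_{NM}$ for suitable $M$, that is, after precomposing with $\hat\mu\to\hat\mu$. By definition of $I_{\hat\mu}$ as a colimit over $N$, every $\hat\mu$-point of the rigidified inertia therefore lifts. Two lifts differ by a homomorphism $\hat\mu\to Z(X,G)$. This discrepancy would shift $\kappa$ by a central cocharacter-type element, and such a shift is absorbed by the same rigidification on the source side. This is precisely how \cite[Corollary 5.24]{GWZ20b} was deduced, and that argument carries over verbatim once the source is replaced by twisted moduli.

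\emph{Step 3: taking $k$-points.} Combining Steps 1 and 2, the equivalence on $\bar k$-stacks descends to an equivalence of rigidified stacks. As in the proof of the preceding corollary, the coendoscopic datum attached to a $k$-point is defined over $k$, so taking $k$-groupoids and using Remark \ref{desc} gives the equivalence restricted to data $\CE$ over $k$ with trivialization at $\infty$.
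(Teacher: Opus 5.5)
Your Steps 1 and 3 are the paper's entire argument. The paper rigidifies both sides of the equivalence of Theorem \ref{dec-IM} by $Z(X,G)$, using the compatibility from Construction \ref{desc-IM-cor1}, and then takes $k$-groupoids as in the preceding corollary. It contains nothing corresponding to your Step 2. That step, which you rightly single out as the crux, does not work.

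The claim that the discrepancy between two lifts ``is absorbed by the same rigidification on the source side'' is false. Rigidifying $\widetilde{\Mb}^{G-\infty,\ani}_{\CE}$ by $Z(X,G)$ only divides automorphism groups by $Z(X,G)$. It never identifies non-isomorphic objects, and certainly not objects lying in different summands of $\bigsqcup_{\CE}$.

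To see the failure, let $\chi\colon\hat\mu\to Z(X,G)$ be nontrivial. The inverse built in the proof of Theorem \ref{dec-IM} sends the lifts $\tilde\gamma$ and $\tilde\gamma\chi$ to objects over data $\CE$ and $\CE'$ with $\kappa'=\kappa\cdot\chi_\infty$. Here $\chi_\infty\neq 1$, since a global section of $Z(G)$ over the connected proper curve $X$ is determined by its value at $\infty$. Hence $\CE\not\cong\CE'$. These two objects remain distinct after rigidification, yet they have the same image in $I_{\hat\mu}\widetilde{\Mc}^{\ani}_G$.

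Concretely, take $G=\SL_2$ and a generic anisotropic $a$, so that $\Aut(E,\theta)=\mu_2=Z(X,G)$. The datum with $\kappa=1$ and the datum with $\kappa\colon\hat\mu\to\mu_2\subset\Tb$ both have $H_{\CE}=G$ and trivial $\beta_{\CE}$. So the source has two objects over $(E,\theta)$. On the other hand, $(E,\theta)$ has trivial automorphism group in $\widetilde{\Mc}_G$, so $I_{\hat\mu}\widetilde{\Mc}_G$ has a single object over it.

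So with the target read as the genuine twisted inertia of the rigidified stack, as you set it up, Step 2 cannot give injectivity on isomorphism classes. Appealing to \cite[Corollary 5.24]{GWZ20b} does not close this gap. What your lifting argument does show is two things:
\begin{itemize}
\item the comparison is essentially surjective;
\item the objects coming from $\CE$ and from any central shift $\kappa\mapsto\kappa\cdot\chi_\infty$ of it become isomorphic in the target.
\end{itemize}
In other words, $I_{\hat\mu}\widetilde{\Mc}_G$ is the quotient of $(I_{\hat\mu}\widetilde{\Mb}_G)/\!\!/Z(X,G)$ by $\Hom(\hat\mu,Z(X,G))$. Its natural decomposition is therefore indexed by coendoscopic data modulo central shifts, not by the data themselves.

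The paper's one-line argument, by contrast, produces on the right the $Z(X,G)$-rigidification of $I_{\hat\mu}\widetilde{\Mb}^{\ani}_G$ and identifies it with $I_{\hat\mu}\widetilde{\Mc}^{\ani}_G$ without comment. If you read the target that way, your Steps 1 and 3 already form the whole proof, and Step 2 should be dropped rather than repaired.
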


\section{Comparing stabilised point-counts}\label{stpc}
To substitute the moduli spaces of twisted Higgs bundles into the main argument of \cite{GWZ20b}, we need the following result comparing stabilized points counts for moduli spaces of twisted and untwisted Higgs bundles. For this we work over a finite field $k$ and fix a smooth connected projective curve $X$ over $k$ as well as a quasi-split reductive group scheme $H$ over $X$.

Throughout this section we work over the anisotropic locus $\widetilde{\A}_{H}^{ani} \subset \widetilde{\A}_{H}$. To alleviate the notation, we will often drop the superscript $\ani$ from now on, for example we will write $\widetilde{\Mc}_{\Ee}$ instead of $\widetilde{\Mc}_{\Ee}^{ani}$ etc. By Proposition \ref{heart}, $\widetilde{\Mc}_{H}$ and $\widetilde{\Mc}_{H}^\beta$ are smooth DM-stacks, proper over $\widetilde{\A}_{H}$.

\begin{theorem}\label{stabct}
For any $Z$-gerbe $\beta$ over $X$ and any $a \in \widetilde{\A}_{H}(k)$ the following identity holds:
\[\#^{stab} \widetilde{\Mc}^\beta_{H,a}(k) = \#^{stab} \widetilde{\Mc}_{{H},a}(k).\]
\end{theorem}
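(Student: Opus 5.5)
The plan is to prove the identity via the product formula. We show that both stabilised counts factor as a product of local contributions at the finitely many bad places $\nu \in |X-U|$, where $U$ is the locus over which the cameral cover of $a$ is étale. We then show that these local factors do not depend on the central gerbe $\beta$.

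Recall that for $a$ anisotropic,
\[\#^{stab}\widetilde{\Mc}_{H,a}(k) = \sum_{x} \frac{1}{|\Aut(x)|},\]
where $x$ runs over isomorphism classes of $k$-points of the fibre, weighted against the $\Pb_{H,a}$-action. Equivalently, it is the $\kappa=1$ part of the Fourier expansion along $\pi_0(\Pb_{H,a})$. A convenient formulation is
\[\#^{stab}\widetilde{\Mc}_{H,a}(k) = \#\Pb^0_{H,a}(k)\cdot \#[\Mc_{H,a}/\Pb_{H,a}](k),\]
up to a constant that depends only on $a$ and $H$ and not on $\beta$. This follows from the Lang isogeny argument, since $\pi_0(\Pb_{H,a})$ is finite and $\Pb^0_{H,a}$ is connected. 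The same formula holds for $\Mc^\beta_{H,a}$ with the same group $\Pb_{H,a}$, because the $\Pb_H$-action is defined through $J$, which is insensitive to the twist. This is exactly the mechanism behind Proposition \ref{heart}(iv). It therefore suffices to compare the groupoid counts of $[\Mc^\beta_{H,a}/\Pb_{H,a}](k)$ and $[\Mc_{H,a}/\Pb_{H,a}](k)$; the $Z$-rigidification contributes the same factor $\#Z(X,H)(k)$ on both sides.

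Next we invoke the product formula (Proposition \ref{prop:productformula}). It writes $[\Mb^\beta_{H,a}/\Pb_{H,a}](k)$ as the product over $\nu \in |X-U|$ of $[\Mb^\beta_{H,a,\nu}/\Pb_{H,a,\nu}](k)$, where the local objects are twisted Higgs bundles on the formal disc $\Spec \Oc_\nu$ with their affine-Springer-type local Picard actions. The analogous formula for $\beta$ trivial is Ngô's Proposition 4.15.1. The main step is then to show that $\beta$ becomes trivial on each formal disc $\Spec \Oc_\nu$, so that the local factors for $\beta$ and for the trivial gerbe are canonically equivalent. The plan for this step is as follows. First, $H^2_{\et}(\Spec \Oc_\nu, Z) \cong H^2(k_\nu, Z)$ by Henselianity. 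Second, $Z$ is an extension of the finite group $\pi_0(Z)$ by a torus. Third, for finite residue fields the cohomology $H^2(k_\nu,-)$ of a finite module vanishes, since $\mathrm{cd}(\hat{\Zb}) = 1$, and $H^2$ of a torus over a finite field also vanishes by Lang's theorem together with $\mathrm{cd}=1$. Hence $\beta|_{\Oc_\nu}$ is trivial. A choice of trivialisation identifies $\Tw^\beta_H|_{\Oc_\nu}$ with $B H$, and therefore identifies the local stacks of twisted Higgs bundles with the untwisted ones compatibly with $\Pb_{H,a,\nu}$. Different trivialisations differ by a $Z$-torsor on $\Oc_\nu$, which acts through $Z \subset J_a$. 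This only changes the equivalence by an element of $\Pb_{H,a,\nu}$, so the groupoid counts are unaffected.

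Multiplying the local equalities gives equality of the groupoid counts, and hence of the stabilised counts. I expect the main obstacle to be the first reduction: checking that the stabilised count really is determined by the quotient groupoid $[\Mc_{H,a}/\Pb_{H,a}](k)$ and by $\Pb_{H,a}$ alone, with the same normalisation in the twisted and untwisted cases. The concern is that the stabilised count is a $\kappa=1$ isotypic piece rather than a naive count, and one must make sure the twisted fibre is non-empty over $k$ and not merely over $\bar k$. For non-emptiness I would use Proposition \ref{heart}(iv): $\Mb^{\beta,\reg}_{H,a}$ is a $\Pb_{H,a}$-torsor, so its class lies in $H^1(k,\Pb_{H,a})$, and the stabilised count is computed by weighting with $\Pb^0$, which removes this torsor class through Lang's theorem. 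As a fallback, I would use $p$-adic integration: lift both fibrations to $\Oc_F$ and express each stabilised count as the volume of the $\Oc_F$-points of the relevant Hitchin fibre. Since $\beta$ trivialises locally, the two fibres are locally isomorphic torsors over the same Prym, so the volumes agree.
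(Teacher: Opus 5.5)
Your proposal is correct and is essentially the paper's second proof. It uses the product formula (Proposition \ref{prop:productformula}), triviality of the gerbe on each formal disc $X_\nu$, and the identity $\#^{stab}\Mb^\beta_{H,a}(k)=\#\Pb^0_{H,a}(k)\cdot\#[\Mb^\beta_{H,a}/\Pb_{H,a}](k)$, which holds whether or not $\Mb^\beta_{H,a}(k)$ is non-empty, so your non-emptiness worry is unnecessary. The one difference is that you trivialise the $Z$-gerbe $\beta$ itself on $X_\nu$, via Henselian comparison and vanishing of $H^2$ over finite fields; the paper only trivialises the induced $J_a$-gerbe $\beta_a$, using that $J_a(\overline{k(\nu)})$ is torsion together with Bouthier's lifting result, and your slightly stronger local statement implies theirs.
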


\begin{rmk}
    As observed in \cite[Remark 1.2]{GWZ20a}, the fact that $ \mathbb{M}^\beta_{H,a}\to\mathcal{M}^\beta_{H,a}$ is a gerbe banded by $Z$ implies they have isomorphic $\ell$-adic cohomology groups, so
	\[\#^{stab} \widetilde{\mathbb{M}}^\beta_{H,a}(k)=\#^{stab} \widetilde{\mathcal{M}}^\beta_{H,a}(k),\qquad
    \#^{stab} \widetilde{\mathbb{M}}_{H,a}(k)=\#^{stab} \widetilde{\mathcal{M}}_{H,a}(k).\]
\end{rmk}
\begin{corollary}\label{cor:stabcoh}
Under the same assumptions as in Theorem \ref{stabct}, for any prime $\ell$ different from the characteristic of $k$, there exists an abstract isomorphism of stabilised $\ell$-adic cohomology groups
$$H^*(\widetilde{\mathcal{M}}^\beta_{H,a})^{stab} \simeq H^*(\widetilde{\mathcal{M}}_{H,a})^{stab}.$$
\end{corollary}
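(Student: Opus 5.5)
The plan is to deduce Corollary \ref{cor:stabcoh} from Theorem \ref{stabct}, applied over all finite extensions of $k$, together with purity. Fix $a \in \widetilde{\A}_H(k)$. By Proposition \ref{heart}, both $\widetilde{\Mc}^\beta_{H,a}$ and $\widetilde{\Mc}_{H,a}$ are proper DM stacks over $k$. Moreover, the fibre $\widetilde{\Mc}_{H,a}$ of the smooth family over the anisotropic locus acquires an action of the Picard stack $\Pb_{H,a}$, and similarly for the twisted fibre, with the same $\Pb_{H,a}$. The stabilised cohomology $H^*(-)^{stab}$ is the summand on which $\pi_0(\Pb_{H,a})$ acts trivially, and the stabilised count is obtained from it via the Grothendieck--Lefschetz trace formula:
\[\#^{stab}\widetilde{\Mc}_{H,a}(k_n) = \sum_i (-1)^i \Tr\bigl(\mathrm{Frob}^n \mid H^i_c(\widetilde{\Mc}_{H,a,\bar k})^{stab}\bigr),\]
and likewise in the twisted case. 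Here $k_n/k$ is the degree-$n$ extension, and compact support agrees with ordinary cohomology by properness.

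Step one: apply Theorem \ref{stabct} to the base change of everything to $k_n$, for every $n \geq 1$. Its hypotheses are stable under finite extension, since $\beta$ pulls back to a $Z$-gerbe and $a$ becomes a $k_n$-point. This gives equality of the two alternating Frobenius trace sums for all $n$. A standard linear-independence argument for the functions $n \mapsto \lambda^n$ then shows that the virtual Frobenius modules $\sum_i (-1)^i [H^i(-)^{stab}]$, counted with eigenvalue multiplicities, agree.

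Step two, the main obstacle, is separating the degrees so that cancellation in the alternating sum cannot occur. I would argue via weights. The fibres are not smooth, so instead I would use the total space: smoothness of $\widetilde{\Mc}^{\beta}_H$ and $\widetilde{\Mc}_H$, properness over $\widetilde{\A}_H$ and the decomposition theorem, following Ngô's support-theorem framework. The goal is to show that $H^i$ of the fibre is pure of weight $i$. Alternatively, I would use the $\Pb_{H,a}$-action and the product formula (Proposition \ref{prop:productformula}), which identifies both stabilised cohomologies with cohomology of $\Pb_{H,a}^0$ tensored with local contributions that are insensitive to $\beta$. The second route might give the isomorphism directly, and I would try it first if purity for the twisted fibre proves hard to get.

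Granting purity, step three is formal: eigenvalues in degree $i$ have absolute value $q^{i/2}$, so the equality of virtual characters from step one separates by weight into equalities of characteristic polynomials in each degree. In particular the dimensions agree in each degree, which gives the abstract isomorphism of $\overline{\Qb}_\ell$-vector spaces (indeed of Frobenius modules up to semisimplification). If full purity is unavailable, one can still apply the same argument over a Henselian trait through $a$ with generic point in the locus where the fibres are abelian-variety torsors, using proper base change. This reduces to $a$ generic, where purity is clear, but I expect this specialization step to be the delicate point.
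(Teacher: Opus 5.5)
Your Step 2 is where the argument breaks. The target statement ``$H^i$ of the fibre is pure of weight $i$'' is not just unproved; it is false in general. Weil II and the decomposition theorem give purity of the \emph{complex} $R(\chi^{\beta}_H)_*\overline{\Qb}_\ell$ on the base, using the smooth total space and the proper Hitchin map. For a pure complex, however, the stalk $\mathcal{H}^i$ at $a$, i.e.\ $H^i(\widetilde{\Mc}_{H,a})$, is only mixed of weights $\le i$.

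Singular Hitchin fibres really do carry lower weights. Already the compactified Jacobian of an integral curve with one node has a weight-$0$ class in $H^1$ (a nodal cubic is its own compactified Jacobian), and the same happens for fibres over nodal spectral curves. Without purity, the equality of alternating traces from Step 1 cannot be split degree by degree, since weight-$0$ classes in neighbouring degrees can cancel.

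Your fallback does not repair this. Proper base change identifies the special fibre's cohomology with that of the total space over the strict Henselisation. The specialisation map to the generic fibre is not an isomorphism over the discriminant; in the nodal example $\dim H^1$ drops from $2$ to $1$.

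The product-formula route is also not available as stated. Proposition \ref{prop:productformula} is an equivalence of groupoids of points, which feeds point counts. It is not a K\"unneth-type decomposition of cohomology.

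The missing idea is to use purity only where it holds, namely globally on the base. The paper compares the complexes $(R(\chi^{\beta}_H)_*\overline{\Qb}_\ell)^{stab}$ and $(R(\chi_H)_*\overline{\Qb}_\ell)^{stab}$ on $\widetilde{\A}_H$:
\begin{enumerate}
\item Both are pure, as direct summands of pure complexes.
\item Theorem \ref{stabct}, applied at \emph{every} point of $\widetilde{\A}_H$ over every finite extension of $k$, shows they have the same trace function.
\item By Chebotarev density their semisimplifications agree.
\item By the decomposition theorem they are already semisimple, hence isomorphic.
\end{enumerate}
Only after this does one take the stalk at $a$ via proper base change, so no purity of an individual fibre is ever needed.

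A purely fibrewise argument, closer to your second instinct, also works. As in the proof of Proposition \ref{prop:productformula}, $H^2_{\et}(X_{\bar k},J_a)=0$ makes $\beta_a$ trivial over a finite extension of $k$. This gives a $\Pb_{H,a}$-equivariant isomorphism of the two fibres there, and hence of their stable cohomology over $\bar k$.
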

\begin{proof}
By smoothness of the Deligne-Mumford stack $\widetilde{\mathbb{M}}^\beta_{H}$ and properness of the Hitchin map $\chi^{\beta}_H$, the derived pushforward $R(\chi_H^{\beta})_* \overline{\Qb}_{\ell}$ is pure by virtue of Deligne's Weil II, \cite{deligne1980conjecture}. In particular, the direct summand of $\pi_0(\Pc_H)$-invariants is also pure.

This property holds for all possible central twists, including the trivial one. The comparison of point-counts proven in Theorem \ref{stabct} yields therefore an equality of Frobenius traces of the complexes of sheaves 
$(R(\chi^{\beta}_H)_* \overline{\Qb}_{\ell})^{stab}$ and $(R(\chi_H)_* \overline{\Qb}_{\ell})^{stab}$. By Chebotarev Density, their semisimplifications are isomorphic. Since these objects in the derived category of constructible $\ell$-adic sheaves are already semisimple by virtue of the Decomposition Theorem \cite{BBD82}, we conclude the existence of an isomorphism of $(R(\chi^{\beta}_H)_* \overline{\Qb}_{\ell})^{stab}$ and $(R(\chi_H)_* \overline{\Qb}_{\ell})^{stab}$. Passing to stalks, and applying Proper Base Change we obtain the requisite statement.
\end{proof}

In the following two subsections, we give two different proofs of this fact, one using $p$-adic integration and one using a product formula for $\Mb^\beta_{H}$.

\subsection{Proof via $p$-adic integration}

We write $F=k((t))$ and $\Oc = k[[t]]$. In this section we consider the pullback of $ \widetilde{\Mc}_{H}^\beta$ and $\widetilde{\Mc}_{{H}}$  to the formal disc $\Spec(\Oc)$, or equivalently we could pull back the curve $X$ and all the auxiliary data to $\Spec(\Oc)$ and consider the relative moduli stacks. 

The regular loci $\Mc_{H}^{\beta,\reg}$  and $\Mc_{H}^{\reg}$ are both $\Pc_{H}$-torsors by Proposition \ref{heart} and \cite[Proposition 4.3.3]{Ng10} respectively. In particular there exists a $\Pc_{H}$-torsor $\tau$ over $\widetilde{\A}_{H}$ such that 

\begin{equation}\label{tauiso}
\Mc_{H}^{\beta,\reg} \cong \Mc_{H}^{\reg,\tau} =\Mc_{H}^{reg} \times^{\Pc_{H}}_{\widetilde{\A}_{H}} \tau. \end{equation}

\begin{rmk}
    The equivalence \eqref{tauiso} does not extend beyond the regular locus, as can already be seen in the situation of Example \ref{slex}. The moduli stack $\Mc^\beta_{\SL_n}$ is smooth if $\beta$ comes from a line bundle with degree coprime to $n$, but singular in general. 
\end{rmk}

Since the coarse moduli spaces are proper over the Hitchin base, \eqref{tauiso} induces an isomorphism of $F$-analytic manifolds
\begin{equation}\label{mfld}\widetilde{M}^\beta_{{H}}(\Oc)^\natural \cong \widetilde{M}_{H}^{\tau}(\Oc)^\natural,  \end{equation}
where $ \widetilde{M}^\tau_{H}(\Oc)^\natural =  \widetilde{M}^\tau_{H}(\Oc) \cap  \widetilde{M}_{H}^{reg,\tau}(F)$ and $\widetilde{M}^\beta_{H}(\Oc)^\natural = \widetilde{M}^\beta_{H}(\Oc) \cap \widetilde{M}_{H}^{\beta,reg}(F)$.

\begin{lemma}\label{lift} A point $x \in \widetilde{M}^\beta_{H}(\Oc)^\natural $ lifts to $\widetilde{\Mc}^\beta_{H}(\Oc)$
if and only if the image of $x$ under \eqref{mfld} lifts to $\widetilde{\Mc}^\tau_{H}(\Oc)$.   
\end{lemma}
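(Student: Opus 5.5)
The plan is to reduce the question of lifting to a local criterion on the generic fibre. This criterion should be expressible purely through the $\Pc_H$-action on the regular locus, and the isomorphism \eqref{tauiso} is $\Pc_H$-equivariant, so it will then preserve the criterion.

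Let $x \in \widetilde{M}^\beta_{H}(\Oc)^\natural$, with image $a\in \widetilde{\A}_H(\Oc)$. Since $x_F$ lies in the regular locus, it lifts to a point $\tilde x_F\in \widetilde{\Mc}^{\beta,\reg}_H(F)$, possibly after an unramified extension; over $F$ the regular locus is a $\Pc_H$-torsor. The gerbe $\widetilde{\Mc}^\beta_H\to \widetilde M^\beta_H$ has automorphism groups $\pi_0(\Pc_{H,a})$, or more precisely $J_a(X)$ modulo the central part. Write $x$ as an $\Oc$-point of the coarse space. Then $x$ lifts to $\widetilde{\Mc}^\beta_H(\Oc)$ exactly when the residual gerbe obstruction vanishes. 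By properness of the coarse moduli map and the valuative criterion for the DM-stack, this obstruction is a class in $H^1(F,\pi_0(\Pc_{H,a_F}))$ modulo the image of the unramified part. Following \cite[Section 4]{GWZ20a}, I would express this class as the obstruction of the $\Pc_{H,a}$-torsor $\tau$ evaluated at $x$, together with the pairing with the Tate duality/Kottwitz class of $a$.

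Concretely, I would use the following step. Stack points of $\widetilde{\Mc}^\beta_H$ over $x_F$ form a torsor under $\Pc_{H,a_F}$ modulo those $F$-points fixing $x_F$. By \eqref{tauiso} these are identified $\Pc_H$-equivariantly with the corresponding set for $\widetilde{\Mc}^\tau_H$. Any $\Oc$-lift of $x$ restricts to an $F$-lift in the regular locus. Because $\widetilde{\Mc}^{\beta}_H$ is separated DM and $\widetilde{\Mc}^{\beta,\reg}_H$ is open dense in each fibre (Proposition \ref{heart}), an $F$-lift extends to $\Oc$ if and only if it extends on the coarse space. That extension is given by $x$ and holds automatically, up to a finite étale base change that must be trivial for the extension to exist. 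So the condition that an $\Oc$-lift exists becomes: some $F$-point of the stack over $x_F$ is unramified, meaning its class under the inertia gerbe is trivial in $H^1(\Oc^{sh}/\Oc)$-cohomology.

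The inertia groups on both sides are given by the same group $\pi_0(\Pc_{H,a})$. This holds because both regular loci are $\Pc_H$-torsors, and automorphisms on the full moduli coincide with stabilisers in $\Pc_H$, which is a local statement by the product-formula structure. Hence the two conditions are matched by \eqref{mfld}.

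The main obstacle is the claim that the stabiliser group schemes of $x_\Oc$ in $\widetilde{\Mc}^\beta_H$ and of its image in $\widetilde{\Mc}^\tau_H$ agree off the regular locus; \eqref{tauiso} does not extend there. To handle it, I would first try the following argument. For anisotropic $a$ both automorphism groups embed into $J_a(X\times \Oc)$, since Higgs automorphisms are sections of the regular centralizer. Hence the gerbe obstructions on both sides can be detected after restriction to the generic point, where \eqref{tauiso} applies.
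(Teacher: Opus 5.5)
There is a genuine gap, and it sits exactly where you place the main obstacle. The only link between the two sides in your argument is \eqref{tauiso}. That isomorphism lives on the regular locus, i.e.\ over the generic point of $\Spec \Oc$. Whether $x$ lifts, however, is decided at the special fibre, which may be non-regular.

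For a separated tame DM stack, an $F$-point whose coarse image extends to $\Oc$ only extends to an $[\Oc_L/\mu_r]$-point. This requires a totally ramified base change, not an \'etale one. The point $x$ lifts iff $r=1$, i.e.\ iff the injective map from $\mu_r$ to the stabiliser of the special point is trivial.

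This is not determined by generic data, even when the stabiliser groups agree. Let $\mathcal{X}_1=[\mathbb{A}^2/\mu_2]$ with $\mu_2$ acting on the first coordinate and coarse map $(u,v)\mapsto(u^2,v)$. Let $\mathcal{X}_2=[\mathbb{A}^2/\mu_2]$ with $\mu_2$ acting on the second coordinate and coarse map $(u,v)\mapsto(u,v^2)$. Both are isomorphic to the coarse space $\mathbb{A}^2$ over $\{ab\neq 0\}$, and both have stabiliser $\mu_2$ at the origin. Yet the $\Oc$-point $(t,t^2)$ lifts to $\mathcal{X}_2$ but not to $\mathcal{X}_1$.

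So two of your claims are false in general: that an $F$-lift extends to $\Oc$ iff it extends on the coarse space, and that the obstructions can be detected after restriction to the generic point, where \eqref{tauiso} applies. Equality of stabilisers, or a common embedding of them, cannot close the argument.

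Several supporting claims are also incorrect:
\begin{itemize}
\item The map $\widetilde{\Mc}^\beta_H\to\widetilde{M}^\beta_H$ is not a gerbe, since stabilisers jump off the regular locus.
\item Stabilisers are $H^0(X,I_\theta)$ for the full centraliser $I_\theta$, not $\pi_0(\Pc_{H,a})$; you also conflate $\pi_0(\Pc_{H,a})$ with $J_a(X)$.
\item The natural map $J_a\to I_\theta$ is an isomorphism only where $\theta$ is regular. So automorphisms of non-regular points need not lie in $J_a(X\times\Oc)$; they only embed into sections of $J_a$ over the generic point of $X$. Nothing forces them to coincide with stabilisers for the $\Pc_H$-action, and the product formula does not give this.
\end{itemize}

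What is missing is a comparison valid off the regular locus. The paper obtains one globally. First it produces an $[\Oc_L/\mu_r]$-point on the $\tau$-side from the tame DM structure. It then base changes to $\bar k$ and uses that $\beta$ is $N$-torsion to pass to a degree-$N$ cover $Y\to X$ on which $\beta$, and hence $\tau$, becomes trivial. Over $Y$ the twisted and untwisted moduli stacks are isomorphic as separated DM stacks everywhere, not just on the regular locus. There the extension of $F$-points can be compared directly, which forces $r=1$.
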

\begin{proof} Let us assume first $x$ that lifts to $\widetilde{\Mc}^\beta_{H}(\Oc)$. By general properties of tame DM-stacks,\cite[Proposition 2.12 and Construction 2.13]{GWZ20a}, the image of $x$ under \eqref{mfld} lifts to an $[\Oc_L/\mu_r]$-point of $\widetilde{\Mc}^\tau_{H}$, where $\Oc_L$ denotes the ring of integers of a totally ramified extension $L$ of $F$ of degree $r$. In order to show that $r=1$ we may replace $k$ by $\overline{k}$ and $\Oc$ by $\Oc^{ur} = \overline{k}[[t]]$. Then by definition the Z-gerbe $\beta$ is torsion of some order $N$. Thus by \cite[\href{https://stacks.math.columbia.edu/tag/0AMB}{Tag 0AMB}]{stacks-project} we may pass to a degree $N$ cover  $Y  \to X$ such that the pull back of $\beta$, and thus also of $\tau$, is trivial over $Y$. Thus the moduli stacks $\widetilde{\Mc}^\beta_{H}(Y)$ and $ \widetilde{\Mc}_{H}(Y)$ are isomorphic separated DM-stacks and therefore an $F$-point extends to an $\Oc$-point in one if and only if it does in the other. The converse implication is proven the same way. 
\end{proof}

In order to compare integrals on $\widetilde{M}^\beta_{H}(\Oc)^\natural \cong \widetilde{M}_{H}^{\tau}(\Oc)^\natural$ we use that by Proposition \ref{heart} both spaces are torsors for a smooth group scheme on the regular locus, whose complement has codimension $2$. Thus picking a relative trivializing form we obtain volume forms $\omega_{\beta}$ and $\omega_{H}$ on both spaces as in \cite[Lemma 6.13]{GWZ20b}, compatible with \eqref{tauiso}. 
Finally, let us denote by $\widetilde{M}^\beta_{H}(\Oc)^{lift} \subset \widetilde{M}^\beta_{H}(\Oc)^\natural$ the subset of $\Oc$-point that lift to $\widetilde{\Mc}^\beta_{H}(\Oc)$ and similarly $ \widetilde{M}_{H}^{\tau}(\Oc)^{lift}\subset  \widetilde{M}_{H}^{\tau}(\Oc)^\natural$. Further for $a \in \widetilde{\A}_{H}(k)$ we write $\widetilde{M}^\beta_{H}(\Oc)^{lift}_a,\widetilde{M}_{H}(\Oc)^{lift}_a$ for the subsets $\Oc$-points which specialize to $a$ in the Hitchin base. 

Then the orbifold formula \cite[Theorem 2.21]{GWZ20b} together with Lemma \ref{lift} implies for every $a \in \widetilde{\A}_{H}(k)$

\[ \#\widetilde{\Mc}^\beta_{H,a}(k) =q^{\dim \widetilde{\Mc}^\beta_{H}} \int_{\widetilde{M}^\beta_{H}(\Oc)^{lift}_a }|\omega_{\beta}| =q^{\dim \widetilde{\Mc}^\tau_{H}}\int_{\widetilde{M}_{H}(\Oc)^{lift}_a} |\omega_{H}| =\#\widetilde{\Mc}^{\tau}_{H,a}(k),\]

In order to be able to compare the stable point counts we repeat the above argument for a given $a \in \widetilde{\A}_{H}(k)$ with the unramified twists $\widetilde{\Mc}_{H,a}^{\beta,t}$ and $\widetilde{\Mc}_{H,a}^{\tau+t}$ as in \cite[Section 6.2]{GWZ20b}, for $t \in  H^1_{\text{\'et}}(k,\pi_0(\Pc_{H,a}))\cong \pi_0(\Pc_{H,a})(k) $, to obtain
\[\#\widetilde{\Mc}_{H,a}^{\beta,t}(k) = \#\widetilde{\Mc}_{H,a}^{t+\tau}(k).\]
By \cite[Lemma 6.6]{GWZ20b} we then finally obtain

\begin{align*}\#^{stab} \widetilde{\Mc}^\beta_{H,a}(k) &=\frac{1}{|\pi_0(\Pc_{H,a})(k)|} \sum_{t \in \pi_0(\Pc_{H,a})(k)} \#\widetilde{\Mc}_{H,a}^{\beta,t}(k)\\ &=\frac{1}{|\pi_0(\Pc_{H,a})(k)|} \sum_{t \in \pi_0(\Pc_{H,a})(k)} \#\widetilde{\Mc}_{H,a}^{t+\tau}(k) =   \#^{stab} \widetilde{\Mc}_{H_\kappa,a}(k).\end{align*}

\subsection{Proof via an analogue of Ngô's product formula}\label{sect:product formula}

\begin{definition}
    Let $J/(\mathfrak{c}_h)_D$ be the regular centralizer. Fix $a\in \mathsf{A}_H^{\ani}(k)$.
    \begin{enumerate}
        \item There is a natural morphism $Z(H)\times_X (\mathfrak{c}_h)_D\to J$ of group schemes over $(\mathfrak{c}_h)_D$ that is compatible with their embedding into $G$. 
        Let $\beta_J$ be the $J$-gerbe $B_{(\mathfrak{c}_h)_D}J\times^{B_{(\mathfrak{c}_h)_D}Z(H)} \beta$ on $(\mathfrak{c}_h)_D$. 
        We can identify $[h_{D}/H]^\beta$ with $[h_{D}/H]\times^{B_{(\mathfrak{c}_h)_D}J} \beta_J$. The $B_{(\mathfrak{c}_h)_D}J$-action on $[h_{D}/H]$ defines a $B_{(\mathfrak{c}_h)_D}J$-action on $[h_{D}/H]^\beta$. 

        \item Let $J_a$ be the commutative group scheme $a^*J$ on $X$, and let $h_{D,a}=\chi^{-1}(a)\subset h_D$. 
        The fiber of $\mathbb{M}_{H}^\beta\to \mathsf{A}_H$ at $a$, denoted by $\mathbb{M}_{H,a}^\beta$, is identified with the moduli stack of sections of $[h_{D,a}/H]^\beta\to X$. The fiber of the Prym stack $\mathbb{P}_{H}\to \mathsf{A}_H$ at $a$, denoted $\mathbb{P}_{H,a}$, is identified with the moduli stack of $J_a$-torsors on $X$.
        
        \item Let $\beta_a$ be the $J_a$-gerbe on $X$ given by $a^*\beta_J\cong B_XJ_a\times^{B_XZ(H)} \beta$. 
        Then $[h_{D,a}/H]^\beta\cong [h_{D,a}/H]\times^{B_{X}J_a} \beta_a$.

        \item 
        For any closed point $\nu \in X$ we denote by $X_\nu$ the formal disc around $\nu$ and let $\mathbb{P}_{H, a,\nu}$ be the Picard stack of $J_{a,\nu}$-torsors on the formal disk $X_\nu$ (denoted $\mathcal{P}_{a,\nu}$ in \cite[p. 20]{MR2218781}). Let $\mathbb{M}_{H,a,\nu}^\beta$ be the twisted variant of the stack $\mathcal{M}_{a,\nu}$ introduced in \cite[p. 20]{MR2218781}. More precisely, $\mathbb{M}_{H,a,\nu}^\beta$ associates a $k$-scheme $S$ to the groupoid of maps $X_\nu\hat{\times}_k S\to [h_D/H]^\beta$. The $B_XJ_{a,\nu}$-action on $[h_{D}/H]^\beta$ induces a $\mathbb{P}_{H, a,\nu}$-action on $\mathbb{M}_{H,a,\nu}^\beta$. 
    \end{enumerate}
\end{definition}

\begin{proposition}\label{prop:productformula}
    Let $a\in \mathsf{A}_H^{\ani}(k)$, $U=a^{-1}(\mathfrak{c}_D^{\mathrm{rs}})$. 
    Then we have a $\Gal(\bar k/k)$-equivariant equivalence of categories
		\[[\mathbb{M}_{H,a}^\beta/\mathbb{P}_{H,a}](\bar k)\cong \prod_{\nu\in X-U}[\mathbb{M}_{H,a,\nu}^\beta/\mathbb{P}_{H, a,\nu}](\bar k),\]
	In particular, we have an equivalence between categories of $k$-points
		\[[\mathbb{M}_{H,a}^\beta/\mathbb{P}_{H,a}](k)\cong
		\prod_{\nu\in |X-U|}[\mathbb{M}_{H,a,\nu}^\beta/\mathbb{P}_{H, a,\nu}](k).\]
\end{proposition}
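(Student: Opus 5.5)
We follow Ngô's proof of the product formula \cite[Proposition 4.15.1]{Ng10}, working throughout with the description $[h_{D,a}/H]^\beta\cong [h_{D,a}/H]\times^{B_XJ_a}\beta_a$ from the Definition above. The key input is that over the regular semisimple locus $U$ the twisted stack $[h_{D,a}/H]^\beta|_U$ is simply the $J_a$-gerbe $\beta_a|_U$. Indeed, untwisted $[h^{\reg}_{D,a}/H]$ is a $J_a$-gerbe (the trivial one, since it has the Kostant section), and on $U$ every element is regular. Twisting by $\beta_a$ therefore gives the gerbe $\beta_a|_U$. The same holds at the level of $[h^{\reg}_{D,a}/H]^\beta$ on all of $X$; this is the input already used in the proof of Proposition \ref{heart}(iv).

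\textbf{Step 1: reduction to $\bar k$.} Work over $\bar k$ first. The $k$-point statement then follows from Galois equivariance: the functor will be built canonically, and the product over geometric points of $X-U$ regroups into a product over closed points $\nu\in|X-U|$, with $\mathbb{M}^\beta_{H,a,\nu}$ defined via restriction of scalars.

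\textbf{Step 2: the restriction functor.} Define the functor by restricting a section $X\to[h_{D,a}/H]^\beta$ to each formal disc $X_\nu$. This is compatible with the $\mathbb{P}$-actions, so it descends to the quotient stacks.

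\textbf{Step 3: essential surjectivity via Beauville--Laszlo gluing.} Given local sections $m_\nu$ over the $X_\nu$, we must glue them to a global section.
\begin{enumerate}
\item Restrict each $m_\nu$ to the punctured disc $X_\nu^\times$. There it becomes a section of the $J_a$-gerbe $\beta_a$.
\item Choose a section $s_U$ of $\beta_a$ over $U$. This exists because $\beta_a|_{U_{\bar k}}$ is trivial: $H^2(U_{\bar k},J_a)=0$ by the argument of Lemma \ref{torus-curve-trivial}, as $U$ is an affine curve and $J_a$ is generically a torus.
\item On each $X_\nu^\times$, the sections $m_\nu|_{X_\nu^\times}$ and $s_U|_{X_\nu^\times}$ differ by a $J_a$-torsor over $X_\nu^\times$. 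That torsor is trivial, since $F_\nu$ has cohomological dimension $\le1$ and Steinberg's theorem applies to $J_a$ over $F_\nu$, which is a torus.
\item Choose trivialisations of these torsors and glue $s_U$ with the $m_\nu$ along the $X_\nu^\times$ by Beauville--Laszlo descent for maps to the algebraic stack $[h_D/H]^\beta$. Gluing of gerbe sections is fine because the stack has affine diagonal over $X$.
\end{enumerate}
The different choices of $s_U$ and of the trivialisations are absorbed by the action of $\mathbb{P}_{H,a}$: changing them amounts to modifying by a $J_a$-torsor glued from the trivial torsor on $U$ and trivial torsors on the discs, and every $J_a$-torsor on $X$ arises this way. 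This gives essential surjectivity.

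\textbf{Step 4: full faithfulness.} This is the quotient-stack bookkeeping of Ngô's argument. It uses the exact sequence expressing $\mathbb{P}_{H,a}$ as the double quotient $J_a(U)\backslash\prod_\nu J_a(F_\nu)/\prod_\nu J_a(\Oc_\nu)$, together with the fact that automorphisms of sections over $U$ are exactly $J_a(U)$ because $\beta_a|_U$ is a $J_a$-gerbe. This sequence is unchanged by the twist, since it only involves $J_a$; hence Ngô's argument carries over verbatim.

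\textbf{Main obstacle.} The delicate point is Step 3: making Beauville--Laszlo gluing rigorous for sections of the twisted stack $[h_D/H]^\beta$ rather than of $[h_D/H]$. I would handle this by working étale-locally where $\beta$ is trivial, or alternatively by using Lemma \ref{TwPushout} to write sections as pairs consisting of an $H$-torsor-type object and a trivialisation of $\beta$, and then gluing each component separately.
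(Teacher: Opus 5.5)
Your Step 3 construction of a global object from local data is fine. The bookkeeping you rely on in Steps 3 and 4, however, rests on a false claim.

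\textbf{The false claim.} The double quotient $J_a(U)\backslash\prod_\nu J_a(F_\nu)/\prod_\nu J_a(\mathcal{O}_\nu)$ is not $\mathbb{P}_{H,a}(\bar k)$. It classifies only the $J_a$-torsors on $X_{\bar k}$ that are trivial over $U_{\bar k}$, i.e.\ it is the kernel of the restriction map from isomorphism classes of $J_a$-torsors on $X_{\bar k}$ to $H^1(U_{\bar k},J_a)$. By your own Step 3(iii) this restriction map is surjective. Moreover $H^1(U_{\bar k},J_a)$ is in general nonzero: for $\mathrm{GL}_n$ it is the Picard group of the affine curve $\pi^{-1}(U)$ in the spectral curve. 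So it is false that every $J_a$-torsor on $X$ is glued from the trivial torsor on $U$ and trivial torsors on the discs.

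\textbf{Where it matters.} In Step 3 this only spoils the justification. Changing $s_U$ to $s'_U$ modifies the glued object by an extension to $X$ of $\mathrm{Isom}(s_U,s'_U)$, which still lies in $\mathbb{P}_{H,a}$. In Step 4 it is essential. Take $m,m'\in\mathbb{M}^\beta_{H,a}(\bar k)$ and local morphisms $(\mathcal{L}_\nu,\phi_\nu\colon\mathcal{L}_\nu\cdot m_\nu\cong m'_\nu)$. The global morphism inducing them has underlying torsor obtained by gluing $\mathrm{Isom}(m|_U,m'|_U)$, an arbitrary $J_a$-torsor on $U$, to the $\mathcal{L}_\nu$ along the $X_\nu^\times$. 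Your description of $\mathbb{P}_{H,a}$ does not see these morphisms, so the argument as stated does not prove fullness.

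\textbf{The fix.} What Ngô actually uses is the Beauville--Laszlo description of both $\mathbb{M}^\beta_{H,a}$ and $\mathbb{P}_{H,a}$. Each is a fibre product of data over $U$ and over the discs $X_\nu$, taken over data on the punctured discs. Sections over $U$ and over $X_\nu^\times$ are torsors under the Picard groupoids of $J_a$-torsors on $U$ and on $X_\nu^\times$ respectively. Given this, the quotient formula follows formally.

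\textbf{The paper's shortcut.} None of this is needed, and the paper avoids it. Lemma~\ref{torus-curve-trivial}, which you apply to $U$, applies equally to $X$ and gives $H^2_{\et}(X_{\bar k},J_a)=0$. Hence $\beta_a$ is neutral on $X_{\bar k}$, and so after a finite extension of $k$. A neutralisation $\beta_a\simeq B_XJ_a$ then identifies $[h_{D,a}/H]^\beta\cong[h_{D,a}/H]\times^{B_XJ_a}\beta_a$ with $[h_{D,a}/H]$. This identification is compatible with the $B_XJ_a$-actions and with restriction to the discs. The restriction functor therefore becomes Ngô's, and \cite[4.6]{MR2218781} gives the equivalence on $\bar k$-points directly. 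In particular, the Beauville--Laszlo gluing for the twisted stack, which you flag as the main obstacle, never has to be addressed.
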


\begin{proof}
    The restriction defines a $k$-morphism
		\[\gamma: [\mathbb{M}_{H,a}^\beta/\mathbb{P}_{H,a}]\to \prod_{\nu\in |X-U|}[\mathbb{M}_{H,a,\nu}^\beta/\mathbb{P}_{H, a,\nu}].\]
		It remains to check that this map induces an isomorphism on $\bar{k}$-points. Note that $J_a$ is a torus on the generic fiber. By Lemma \ref{torus-curve-trivial}, the group $H^2_{\et}(X_{\bar k},J_a)$ is trivial. In particular, the gerbe $\beta_{a,\bar k}$ is trivial. After a base change of $k$ to a finite extension, we may assume $\beta_a$ is trivial, so we may identify $[h_{D,a}/H]^\beta$ with $[h_{D,a}/H]$. 
        Then the claim follows from \cite[4.6]{MR2218781}.
\end{proof}

\begin{proposition}\label{comp-stab-prod}
    For any  $a\in \tilde{\mathsf{A}}^{\ani}_H(k)$, we have an isomorphism between $[\mathbb{M}_{H,a}^\beta/\mathbb{P}_{H,a}](k)$ and $[\mathbb{M}_{H,a}/\mathbb{P}_{H,a}](k)$. 
\end{proposition}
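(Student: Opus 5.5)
The plan is to compare both sides through the product formula. Applying Proposition \ref{prop:productformula} once to $\beta$ and once to the trivial gerbe gives
\[[\mathbb{M}_{H,a}^\beta/\mathbb{P}_{H,a}](k)\cong \prod_{\nu\in |X-U|}[\mathbb{M}_{H,a,\nu}^\beta/\mathbb{P}_{H,a,\nu}](k), \qquad [\mathbb{M}_{H,a}/\mathbb{P}_{H,a}](k)\cong \prod_{\nu\in |X-U|}[\mathbb{M}_{H,a,\nu}/\mathbb{P}_{H,a,\nu}](k).\]
It therefore suffices to produce, for each closed point $\nu\in|X-U|$, an equivalence
\[[\mathbb{M}_{H,a,\nu}^\beta/\mathbb{P}_{H,a,\nu}](k)\cong[\mathbb{M}_{H,a,\nu}/\mathbb{P}_{H,a,\nu}](k).\]
Taking the product over $\nu$ then gives the claim. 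The point $a\in\tilde{\A}^{\ani}_H(k)$ maps to $\A^{\ani}_H(k)$, so the proposition applies to its image.

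For the local statement I would use that $[h_{D}/H]^\beta\cong [h_D/H]\times^{B J}\beta_J$, so that on the formal disc $X_\nu$ the local twisted stack is
\[\mathbb{M}^\beta_{H,a,\nu}=\Sect\bigl(X_\nu,[h_{D,a}/H]\times^{B_XJ_a}\beta_a\bigr).\]
The key input is that $\beta_{a}|_{X_\nu}$ is trivial as a $J_{a,\nu}$-gerbe over $X_\nu=\Spec\Oc_\nu$, where $\Oc_\nu$ is a complete DVR with finite residue field $k_\nu$. I would argue this as follows. By Hensel/Artin approximation (or the same deformation argument as in Lemma \ref{vanishing-H2}, using that $J_{a,\nu}$ is smooth), $H^2(\Oc_\nu,J_{a,\nu})\cong H^2(k_\nu, J_{a,\nu}\otimes k_\nu)$. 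This last group vanishes because $k_\nu$ is finite, and Galois cohomology of a finite field has cohomological dimension $1$ for smooth connected groups, with $H^2$ of finite groups also vanishing since $\hat{\mathbb Z}$ has cohomological dimension $1$.

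A trivialization $\beta_a|_{X_\nu}\cong B J_{a,\nu}$ then identifies $[h_{D,a}/H]^\beta|_{X_\nu}$ with $[h_{D,a}/H]|_{X_\nu}$, compatibly with the $BJ_{a,\nu}$-action. This gives a $\mathbb{P}_{H,a,\nu}$-equivariant isomorphism $\mathbb{M}^\beta_{H,a,\nu}\cong\mathbb{M}_{H,a,\nu}$ defined over $k_\nu$, and hence an equivalence of quotient groupoids on $k$-points. The $k$-points of the $\nu$-factor are the $k_\nu$-points, since the factor is a Weil restriction.

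The trivialization is not canonical: two choices differ by a $J_{a,\nu}$-torsor on $X_\nu$, which acts through $\mathbb{P}_{H,a,\nu}$. This is exactly why the statement is formulated for the quotient $[\mathbb{M}/\mathbb{P}]$ rather than for $\mathbb{M}$ itself. The main obstacle is making the vanishing of $H^2(X_\nu,J_{a,\nu})$ rigorous, in particular the reduction from $\Oc_\nu$ to the residue field when $J_{a,\nu}$ is not connected. There I would split along $J^0\to J\to\pi_0(J)$ and handle each piece separately. Should this fail, I would instead work over $\bar k$, where Lemma \ref{torus-curve-trivial} already trivializes $\beta_a$ globally. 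The task would then be to check that the resulting equivalence is Galois-equivariant up to the $\mathbb{P}$-action, so that it descends to $k$-points of the quotients.
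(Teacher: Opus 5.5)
Your proposal is correct and follows the paper's proof. You apply Proposition \ref{prop:productformula} to both $\beta$ and the trivial gerbe, show that each $\beta_a|_{X_\nu}$ is neutral by reducing to the residue field $k(\nu)$, where $H^2$ vanishes, and then identify the local quotients $[\mathbb{M}^\beta_{H,a,\nu}/\mathbb{P}_{H,a,\nu}]\cong[\mathbb{M}_{H,a,\nu}/\mathbb{P}_{H,a,\nu}]$. The differences are minor: the paper lifts a section from $\nu$ to $X_\nu$ via \cite[2.1.4]{Bouthier2022} instead of a Hensel-type comparison of $H^2$. It also avoids your splitting along $J^0\to J\to\pi_0(J)$ by noting that $J_a(\overline{k(\nu)})$ is a torsion group, so $H^2(\hat{\BZ},J_a(\overline{k(\nu)}))=0$ directly.
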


\begin{proof}
    For any $\nu\in |X-U|$,  the fact that $k(\nu)$ is a finite field implies that $J_a(\overline{k(\nu)})$ is a torsion group. Consequently, the group $H^2_{\et}(\nu, J_a)\cong H^2(\hat{\BZ}, J_a(\overline{k(\nu)}))$ is trivial. It follows that $\beta_a(\nu)$ is non-empty. By \cite[2.1.4]{Bouthier2022}, the pullback map $\beta_a(X_\nu)\to \beta_a(\nu)$ is essentially surjective. In particular, $\beta_a(X_\nu)$ is non-empty, so the $J_{a,\nu}$-gerbe $\beta_a|_{X_\nu}$ is neutral. Consequently, there is an isomorphism $[\mathbb{M}_{H,a,\nu}^\beta/\mathbb{P}_{H, a,\nu}]\cong [\mathbb{M}_{H,a,\nu}/\mathbb{P}_{H, a,\nu}]$. 
    The claim then follows from Proposition \ref{prop:productformula}. 
\end{proof}

By a variant of Grothendieck–Lefschetz trace formula \cite[11.1.7]{wang2024multiplicative} we have
    \[\#^{stab}\mathbb{M}_{H,a}^\beta(k)=\sum_n (-1)^n \Tr(\mathrm{Frob}, H^n_{\et,c}(\mathbb{M}_{H,a}^\beta,\overline{\mathbb{Q}}_\ell)^1)=\# \mathbb{P}_{H,a}^0(k)\# [\mathbb{M}_{H,a}^\beta/\mathbb{P}_{H,a}](k),\]
    \[\#^{stab}\mathbb{M}_{H,a}(k)=\sum_n (-1)^n \Tr(\mathrm{Frob}, H^n_{\et,c}(\mathbb{M}_{H,a},\overline{\mathbb{Q}}_\ell)^1)=\# \mathbb{P}_{H,a}^0(k)\# [\mathbb{M}_{H,a}/\mathbb{P}_{H,a}](k).\]
    Here $H^n_{\et,c}(\mathbb{M}_{H,a},\overline{\mathbb{Q}}_\ell)^1$ denotes the $\pi_0(\mathbb{P}_{H,a})$-invariant subspace of $H^n_{\et,c}(\mathbb{M}_{H,a},\overline{\mathbb{Q}}_\ell)$, and $\mathbb{P}_{H,a}^0$ denotes the neutral component of $\mathbb{P}_{H,a}$. 
    We conclude that \[\#^{stab}\mathbb{M}_{H,a}^\beta(k)=\#^{stab}\mathbb{M}_{H,a}(k).\]

\begin{rmk}
    Let $X_\nu^\bullet\subset X_\nu$ be the punctured formal disk. 
    Once we have chosen $E^*\in \mathbb{M}_{H,a,\nu}^{\beta}(k)$, as in \cite[p. 21]{MR2218781}, we can define the $\beta$-twisted Springer fibre  $\mathbb{M}_{H,a,\nu}^{\beta,\bullet}$. It represents the functor that assigns to a $k$-scheme $S$ the groupoid of pairs $(E_\nu,\iota_\nu)$, where $E_\nu:  X_\nu\hat{\times}_k S\to [h_D/H]^\beta$ and $\iota_\nu$ is an isomorphism $E_\nu|_{X_\nu^\bullet\hat{\times}_k S}\to E^*|_{X_\nu^\bullet\hat{\times}_k S}$. 
    Let $\mathbb{P}_{H,a,\nu}^{\bullet}$ be the Picard scheme of $J_{a,\nu}$-torsors on the formal disk $X_\nu$ with a trivialization over  $X_\nu^\bullet$.
    Due to the triviality of $H^2_{\et}(X_\nu, J_a)$ as explained in the proof of Proposition \ref{comp-stab-prod}, $\mathbb{M}_{H,a,\nu}^{\beta,\bullet}$ is (non-canonically) isomorphic to the usual Springer fibre $\mathbb{M}_{H,a,\nu}^{\bullet}$.
    
    As a corollary, $\mathbb{M}_{H,a,\nu}^{\beta,\bullet}$ is represented by an ind-scheme, equipped with an action by the group ind-scheme $\mathbb{P}_{H,a,\nu}^{\bullet}$. Moreover, \cite[4.6]{MR2218781} implies that the forgetful functor yields an isomorphism $$[\mathbb{M}_{H,a,\nu}^{\beta,\bullet}/\mathbb{P}_{H,a,\nu}^{\bullet}]\cong [\mathbb{M}_{H,a,\nu}^{\beta}/\mathbb{P}_{H,a,\nu}].$$
\end{rmk}

\bibliographystyle{amsalpha}
\bibliography{master}
\end{document}